\DeclareFontFamily{U}{matha}{\hyphenchar\font45}
\DeclareFontShape{U}{matha}{m}{n}{
<-6> matha5 <6-7> matha6 <7-8> matha7
<8-9> matha8 <9-10> matha9
<10-12> matha10 <12-> matha12
}{}
\DeclareSymbolFont{matha}{U}{matha}{m}{n}
\DeclareFontFamily{U}{mathx}{\hyphenchar\font45}
\DeclareFontShape{U}{mathx}{m}{n}{
<-6> mathx5 <6-7> mathx6 <7-8> mathx7
<8-9> mathx8 <9-10> mathx9
<10-12> mathx10 <12-> mathx12
}{}
\DeclareSymbolFont{mathx}{U}{mathx}{m}{n}
\DeclareMathDelimiter{\vvvert}{0}{matha}{"7E}{mathx}{"17}%
\DeclarePairedDelimiterX{\normiii}[1]{\vvvert}{\vvvert}{\ifblank{#1}{\:\cdot\:}{#1}}
\newtheorem{theorem}{Theorem}[section]
\newtheorem{lemma}[theorem]{Lemma}
\newtheorem{proposition}[theorem]{Proposition}
\newtheorem{remark}[theorem]{Remark}
\newcommand{\R}{\mathbb{R}}
\newcommand{\Abs}[1]{\left|#1\right|}
\newcommand{\inner}[2]{\langle{#1},{#2}\rangle}
\newcommand{\Inner}[2]{\left\langle{#1},{#2}\right\rangle}
\newcommand{\norm}[1]{\|#1\|}
\newcommand{\Norm}[1]{\left\|#1\right\|}
\newcommand{\tx}{\tilde x}
\newcommand{\tz}{\tilde z}
\newcommand{\bi}{\begin{itemize}}
\newcommand{\ei}{\end{itemize}}
\newcommand{\ba}{\begin{array}}
\newcommand{\ea}{\end{array}}
\begin{document}

\title{An inexact version of the symmetric proximal  ADMM for solving separable convex optimization}

\author{Vando A. Adona \thanks{IME, Universidade  Federal de Goias, Campus II- Caixa Postal 131, CEP 74001-970, Goi\^ania-GO, Brazil. (E-mails: {\tt vandoadona@gmail.com} and {\tt maxlng@ufg.br}).  The work of these authors was supported in part by FAPEG/GO Grant PRONEM-201710267000532, and CNPq Grants  302666/2017-6 and 408123/2018-4.} \and Max L.N. Gon\c calves \footnotemark[1] }


\maketitle

\begin{abstract}
In this paper, we propose and analyze  an  inexact version of the symmetric proximal  alternating direction method of
multipliers (ADMM) for solving  linearly constrained optimization problems.  Basically,  the   method allows its first subproblem to be solved inexactly in such way that a relative approximate criterion is satisfied. In terms of  the iteration number $k$,   we establish   global  $\mathcal{O} (1/ \sqrt{k})$  pointwise  and $\mathcal{O} (1/ {k})$ ergodic  convergence rates of the method for a  domain of the acceleration parameters, which is consistent   with the largest known  one in the exact case.  Since   the  symmetric proximal ADMM   can be seen as a class of ADMM variants,   the new algorithm  as well as its convergence rates generalize, in particular,  many others in the literature. 
Numerical experiments illustrating the practical  advantages  of the  method are reported. 
To the best of our knowledge, this work is the first one to study an   inexact version of the  symmetric proximal ADMM.
\\
\\   
\textbf{Key words:} Symmetric alternating direction method of multipliers,  convex program, relative error criterion,
 pointwise iteration-complexity, ergodic iteration-complexity.
 \\[2mm]
\textbf{AMS Subject Classification:}
47H05, 49M27, 90C25, 90C60, 
65K10.
 \end{abstract}

%
%
%
%
%
%
%
%
%
%
%
%
%
%
%
%
%
%
%
%
%
%
%
%
%

\pagestyle{plain}

\section{Introduction}\label{sec:int}
Throughout this paper, $\mathbb{R}$, $\mathbb{R}^n$ and $\mathbb{R}^{n\times p}$ denote, respectively, the set of real numbers, the set of $n$ dimensional real column vectors and the set of ${n\times p}$ real matrices.  For any vectores $x, y \in \mathbb{R}^n,$ $\inner{x}{y}$  stands for their inner product and  $\|x\|\coloneqq \sqrt{\inner{x}{y}}$ stands for the Euclidean norm of $x$. 
The  space of
symmetric positive semidefinite (resp. definite) matrices on $\mathbb{R}^{n\times n}$ is denoted by $\mathbb{S}^{n}_{+}$ (resp. $\mathbb{S}^{ n}_{++}$). Each element $Q\in \mathbb{S}^{ n}_{+}$ induces a symmetric bilinear
form $\inner{Q(\cdot)}{\cdot}$ on $\mathbb{R}^n \times \mathbb{R}^n$ and
a  seminorm $\|\cdot\|_{Q}:=\sqrt{\inner{Q(\cdot)}{\cdot}}$ on $\mathbb{R}^n$.  The trace and determinant of a matrix $P$ are denoted by $\mbox{Tr}(P)$ and $\det(P)$, respectively. We  use $I$ and ${\bf 0}$
 to stand, respectively,  for the identity matrix and the zero matrix  with proper dimension throughout the context.

Consider the separable linearly constrained optimization problem   
\begin{equation} \label{optl}
\min \{ f(x) + g(y):  A x + B y =b\},
\end{equation}
where {$f\colon\R^{n} \to (-\infty, \infty]$  
and $g\colon\R^{p} \to (-\infty, \infty]$ are proper, closed and convex functions,} $A\in\R^{m\times n}$, $B\in\R^{m\times p}$, and $b \in\R^{m}$. 
Convex optimization problems with a separable structure such as \eqref{optl}
 appear in many applications areas such as machine learning, compressive sensing and image processing.
The augmented  Lagrangian method (see, e.g., 
\cite{Ber1}) attempts to solve~\eqref{optl} directly without taking into account its particular structure. 
To overcome this drawback, a variant of the augmented Lagrangian method, namely, the alternating direction method of multipliers (ADMM),  was proposed and studied in \cite{0352.65034,0368.65053}. 
The ADMM takes full advantage of the special structure of the problem by considering each variable separably in an
alternating form  and coupling them into 
the Lagrange multiplier updating; for detailed reviews, see  \cite{Boyd:2011,glowinski1984}.

More recently, a symmetric version of the  ADMM  was proposed in  \cite{HeMaYuan2016sym} and since then it has been studied by many authors  (see, for example, \cite{Bai2018sym, Chang2019251, GaoMa2018sym, Shen2020part, Sun2017sym, Wu2019LQP, Wu2017, Wu2018708}). The method with proximal terms added (named here as   symmetric proximal ADMM) is described as follows: let  an initial point $(x_0, y_0,\gamma_0) \in  \R^{n}\times\R^{p}\times \R^{m}$, a penalty parameter $\beta>0$, two acceleration  parameters $\tau$ and $\theta$, and two proximal  matrices  $G\in \mathbb{S}^{ n}_{+}$ and $H\in \mathbb{S}^{ m}_{+}$  be given; for $k=1,2,\ldots$ do
{
\begin{subequations} \label{eq:secpro}
\begin{align}
&x_{k}\in \arg\min_{x \in \R^{n}} \left \{ f(x) - \inner{ {\gamma}_{k-1}}{Ax}
+\frac{\beta}{2} \|Ax+By_{k-1}-b \|^2 +\frac{1}{2} \|x-x_{k-1}\|_G^2\right\},  \label{eq:xprob}\\
&\gamma_{k-\frac{1}{2}} := \gamma_{k-1}- \tau\beta\left(A{x}_{k}+B y_{k-1} - b\right), \\ \label{eq:yprob}
&y_{k}\in\arg\min_{y \in \R^{p}}
  \bigg\{ g(y) - \inner{ {\gamma}_{k+\frac12}}{By} +\frac{\beta}{2} \| Ax_k+By-b \|^2 +\frac{1}{2}\|y- y_{k-1}\|_{H}^2\bigg\},\\
&\gamma_k:= \gamma_{k-1}- \theta\beta\left(A{x}_{k}+B y_{k} - b\right).   
\end{align}
\end{subequations}}
The symmetric proximal ADMM unifies several  ADMM variants. For example, it reduces to: 
\begin{itemize}
\item the standard ADMM when $G={\bf 0}$, $H={\bf 0}$, $\tau=0$ and $\theta=1$;

\item the Fortin and Glowinski acceleration version of the proximal ADMM (for short FG-P-ADMM) when $\tau=0$; see \cite{FORTIN198397,GABAY1983299,MJR2};

\item the  generalized proximal ADMM  (for short G-P-ADMM)  with the relaxation factor $\alpha:=\tau+1$ when $\theta=1$;  see \cite{Adona2018, MR1168183}.   The proof of the  latter fact can be found, for example, in   \cite[Remark 5.8]{HeMaYuan2016sym};

\item the strictly contractive Peaceman--Rachford splitting method  studied in \cite{doi:10.1137/13090849X} when $\tau=\theta \in (0,1)$.  It is worth pointing out that if $\tau=\theta=1$, $G={\bf 0}$ and  $H={\bf 0}$,   the  symmetric proximal ADMM  corresponds to the standard Peaceman-Rachford splitting method applied to the dual of~\eqref{optl}.
\end{itemize}

As has been observed by some authors (see, e.g., \cite{ Adona2018,Bai2018sym,HeMaYuan2016sym}), the use of suitable acceleration parameters $\tau$ and $\theta$ considerably improves the numerical performances of the ADMM-type algorithms. 
We also mention  that  the proximal terms  $ \|x-x_{k-1}\|_G^2/2$  and  $\|y- y_{k-1}\|_{H}^2/2$     in  subproblems   \eqref{eq:xprob} and \eqref{eq:yprob}, respectively, can make them   easier to solve or even to have closed-form solution in some applications; see, e.g.,  \cite{attouch:hal,  PADMM_Eckstein,He2015} for discussion.

In order to ensure the convergence of the symmetric ADMM in \cite{HeMaYuan2016sym}, the   parameters $\tau$ and $\theta$  were considered into the domain
\[
\mathcal{D}:= \left\{\left(\tau,\theta\right)|\; \tau\in(-1,1),\; \theta \in (0,(1+\sqrt{5})/2),  \; \tau+\theta>0,\;  |\tau|<1+\theta-\theta^2\right\}.
\]
Later, in the multi-block symmetric ADMM setting,  the authors in  \cite{Bai2018sym} have extended this convergence domain to 
\begin{equation}\label{def:reg}
\mathcal{K}:= \left\{\left(\tau,\theta\right)|  \; \tau\leq1,\; \tau+\theta>0,\;  1+\tau+\theta-\tau\theta-\tau^2-\theta^2>0\right\},
\end{equation} 
 by using appropriate proximal terms and by assuming that the matrices  associated to the respective multi-block problem 
 have full column rank.
Note that, if $\tau=0$ (resp. $\theta=1$), the convergence domains in the above regions  are equivalent  to the classical condition $\theta \in (0,(1+\sqrt{5})/2)$ (resp. $\tau\in(-1,1)$ or,  in terms of the relaxation factor $\alpha$, $\alpha\in(0,2)$) in the  FG-P-ADMM (resp. G-P-ADMM).  We refer the reader to \cite{Adona2018, MJR2} for some complexity and numerical  results of the G-P-ADMM and  FG-P-ADMM.

It  is well-known that  implementations of the ADMM in some applications may be expensive and difficult due to the necessity to
solve exactly its two subproblems.  For applications in which  one subproblem of the ADMM is significantly more challenging to solve than the other one,  being necessary, therefore, to use iterative methods to approximately solve it,  papers  \cite{adona2018partially} and \cite{adonaCOAP} proposed partially inexact versions of the FG-P-ADMM  and G-P-ADMM, respectively, using  relative error conditions.
Essentially,  the proposed schemes allow an inexact solution $\tilde{x}_k\in \mathbb{R}^n$ with residual  $u_k\in \mathbb{R}^n$ of subproblem \eqref{eq:xprob} with $G=I/\beta$, i.e.,  
\begin{equation}\label{cond:inex34}
u_k \in \partial f(\tilde x_k) - A^*\tilde{\gamma}_{k}, 
\end{equation}
such that the relative error condition 
\begin{equation}\label{cond:inex43}
 \|\tilde x_k-x_{k-1}+\beta u_k\|^2\leq\tilde \sigma\| \tilde{\gamma}_{k}-{\gamma}_{k-1}\|^2+\hat \sigma\|\tilde x_k-x_{k-1}\|^2,
\end{equation}
is satisfied, where 
\[
\tilde{\gamma}_{k}:={\gamma}_{k-1}-\beta(A\tilde x_k +B y_{k-1} - b), \quad x_k := x_{k-1}-\beta u_k,
\]
and $\tilde \sigma$ and $\hat \sigma$ are two error 
tolerance parameters.  Recall that the 
{$\varepsilon$-subdifferential} of a  convex function $h:\R^n\to \R$
is defined by
\[
\partial_{\varepsilon}h(x):=\{u\in \R^n \,:\,h(\tilde{x})\geq h(x)+\inner{u}{\tilde{x}-x}-\varepsilon,\;\;\forall \,\tilde{x}\in \R^n\}, \quad\forall\, x\in \R^n.
\]
When $\varepsilon=0$, then $\partial_0 h(x)$ 
is denoted by $\partial h(x)$
and is called the {subdifferential} of $f$ at $x$. Note that the inclusion in \eqref{cond:inex34} is based on the first-order optimality condition for \eqref{eq:xprob}.
For the inexact FG-P-ADMM  in \cite{adona2018partially},  the domain of the acceleration factor $\theta$  was
\begin{equation}\label{cond:theta}
\theta\in \left(0,\frac{1-2\tilde \sigma+\sqrt{(1-2\tilde \sigma)^2+4(1-\tilde \sigma)}}{2(1-\tilde \sigma)}\right),
\end{equation}
whereas, for  the inexact  G-P-ADMM in \cite{adonaCOAP}, the domain of the  acceleration  factor $\tau$ was 
\begin{equation}\label{eq:alpha}
\tau\in(-1,1-\tilde\sigma) \; \mbox{ (or,  in term of the relaxation factor $\alpha,$ $ \alpha\in(0,2-\tilde\sigma)$)}.
\end{equation}
If $\tilde \sigma=0$, then   \eqref{cond:theta} and \eqref{eq:alpha} reduce, respectively, to the standard conditions $\theta \in (0,(1+\sqrt{5})/2)$ and $\tau\in(-1,1)$.  
Other inexact ADMMs with relative and/or absolute error condition were proposed in \cite{jeffCOAP, Eckstein2017App, Eckstein2017Relat, NgWangYuan, Xie2017}. It is worth pointing out that, as observed in \cite{Eckstein2017App}, approximation criteria based on  relative error are more interesting  from a computational viewpoint than those based on absolute error.

Therefore, the main goal of  this work is to present   an  inexact version of the  symmetric proximal ADMM \eqref{eq:secpro}  in which, similarly to \cite{adona2018partially,adonaCOAP}, the solution of the  first subproblem can be computed in an approximate way such that a relative error condition is satisfied.  
From the theoretical viewpoint,   the global  $\mathcal{O} (1/ \sqrt{k})$  pointwise    convergence rate is shown, which  ensures, in particular, that for a given tolerance $\rho>0$,   the algorithm generates a  $\rho-$approximate solution $(\tilde x,y,\tilde{\gamma})$  with  {residual}  
$(u,v,w)$ of the Lagrangian system associated to \eqref{optl}, i.e.,
\[ u\in\partial f(\tilde x)-A^{\ast}\tilde{\gamma}, \qquad v\in \partial g(y)-B^{\ast}\tilde{\gamma},\qquad
w= A\tilde{x}+By-b, \]
and
\[ \max\{\|u\|, \|v\|, \|w\|\}\leq \rho,\]
 in at most  $\mathcal{O}(1/\rho^2)$ iterations.
The  global $\mathcal{O} (1/ {k})$ ergodic  convergence rate is also established, which implies, in particular, 
 that  a  $\rho-$approximate solution $(\tilde x^a,y^a,\tilde{\gamma}^a)$ with  {residuals}  
$(u^a,v^a,w^a)$ and $(\varepsilon^a,\zeta^a)$ of the Lagrangian system associated to \eqref{optl}, i.e., 
\[u^a\in \partial_{\varepsilon^a}f(\tilde{x}^a)- A^*\tilde{\gamma}^a,\quad v^a\in \partial_{{\zeta^{a}}}g(y^a)- B^*\tilde{\gamma}^a,  \quad w^a=A\tilde{x}^a+By^a-b,
\] 
and 
\[\max \{\|u^a\|,\|v^a\|,\|w^a\|,\varepsilon^a,{\zeta^{a}} \}\leq \rho,\]
is  obtained  in at most $\mathcal{O}(1/\rho)$ iterations by means of the ergodic sequence. 
The   analysis  of the method is established without any assumptions on $A$ and $B$. Moreover,  the new convergence domain of  $\tau$ and $\theta$    reduces to \eqref{def:reg},  except for the case $\tau=1$ and $\theta\in (-1,1)$,  in the exact setting (see  Remark~\ref{remarkalg}(a)). 
 From  the applicability viewpoint, we report numerical experiments in order to illustrate  the efficiency of the  method for solving   real-life applications. To the best of our knowledge, this work is the first one to study an   inexact version of the  symmetric proximal ADMM.

The paper is organized as follows. Section~\ref{subsec:Admm1} presents  the  inexact  symmetric  proximal  ADMM as well as its 
pointwise and ergodic convergence rates.  Section \ref{sec:Numer} is devoted to the numerical study of the proposed method.  Some concluding remarks are  given in Section~\ref{conclusion}.

\section{Inexact  symmetric proximal  ADMM}\label{subsec:Admm1}

This section describes and investigates an  inexact version of the  symmetric proximal ADMM for solving \eqref{optl}.
Essentially,  the method allows its first subproblem to be solved inexactly in such way that a relative error condition is satisfied. 
 In particular, the new algorithm  as well as its iteration-complexity results  generalize many others in the literature.
 
We begin by formally stating the inexact algorithm.

\newpage
\begin{algorithm}[h!]
\caption{\textbf{An  inexact symmetric proximal  ADMM}} \label{alg:in:sy}
\SetKwInOut{Input}{input}\SetKwInOut{Output}{output}
\BlankLine
\textbf{Step 0.} Let an initial point $(x_0,y_0,\gamma_0)\in\mathbb{R}^n\times \mathbb{R}^p\times \mathbb{R}^m$, a penalty parameter $\beta>0$, two error 
tolerance parameters $\tilde{\sigma}, \hat{\sigma} \in [0,1)$, 
and two proximal matrices $G\in \mathbb{S}^{ n}_{++}$ and $H\in\mathbb{S}^{ p}_+$  be given. Choose the acceleration parameters  $\tau$  and  $\theta$ such that 
$(\tau,\theta)\in\mathcal{R}_{\tilde{\sigma}}$
where
\begin{equation}\label{def:Reg}
\mathcal{R}_{\tilde{\sigma}}=
\left\{\left(\tau,\theta\right)\,\Bigg{|}
\begin{array}{c}
\tau\in\left(-1,1-\tilde{\sigma}\right),\qquad\tau+\theta>0, \qquad \text{and}\\\noalign{\medskip}
\left(1-\tau^{2}\right)\left(2-\tau-\theta-\tilde{\sigma}\right)-\left(1-\theta\right)^{2}\left(1-\tau-\tilde{\sigma}\right)>0
\end{array}
\right\},
\end{equation}
and set $k=1$.
\BlankLine
{\bf Step 1.} Compute $(\tilde{x}_k,u_k)\in \mathbb{R}^n\times\mathbb{R}^n$  such that 
\begin{equation}\label{cond:inex} 
u_k \in \partial f(\tilde{x}_k) - A^*\tilde{\gamma}_{k}, 
\end{equation}
and
\begin{equation}\label{cond:inex2}
\Norm{\tilde{x}_{k}-x_{k-1}+ G^{-1}u_{k}}^{2}_{G}\leq \frac{\tilde{\sigma}}{\beta}\Norm{\tilde{\gamma}_{k}-\gamma_{k-1}}^{2}+\hat{\sigma}\Norm{\tilde{x}_{k}-x_{k-1}}^{2}_{G},
\end{equation}
where
\begin{equation} \label{tilde}
\tilde{\gamma}_{k}:={\gamma}_{k-1}-\beta(A\tilde x_k +B y_{k-1} - b).
\end{equation}
\BlankLine
{\bf Step 2.} Set 
\begin{equation}\label{mult_1}
\gamma_{k-\frac{1}{2}} := \gamma_{k-1}- \tau\beta\left(A\tilde{x}_{k}+B y_{k-1} - b\right).
\end{equation} 
\BlankLine
{\bf Step 3.} Compute an optimal solution $y_{k}\in\mathbb{R}^p$ of the subproblem 
\begin{equation} \label{g_sub}
\min_{y \in \mathbb{R}^p} \left \{ g(y) - \inner{ \gamma_{k-\frac{1}{2}}}{By} +\frac{\beta}{2}\Norm{A\tilde{x}_{k}+By - b}^2+\frac{1}{2}\|y- y_{k-1}\|_{H}^2\right\}.
\end{equation}
\BlankLine
{\bf Step 4.} Set 
\begin{equation}\label{mult_2}
x_k := x_{k-1}- G^{-1}u_k, \qquad \gamma_k := \gamma_{k-\frac{1}{2}}- \theta\beta\left(A\tilde{x}_{k}+B y_{k} - b\right),
\end{equation}
and $k\leftarrow k+1$, and go to step 1. 
\BlankLine
\end{algorithm}

We now make some relevant comments of
our approach.

\begin{remark}\label{remarkalg}
\begin{itemize}
\item[(a)] Clearly,  it follows from  the definition  in \eqref{def:Reg} that if $\left(\tau,\theta\right)\in\mathcal{R}_{\tilde{\sigma}}$, then $\theta<2$,  $(1-\tau-\tilde{\sigma})>0$ and $(2-\tau-\theta-\tilde{\sigma})>0$. Moreover, the third condition in \eqref{def:Reg} can be rewritten as  
\begin{equation*}
\left(1+\tau+\theta-\tau\theta-\tau^{2}-\theta^{2}\right)\left(1-\tau\right)+\left(\tau^{2}-2\theta+\theta^{2}\right)\tilde{\sigma}>0.
\end{equation*}
If $\tilde{\sigma}=0$, then $\tau\in(-1,1)$. Hence, it follows from the above inequality that  $\mathcal{R}_{0}$ reduces  to the region $\mathcal{K}$  in~\eqref{def:reg} with $\tau\neq 1$.
The regions $\mathcal{R}_{0}$, $\mathcal{R}_{0.3}$ and $\mathcal{R}_{0.6}$ are illustrated  in Fig.~\ref{fig:reg1}, \ref{fig:reg2}, and \ref{fig:reg3}, respectively.
Note that for some suitable choice of $(\tilde{\sigma},\tau)$,  the stepsize $\theta$ can be even chosen  greater than $(1+\sqrt{5})/2\approx1.618$.

\item[(b)] If the inaccuracy  parameters $\tilde{\sigma}$ and  $\hat{\sigma}$ are zeros,  from \eqref{cond:inex2} and the  first equality in \eqref{mult_2}, we obtain $\tilde{x}_{k}={x}_{k}$ and  $u_{k}=G(x_{k-1}-{x}_{k})$. Hence,  in view of the definition of $\tilde \gamma_k$ in \eqref{tilde} and the inclusion in~\eqref{cond:inex}, it follows that  computing $x_k$ is equivalent to solve  exactly  the  subproblem  in \eqref{eq:xprob}. Therefore, we can conclude that Algorithm~\ref{alg:in:sy} recovers  its exact version. 

\item[(c)] In order to simplify  the  updated formula  of $x_k$ in \eqref{mult_2} and the relative error condition  in \eqref{cond:inex2},  a trivial choice for  the proximal matrix  $G$ would be $I/\beta$. 

\item[(d)] If  $\tau=0$, then $(\tau,\theta)\in\mathcal{R}_{\tilde{\sigma}}$  corresponds to 
\begin{equation} \label{eq:ad34}
\theta\in \left(0,\frac{1-2\tilde{\sigma}+\sqrt{(1-2\tilde{\sigma})^2+4(1-\tilde{\sigma})}}{2(1-\tilde{\sigma})}\right),
\end{equation}
and hence Algorithm~\ref{alg:in:sy} with $G=I/\beta$ reduces to the partially inexact proximal ADMM  studied in \cite{adona2018partially}. Note also that  if $\tilde{\sigma}=0$ (exact case), then  \eqref{eq:ad34}  turns out to be  the classical condition  $\theta \in (0,(1+\sqrt{5})/2)$ for the  FG-P-ADMM; see \cite{MJR2}.

\item[(e)] If  $\theta=1$, then $(\tau,\theta)\in\mathcal{R}_{\tilde{\sigma}}$  corresponds to  $\tau\in(-1,1-\tilde\sigma)$.  By setting $\alpha:=\tau+1$, it is possible to prove (see, e.g., \cite[Remark 5.8]{HeMaYuan2016sym}) that Algorithm~\ref{alg:in:sy} with $G=I/\beta$ reduces to the inexact  generalized proximal ADMM  in \cite{adonaCOAP}.  Furthermore, if $\tilde{\sigma}=0$, then  the condition on $\tau$ becomes  the standard condition $\tau\in(-1,1)$  (or,  in term of the relaxation factor $\alpha$, $\alpha\in(0,2)$)  for the G-P-ADMM; see \cite{Adona2018}.
\end{itemize}
\end{remark}


\begin{figure}[!h]
\centering
\subfigure[$\mathcal{R}_{0}$\label{fig:reg1}]
{\includegraphics[width=0.33\textwidth]{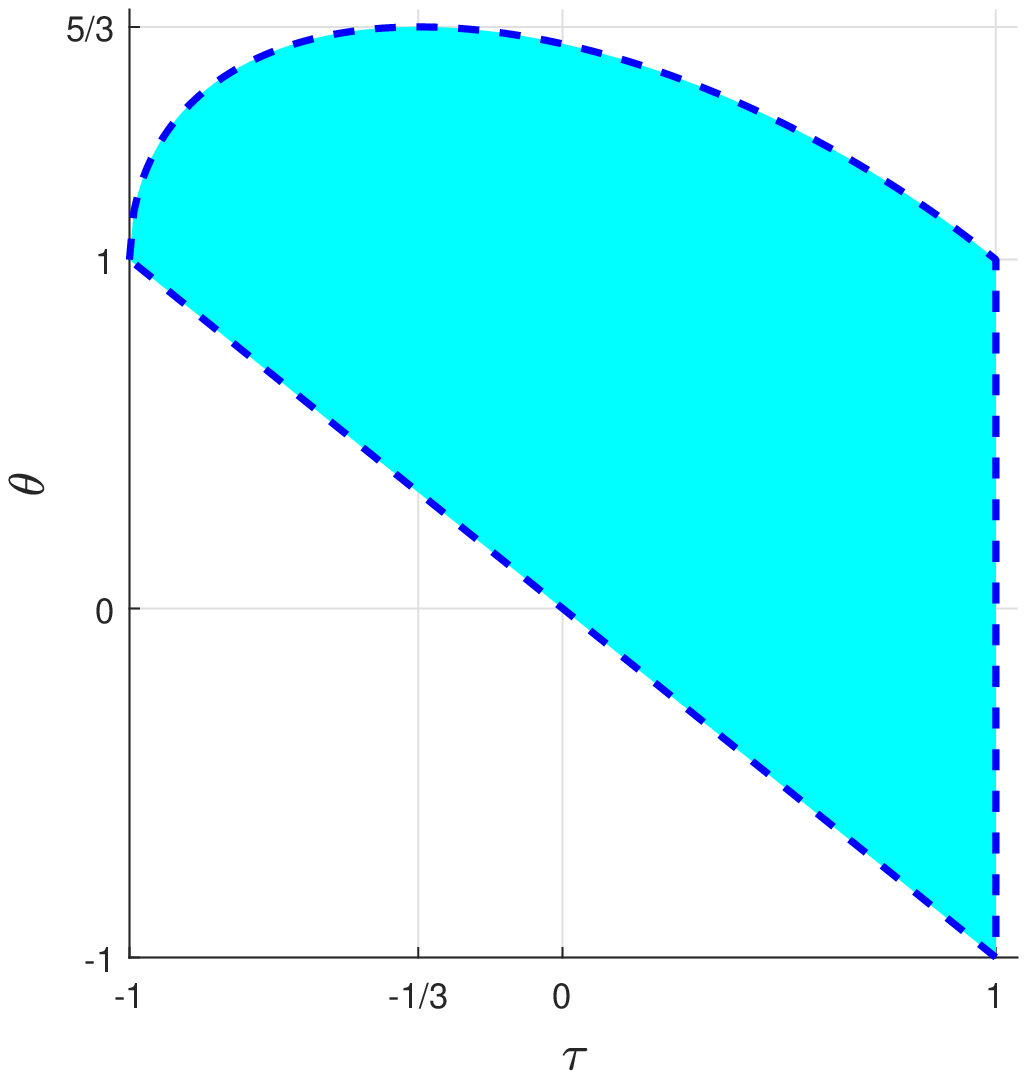}}
\hspace{-0.2cm}\subfigure[$\mathcal{R}_{0.3}$\label{fig:reg2}]
{\includegraphics[width=0.33\textwidth]{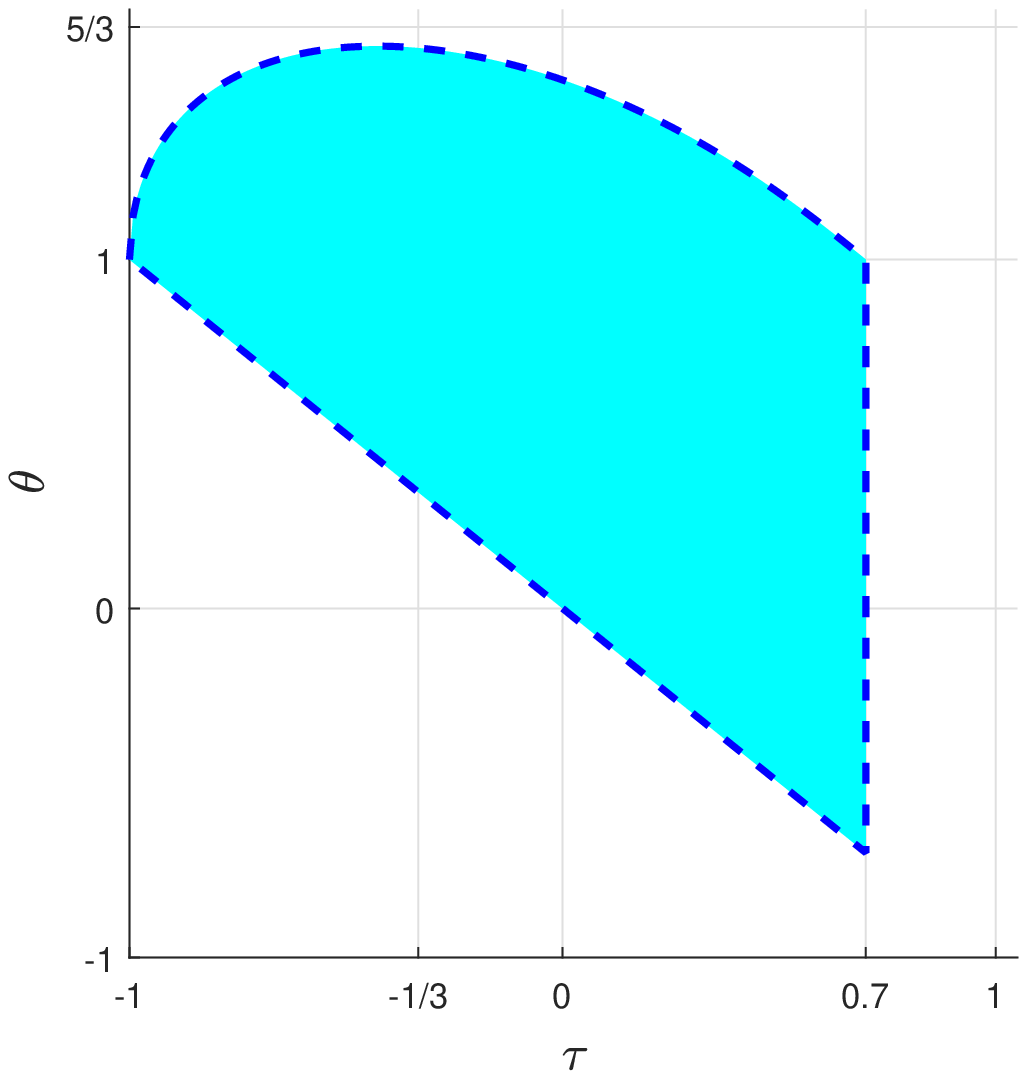}}
\hspace{-0.2cm}\subfigure[$\mathcal{R}_{0.6}$\label{fig:reg3}]
{\includegraphics[width=0.33\textwidth]{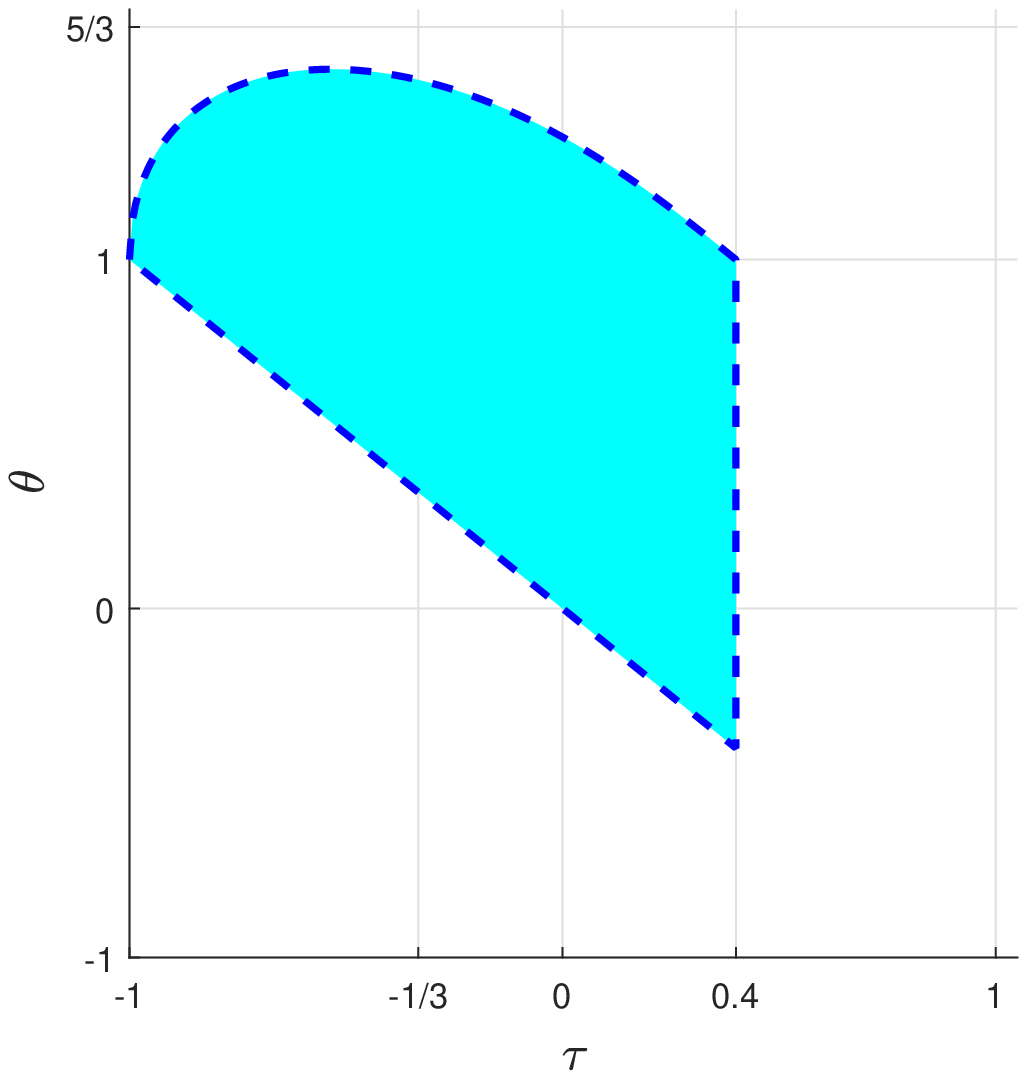}}
\caption{Some instances of $\mathcal{R}_{\tilde{\sigma}}$.}
\label{fig:testfig}
\end{figure}

Throughout the paper, we make the following standard assumption.
\\[2mm]
{\bf Assumption~1.} There exists a solution $(x^{\ast},y^{\ast},\gamma^{\ast})\in\R^n\times\R^p\times\R^m$ of the { Lagrangian system 
 \begin{equation}\label{sist:lag}
0\in\partial f( x)-A^{\ast} \gamma, \qquad 0\in \partial g( y)-B^{\ast}\gamma, \qquad 0=A x+B y-b,
\end{equation}
associated to  \eqref{optl}.}


In order to establish   pointwise and ergodic convergence rates for Algorithm~\ref{alg:in:sy}, we first  show in  Section~\ref{sec:pre2}  that the algorithm  can be seen as  an instance of a general proximal point method.     With this fact in hand,  we will be able to present  convergence rates of Algorithm~\ref{alg:in:sy} in Section~\ref{sec:bound}.    It should be mentioned  that the analysis of 
Algorithm~\ref{alg:in:sy} is much more complicated, since it involves two acceleration parameters $\tau$ and $\theta$.

\subsection{Auxiliar results}\label{sec:pre2}

Our goal in this section is to show that Algorithm~\ref{alg:in:sy} can be seen  as an instance of the hybrid proximal extragradient (HPE) framework  in \cite{MJR2} (see also \cite{Adona2018, adona2018partially}).   More specifically,  it will be proven that there exists a scalar ${\sigma}\in[\hat{\sigma},1)$ such that 
\begin{equation}\label{eq:bn45}
M\left(z_{k-1}-z_{k}\right)\in T(\tz_{k}), \qquad \Norm{\tz_{k}-z_{k}}_{M}^{2}+\eta_{k}\leq \sigma\Norm{\tz_{k}-z_{k-1}}_{M}^{2}+\eta_{k-1}, \quad \forall\, k\geq 1,
\end{equation}
where   $z_{k}:=\left(x_{k},y_{k},\gamma_{k}\right)$ and   $\tz_{k}:=\left(\tx_{k},y_{k},\tilde{\gamma}_{k}\right)$, and  the matrix $M$, the operator $T$  and the sequence $\{\eta_k\}$ will be specified  later. 
As a consequence of the latter fact, the pointwise  convergence rate  to be presented in  the next section could be derived from  \cite[Theorem 3.3]{MJR2}. However, since its proof follows easily from \eqref{eq:bn45},  we present it here  for completeness and convenience of the reader.  
On the other hand,  although   the ergodic convergence rate in the next section  is related to \cite[Theorem 3.4]{MJR2},  its proof does not follow  immediately from the latter theorem.

 The proof of \eqref{eq:bn45} is extensive and  nontrivial. 
We begin by defining  and  establishing  some properties of the matrix $M$ and the operator $T$.
\begin{proposition}\label{def:2oper}
Consider the operator $T$  and the matrix $M$  defined as 
\begin{equation}\label{def:oper}
T(x,y,\gamma)=
\left[
\begin{array}{c} 
\partial f(x)- A^{*}\gamma\\ \partial g(y)- B^{*}\gamma\\ Ax+By-b 
\end{array}
\right], \qquad
M=\left[ 
\begin{array}{ccc} 
G &\textbf{0}&\textbf{0}\\
\textbf{0}&H+\frac{\left(\tau-\tau\theta+\theta\right)\beta}{\tau+\theta}B^*B& -\frac{\tau}{\tau+\theta}B^{\ast}\\[2mm]
\textbf{0}&-\frac{\tau}{\tau+\theta}B & \frac{1}{(\tau+\theta)\beta}I
\end{array} \right].
\end{equation}
 Then, $T$ is maximal monotone and $M$ is symmetric positive semidefinite.  
\end{proposition}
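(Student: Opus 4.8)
The plan is to verify the two assertions separately, since the maximal monotonicity of $T$ is essentially standard while the positive semidefiniteness of $M$ is where the structural condition on $(\tau,\theta)$ enters.

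\textbf{Maximal monotonicity of $T$.} First I would observe that $T$ decomposes as the sum of two maximal monotone operators on $\R^n\times\R^p\times\R^m$. The map $(x,y,\gamma)\mapsto(\partial f(x),\partial g(y),0)$ is maximal monotone because $f$ and $g$ are proper, closed and convex, so their subdifferentials are maximal monotone, and a block-diagonal combination of maximal monotone operators (with the trivial operator in the $\gamma$-block) is again maximal monotone. The remaining part $(x,y,\gamma)\mapsto(-A^*\gamma,-B^*\gamma,Ax+By-b)$ is affine with a skew-symmetric linear part (plus a constant), hence monotone and continuous, therefore maximal monotone. Since one of the two summands has full domain $\R^n\times\R^p\times\R^m$, the classical sum theorem (Rockafellar) guarantees that $T$ is maximal monotone. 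I would state this in one or two lines citing the standard facts.

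\textbf{Symmetry and positive semidefiniteness of $M$.} Symmetry of $M$ is immediate from its block form, since $G$ and $H+\frac{(\tau-\tau\theta+\theta)\beta}{\tau+\theta}B^*B$ are symmetric and the off-diagonal blocks are transposes of one another. For positive semidefiniteness, the $x$-block decouples (it is just $G\in\mathbb S^n_{++}$), so it suffices to show that the $2\times 2$ block operator
\[
N:=\left[\begin{array}{cc} H+\frac{(\tau-\tau\theta+\theta)\beta}{\tau+\theta}B^*B & -\frac{\tau}{\tau+\theta}B^* \\[1mm] -\frac{\tau}{\tau+\theta}B & \frac{1}{(\tau+\theta)\beta}I\end{array}\right]
\]
is positive semidefinite on $\R^p\times\R^m$. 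Since $H\in\mathbb S^p_+$ contributes a nonnegative term, it is enough to handle the matrix with $H$ replaced by $\mathbf 0$. For $(y,\gamma)\in\R^p\times\R^m$, writing $w=By$, the quadratic form becomes $\frac{(\tau-\tau\theta+\theta)\beta}{\tau+\theta}\|w\|^2-\frac{2\tau}{\tau+\theta}\inner{w}{\gamma}+\frac{1}{(\tau+\theta)\beta}\|\gamma\|^2$. Multiplying by the positive scalar $(\tau+\theta)$, this reduces to showing $(\tau-\tau\theta+\theta)\beta\|w\|^2-2\tau\inner{w}{\gamma}+\frac1\beta\|\gamma\|^2\ge0$ for all $w,\gamma$. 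A completion of squares (or the standard $2\times2$ criterion: diagonal entries nonnegative and determinant of the associated $2\times2$ scalar form nonnegative) shows this holds iff $\tau-\tau\theta+\theta\ge0$ and $(\tau-\tau\theta+\theta)\cdot1-\tau^2\ge0$, i.e. $\tau+\theta-\tau\theta-\tau^2\ge0$. I would then check that both inequalities follow from $(\tau,\theta)\in\mathcal R_{\tilde\sigma}$: indeed $\tau+\theta>0$ together with $\tau<1$ gives $\tau+\theta-\tau\theta=\tau(1-\theta)+\theta$ — this needs a short argument — and the key inequality $\tau+\theta-\tau\theta-\tau^2\ge0$ should be extractable from the third defining inequality of $\mathcal R_{\tilde\sigma}$, namely $(1-\tau^2)(2-\tau-\theta-\tilde\sigma)-(1-\theta)^2(1-\tau-\tilde\sigma)>0$, combined with $\tilde\sigma\ge0$ and the bounds $\tau\in(-1,1-\tilde\sigma)$, $\tau+\theta>0$ noted in Remark~\ref{remarkalg}(a).

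\textbf{Main obstacle.} The routine parts (symmetry, maximal monotonicity, reduction to a scalar $2\times2$ form) are easy; the real work is the last step — deriving the two scalar inequalities $\tau+\theta-\tau\theta\ge0$ and $\tau+\theta-\tau\theta-\tau^2\ge0$ purely from membership in $\mathcal R_{\tilde\sigma}$. I expect this to require carefully manipulating the cubic-type inequality defining $\mathcal R_{\tilde\sigma}$, using the auxiliary facts from Remark~\ref{remarkalg}(a) that $1-\tau-\tilde\sigma>0$, $2-\tau-\theta-\tilde\sigma>0$ and $\theta<2$, and treating the boundary cases (e.g. $\tau=0$, or $\tau<0$) where signs flip. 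If the strict inequality is needed I would note $M$ need only be positive semidefinite, so the non-strict versions suffice. I would organize this as: (i) symmetry, (ii) maximal monotonicity, (iii) reduction to the $2\times2$ scalar form, (iv) the inequality chasing, and present (iv) as the crux.
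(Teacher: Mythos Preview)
Your overall strategy matches the paper's: the maximal monotonicity argument is the same (subdifferential part plus a skew-symmetric affine part, then the sum rule), and the reduction of the $M$-positivity to a scalar $2\times2$ check on the $(y,\gamma)$ block is exactly what the paper does (the paper phrases it via Cauchy--Schwarz, replacing $-2\tau\inner{By}{\gamma}$ by $-2|\tau|\,\|By\|\,\|\gamma\|$, but the resulting $2\times2$ test is identical to yours).

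Where you go astray is in your ``Main obstacle''. You anticipate having to extract the inequalities $\tau+\theta-\tau\theta\ge0$ and $\tau+\theta-\tau\theta-\tau^2\ge0$ from the cubic constraint defining $\mathcal R_{\tilde\sigma}$, and you flag this as the crux. In fact no such work is needed: the key quantity factors as
\[
\tau+\theta-\tau\theta-\tau^2=(1-\tau)(\tau+\theta),
\]
which is strictly positive directly from the elementary conditions $\tau<1-\tilde\sigma\le1$ and $\tau+\theta>0$ in \eqref{def:Reg}. Since the $(2,2)$ entry of your $2\times2$ scalar form is $1/\beta>0$, this determinant condition alone (via a Schur complement, or by completing the square in $\gamma$) already gives positive semidefiniteness; the inequality $\tau-\tau\theta+\theta\ge0$ then follows for free because $\tau-\tau\theta+\theta=(1-\tau)(\tau+\theta)+\tau^2$. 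The third inequality in \eqref{def:Reg} is never invoked here --- it is used only later, in Proposition~\ref{lm:coef}. So step (iv) in your plan collapses to a one-line factorization, and the proof is entirely routine.
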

\begin{proof}
Note that $T$ can be decomposed as $T=\widetilde{T}+\widehat{T}$, where
\begin{equation*}
\widetilde{T}(z):=
\left(\partial f(x), \partial g(y), -b \right) \quad\mbox{and}\quad \widehat{T}(z):= Dz, \quad\mbox{with}\qquad 
 D :=  \left[
\begin{array}{ccc} 
\textbf{0}& \textbf{0}& -A^{\ast}\\ \textbf{0}& \textbf{0}& -B^{*}\\ A& B& \textbf{0} 
\end{array}
\right].
 \end{equation*}
Thus, since $f$ and $g$ are  convex functions,  the operators $\partial f$ and $\partial g$ are maximal monotone (see \cite{Rockafellar70}) and, hence, the operator $\widetilde{T}$ is maximal monotone as well. In addition,  since $D$ is skew-symmetric, $\widehat{T}$ is also maximal monotone.  Therefore, we obtain that $T$ is maximal monotone. 

Now, it is evident that $M$ is symmetric and, using the inequality of Cauchy-Schwarz, for every $z=(x,y,\gamma)\in\mathbb{R}^n\times \mathbb{R}^p\times \mathbb{R}^m$
\begin{align} \nonumber
\Inner{Mz}{z}&= \Norm{x}_{G}^{2}+\Norm{y}_{H}^{2}+
\frac{(\tau-\tau\theta+\theta)\beta}{\tau+\theta}\Norm{By}^{2}-\frac{2\tau}{\tau+\theta}\Inner{By}{\gamma}+
\frac{1}{(\tau+\theta)\beta}\Norm{\gamma}^{2}\\\label{eq:de45}
&\geq\frac{(\tau-\tau\theta+\theta)\beta}{\tau+\theta}\Norm{By}^{2}-\frac{2\Abs{\tau}}{\tau+\theta}\Norm{By}\Norm{\gamma}+
\frac{1}{(\tau+\theta)\beta}\Norm{\gamma}^{2} 
=\Inner{Pw}{w},
\end{align}
where $w:=\left(\Norm{\gamma},\Norm{By}\right)$ and 
\[
P:=\left[ 
\begin{array}{cc} 
\frac{1}{(\tau+\theta)\beta}&-\frac{\Abs{\tau}}{\tau+\theta}\\[2mm]
  -\frac{\Abs{\tau}}{\tau+\theta} & \frac{\left(\tau-\tau\theta+\theta\right)\beta}{\tau+\theta}
\end{array} \right]. 
\]
From  step 0 of Algorithm~\ref{alg:in:sy}, we obtain 
\begin{equation*}
P_{1,1}=\frac{1}{(\tau+\theta)\beta}>0,  \quad \mbox{and}\quad
\det(P)=\frac{(1-\tau)(\tau+\theta)}{(\tau+\theta)^{2}}>0,
\end{equation*}
Therefore,  $P$ is symmetric positive definite and, hence, the statement on $M$ follows now from~\eqref{eq:de45}.
\end{proof}

We next establish a technical result.

\begin{lemma} \label{rel:tilde}
Consider the sequences $\{p_k\}$ and  $\{q_k\}$ defined by
\begin{equation}\label{def:pq}
p_{k}= B\left(y_{k}-y_{k-1}\right), \qquad  q_{k}=-\beta\left(A\tilde{x}_{k}+By_{k}-b\right), \quad \forall\, k\geq 1.
\end{equation}
 Then, for every $k\geq 1$, the following equalities hold:
\begin{align}\label{gtk:gk}
\tilde{\gamma}_{k}-\gamma_{k-1}&=\beta p_{k} + q_{k}, &  \tilde{\gamma}_{k}-\gamma_{k}=\left(1-\tau\right)\beta p_k+\left(1-\tau-\theta\right)q_{k}, \\
 \gamma_{k}-\gamma_{k-1}&=\tau\beta p_{k} +\left(\tau+\theta\right)q_{k}. & \label{gtk:gk2}
\end{align}
\end{lemma}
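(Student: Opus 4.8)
The statement to prove is Lemma~\ref{rel:tilde}, which expresses the three increments $\tilde{\gamma}_k-\gamma_{k-1}$, $\tilde{\gamma}_k-\gamma_k$ and $\gamma_k-\gamma_{k-1}$ in terms of the auxiliary quantities $p_k = B(y_k-y_{k-1})$ and $q_k = -\beta(A\tilde x_k + By_k - b)$. The plan is to compute each increment directly from the defining formulas \eqref{tilde}, \eqref{mult_1} and \eqref{mult_2}, massaging the residual terms so that the combination $A\tilde x_k + By_{k-1} - b$ that appears in those formulas gets re-expressed through $A\tilde x_k + By_k - b$ (which is $-q_k/\beta$) and the correction $B(y_{k-1}-y_k) = -p_k$.

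First I would handle $\tilde{\gamma}_k-\gamma_{k-1}$. By \eqref{tilde}, $\tilde{\gamma}_k - \gamma_{k-1} = -\beta(A\tilde x_k + By_{k-1} - b)$. Writing $By_{k-1} = By_k - B(y_k - y_{k-1}) = By_k - p_k$, this becomes $-\beta(A\tilde x_k + By_k - b) + \beta p_k = q_k + \beta p_k$, which is the first equality in \eqref{gtk:gk}. Next, for $\gamma_k - \gamma_{k-1}$: combining the two updates in \eqref{mult_1} and \eqref{mult_2}, $\gamma_k = \gamma_{k-1} - \tau\beta(A\tilde x_k + By_{k-1} - b) - \theta\beta(A\tilde x_k + By_k - b)$. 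The second bracket is exactly $-q_k/\beta$, so its contribution is $\theta q_k$; the first bracket, as just computed, equals $-(q_k + \beta p_k)/\beta$ up to sign, i.e. $-\beta(A\tilde x_k + By_{k-1} - b) = q_k + \beta p_k$, giving a contribution $\tau\beta p_k + \tau q_k$. Summing yields $\gamma_k - \gamma_{k-1} = \tau\beta p_k + (\tau+\theta) q_k$, which is \eqref{gtk:gk2}. Finally, the second equality in \eqref{gtk:gk} follows by subtraction: $\tilde{\gamma}_k - \gamma_k = (\tilde{\gamma}_k - \gamma_{k-1}) - (\gamma_k - \gamma_{k-1}) = (\beta p_k + q_k) - (\tau\beta p_k + (\tau+\theta)q_k) = (1-\tau)\beta p_k + (1-\tau-\theta) q_k$.

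There is essentially no obstacle here — this is a routine but necessary bookkeeping lemma. The only point requiring a little care is consistent sign tracking between the two multiplier half-steps and the way $q_k$ is defined with a leading $-\beta$; once the identity $-\beta(A\tilde x_k + By_{k-1}-b) = \beta p_k + q_k$ is recorded, everything else is linear algebra. I would present the three equalities in the order first equality of \eqref{gtk:gk}, then \eqref{gtk:gk2}, then deduce the second equality of \eqref{gtk:gk} by subtraction, since that keeps each step to a one-line computation and avoids re-deriving the $By_{k-1}$ substitution twice.
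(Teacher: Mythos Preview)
Your proposal is correct and follows essentially the same approach as the paper: direct substitution of the definitions \eqref{tilde}, \eqref{mult_1}, \eqref{mult_2} together with the rewriting $By_{k-1}=By_k-p_k$. The only cosmetic difference is ordering --- the paper derives the second equality in \eqref{gtk:gk} directly and then \eqref{gtk:gk2} directly, whereas you obtain \eqref{gtk:gk2} first and then get the second equality of \eqref{gtk:gk} by subtraction, which is slightly more economical.
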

\begin{proof}
From the definition of $\tilde{\gamma}_{k}$ in \eqref{tilde}, we have
\begin{equation*}
\tilde{\gamma}_{k}-\gamma_{k-1}
=\beta B\left(y_{k}-y_{k-1}\right)-\beta\left(A\tx_{k}+By_{k}-b\right),
\end{equation*}
which, in view of  \eqref{def:pq}, proves the first identity in \eqref{gtk:gk}. Now, using \eqref{tilde}, \eqref{mult_1} and the definition of $\gamma_{k}$ in \eqref{mult_2} we get
\begin{align*}
\tilde{\gamma}_{k}-\gamma_{k}&= \gamma_{k-1} -\gamma_{k-\frac{1}{2}}-\beta\left(A\tx_{k}+By_{k-1}-b\right)
+\theta\beta\left(A\tx_{k}+By_{k}-b\right)\\
&=-\left(1-\tau\right)\beta\left(A\tx_{k}+By_{k-1}-b\right)+\theta\beta\left(A\tx_{k}+By_{k}-b\right)\\
&=(1-\tau)\beta B\left(y_{k}-y_{k-1}\right)-\left(1-\tau-\theta\right)\beta\left(A\tx_{k}+By_{k}-b\right).
\end{align*}
This equality, together with  \eqref{def:pq}, implies  the second identity in \eqref{gtk:gk}.  
Again using the definitions of $\gamma_{k-\frac{1}{2}}$ and  $\gamma_{k}$ in \eqref{mult_1} and \eqref{mult_2}, respectively, we obtain
\begin{align*}
\gamma_{k}-\gamma_{k-1}&=-\theta\beta\left(A\tx_{k}+By_{k}-b\right)-\tau\beta\left(A\tx_{k}+By_{k-1}-b\right)\\
&=\tau\beta B\left(y_{k}-y_{k-1}\right)-\left(\tau+\theta\right)\beta\left(A\tx_{k}+By_{k}-b\right),
\end{align*}
which, combined with \eqref{def:pq},  yields  \eqref{gtk:gk2}.
\end{proof}

We next  show that  the inclusion in  \eqref{eq:bn45} holds.

\begin{theorem} \label{pr:aux}
For every $k\geq 1$,  the following estimatives hold:
\begin{align}
G(x_{k-1}-x_{k})&\in \partial f(\tx_k)-A^*\tilde{\gamma}_k,  \label{1:incl}\\[2mm]
\left(\!H+\frac{(\tau-\tau\theta+\theta)\beta}{\tau+\theta} B^{\ast}B\!\right)\!\!(y_{k-1}\!-\!y_{k})-\frac{\tau}{\tau+\theta}B^{\ast}(\gamma_{k-1}-\gamma_{k})&\in\partial g(y_k)\!-\!B^{\ast}\tilde{\gamma}_k,\label{2:incl}\\[2mm]
-\frac{\tau}{\tau+\theta}B(y_{k-1}-y_{k})+\frac{1}{(\tau+\theta)\beta}(\gamma_{k-1}-\gamma_{k})&=A\tx_k+By_k-b.\label{3:equ}
\end{align}
As a consequence, for every  $k\geq 1$,
\[
M\left(z_{k-1}-z_{k}\right)\in T(\tz_{k}),
\]
where 
\begin{equation}\label{def:ztz}
z_{k}:=\left(x_{k},y_{k},\gamma_{k}\right)\quad\forall\, k\geq 0, \qquad \tz_{k}:=\left(\tx_{k},y_{k},\tilde{\gamma}_{k}\right)\quad\forall\,k\geq 1,
\end{equation}
and  $T$ and $M$ are as in \eqref{def:oper}.
\end{theorem}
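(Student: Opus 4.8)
The plan is to establish the three relations \eqref{1:incl}, \eqref{2:incl} and \eqref{3:equ} separately and then read off the asserted inclusion $M(z_{k-1}-z_{k})\in T(\tz_{k})$ by stacking them: by the definitions \eqref{def:oper} and \eqref{def:ztz}, the three block-rows of $M(z_{k-1}-z_{k})$ and of $T(\tz_{k})$ are exactly the left- and right-hand sides of \eqref{1:incl}, \eqref{2:incl} and \eqref{3:equ} respectively (the last block-row of $T$ being single-valued, equality there is precisely what is required).

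Relation \eqref{1:incl} is immediate: the first identity in \eqref{mult_2} gives $u_{k}=G(x_{k-1}-x_{k})$, so \eqref{cond:inex} reads precisely $G(x_{k-1}-x_{k})\in\partial f(\tx_{k})-A^{*}\tilde{\gamma}_{k}$. For \eqref{3:equ} I would substitute $B(y_{k-1}-y_{k})=-p_{k}$ (from \eqref{def:pq}) and the formula for $\gamma_{k-1}-\gamma_{k}$ obtained from \eqref{gtk:gk2}; the two $p_{k}$-contributions cancel and what remains is $-q_{k}/\beta$, which equals $A\tx_{k}+By_{k}-b$ again by the definition of $q_{k}$ in \eqref{def:pq}.

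The substantive step is \eqref{2:incl}. First I would write the first-order optimality condition of the convex subproblem \eqref{g_sub},
\[
0\in\partial g(y_{k})-B^{*}\gamma_{k-\frac{1}{2}}+\beta B^{*}(A\tx_{k}+By_{k}-b)+H(y_{k}-y_{k-1}),
\]
which, using $q_{k}=-\beta(A\tx_{k}+By_{k}-b)$, rearranges to $H(y_{k-1}-y_{k})+B^{*}(\gamma_{k-\frac{1}{2}}+q_{k})\in\partial g(y_{k})$. Comparing with \eqref{2:incl}, it then suffices to prove the vector identity
\[
\gamma_{k-\frac{1}{2}}+q_{k}=\frac{(\tau-\tau\theta+\theta)\beta}{\tau+\theta}\,B(y_{k-1}-y_{k})-\frac{\tau}{\tau+\theta}(\gamma_{k-1}-\gamma_{k})+\tilde{\gamma}_{k}.
\]
For the left-hand side I would use \eqref{mult_1} together with $A\tx_{k}+By_{k-1}-b=-p_{k}-q_{k}/\beta$ to get $\gamma_{k-\frac{1}{2}}=\gamma_{k-1}+\tau\beta p_{k}+\tau q_{k}$, hence $\gamma_{k-\frac{1}{2}}+q_{k}=\gamma_{k-1}+\tau\beta p_{k}+(1+\tau)q_{k}$. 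For the right-hand side I would substitute $B(y_{k-1}-y_{k})=-p_{k}$, the expression for $\gamma_{k-1}-\gamma_{k}$ from \eqref{gtk:gk2}, and $\tilde{\gamma}_{k}=\gamma_{k-1}+\beta p_{k}+q_{k}$ from \eqref{gtk:gk}; collecting terms, the coefficient of $p_{k}$ reduces to $\tau\beta$ (its numerator $-(\tau-\tau\theta+\theta)+\tau^{2}+\tau+\theta$ factoring as $\tau(\tau+\theta)$) and the coefficient of $q_{k}$ is $1+\tau$, so both sides coincide.

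The main obstacle I anticipate is the bookkeeping in \eqref{2:incl}: one must translate all the multiplier updates into the $p_{k},q_{k}$ variables through Lemma \ref{rel:tilde} and check that the $(\tau-\tau\theta+\theta)$-weighted $B^{*}B$ block is exactly what absorbs the $\beta B^{*}(A\tx_{k}+By_{k}-b)$ term coming from the subproblem optimality condition; once the identity above is verified, stacking \eqref{1:incl}, \eqref{2:incl} and \eqref{3:equ} and invoking the definition \eqref{def:oper} of $M$ and $T$ finishes the proof.
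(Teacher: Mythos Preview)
Your proposal is correct and follows essentially the same approach as the paper: verify \eqref{1:incl}, \eqref{3:equ} and \eqref{2:incl} separately from the update rules and the optimality condition of \eqref{g_sub}, then stack them via the definitions of $T$ and $M$. The only cosmetic difference is that you carry out all the algebra in the $p_{k},q_{k}$ variables of Lemma~\ref{rel:tilde}, whereas the paper manipulates the multiplier updates directly and, in proving \eqref{2:incl}, substitutes for $A\tx_{k}+By_{k}-b$ using the already-established \eqref{3:equ} rather than expanding in $p_{k},q_{k}$.
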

\begin{proof}
Inclusion in  \eqref{1:incl} follows trivially from \eqref{cond:inex} and the  definition of $x_{k}$ in \eqref{mult_2}.
It follows from \eqref{mult_1} and \eqref{mult_2}  that
\begin{align*}
\gamma_k-\gamma_{k-1}&=-\theta\beta\left(A\tx_k+By_k-b \right)-\tau\beta\left(A\tx_k+By_{k-1}-b \right)\\
&=-(\tau+\theta)\beta\left(A\tx_k+By_k-b \right)+\tau\beta B\left(y_{k}-y_{k-1}\right),
\end{align*}
which is equivalent to \eqref{3:equ}. Now, from the optimality condition for \eqref{g_sub}, we have 
\begin{equation}\label{aux1:sub_y}
0\in\partial g(y_{k})- B^{\ast}\left[\gamma_{k-\frac{1}{2}}-\beta\left(A\tx_{k}+By_{k}-b\right)\right] + H\left(y_{k}-y_{k-1}\right).
\end{equation}
On the other hand, using \eqref{tilde}, we obtain
\begin{align*}
\gamma_{k-\frac{1}{2}}-\beta\left(A\tx_{k}+By_{k}-b\right)&=
\gamma_{k-\frac{1}{2}}-\beta\left(A\tx_{k}+By_{k-1}-b\right)-\beta B\left(y_{k}-y_{k-1}\right)\\
&=\tilde{\gamma}_{k}+\gamma_{k-\frac{1}{2}}-\gamma_{k-1}-\beta B\left(y_{k}-y_{k-1}\right).
\end{align*}
From the definition of $\gamma_{k}$ in \eqref{mult_2}, we find
\begin{align*}
\gamma_{k-\frac{1}{2}}-\gamma_{k-1}
&=\gamma_{k-\frac{1}{2}}-\gamma_{k}+\gamma_{k}-\gamma_{k-1}=\theta\beta\left(A\tx_{k}+By_{k}-b\right)+\gamma_{k}-\gamma_{k-1}\\
&=\theta\beta\left[\frac{\tau}{\tau+\theta}B\left(y_{k}-y_{k-1}\right)-\frac{1}{(\tau+\theta)\beta}\left(\gamma_{k}-\gamma_{k-1}\right)\right]+\gamma_{k}-\gamma_{k-1}\\
&=\frac{\tau\theta\beta}{\tau+\theta}B\left(y_{k}-y_{k-1}\right)+\frac{\tau}{\tau+\theta}\left(\gamma_{k}-\gamma_{k-1}\right),
\end{align*}
where the last equality is due to  \eqref{3:equ}.  Combining the last two equalities, we have
\begin{equation*}
\gamma_{k-\frac{1}{2}}-\beta\left(A\tx_{k}+By_{k}-b\right)=
\tilde{\gamma}_{k}-\frac{(\tau-\tau\theta+\theta)\beta}{\tau+\theta}B\left(y_{k}-y_{k-1}\right)+\frac{\tau}{\tau+\theta}\left(\gamma_{k}-\gamma_{k-1}\right),
\end{equation*}
which, combined with \eqref{aux1:sub_y},  implies \eqref{2:incl}.
\end{proof}

In the remaining part of this section, we will prove that the inequality in \eqref{eq:bn45} holds. Toward this
goal, we next establish three technical results.

\begin{lemma}\label{cor:aux2} Let $\{z_k\}$ and $\{\tilde{z}_{k}\}$ be as in   \eqref{def:ztz}. Then, 
for every $z^{\ast}\in T^{-1}(0)$, we have
\[
\Norm{z^{\ast}-z_{k}}_{M}^{2}-\Norm{z^{\ast}-z_{k-1}}_{M}^{2}
\leq \Norm{\tz_{k}-z_{k}}_{M}^{2}-\Norm{\tz_{k}-z_{k-1}}_{M}^{2}, \quad \forall\,k\geq 1.
\]
\end{lemma}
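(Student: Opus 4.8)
The plan is to exploit the monotonicity of $T$ together with the inclusion $M(z_{k-1}-z_k)\in T(\tz_k)$ established in Theorem~\ref{pr:aux}. Since $z^\ast\in T^{-1}(0)$, we have $0\in T(z^\ast)$, and by monotonicity of $T$ applied to the pair $(\tz_k, M(z_{k-1}-z_k))\in\Graph(T)$ and $(z^\ast,0)\in\Graph(T)$,
\[
\Inner{M(z_{k-1}-z_k)}{\tz_k-z^\ast}\geq 0, \quad\forall\,k\geq 1.
\]
This is the only place where the operator $T$ itself enters; from here everything is linear algebra with the (symmetric positive semidefinite) matrix $M$ and its induced seminorm.

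Next I would rewrite the inner product above purely in terms of $M$-seminorms by inserting and regrouping the relevant points. Writing $\tz_k - z^\ast = (\tz_k - z_{k-1}) + (z_{k-1}-z^\ast)$ and using that $M$ is symmetric, the polarization-type identity
\[
2\Inner{M(z_{k-1}-z_k)}{\tz_k-z^\ast}
=\Norm{z^\ast-z_k}_M^2-\Norm{z^\ast-z_{k-1}}_M^2
+\Norm{\tz_k-z_{k-1}}_M^2-\Norm{\tz_k-z_k}_M^2
\]
should hold; one checks it by expanding both sides using $\Norm{a}_M^2=\Inner{Ma}{a}$ and cancelling the cross terms (this is the routine calculation I would not grind through, but it is just the standard ``three-point'' identity for the bilinear form $\Inner{M\cdot}{\cdot}$, valid because $M\in\mathbb{S}^{n}_{+}$ is symmetric). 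Combining this identity with the nonnegativity of the left-hand side yields exactly the claimed inequality after moving terms across.

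The only subtlety — and the one point I would be careful about — is that $M$ is merely positive semidefinite, not definite, so $\Norm{\cdot}_M$ is a seminorm and one cannot invoke any cancellation that implicitly assumes definiteness. But the three-point identity above is an exact algebraic identity valid for every symmetric $M$ (semidefinite or not), so no difficulty actually arises; the semidefiniteness from Proposition~\ref{def:2oper} is only needed to guarantee that all four terms $\Norm{\cdot}_M^2$ are well-defined nonnegative quantities, keeping the statement meaningful. Thus the proof reduces to: (i) invoke $M(z_{k-1}-z_k)\in T(\tz_k)$ from Theorem~\ref{pr:aux}; (ii) invoke monotonicity of $T$ against a zero $z^\ast$; (iii) expand via the symmetric three-point identity for $\Inner{M\cdot}{\cdot}$; (iv) drop the nonnegative inner-product term. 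There is essentially no genuine obstacle here — the lemma is the ``monotonicity bookkeeping'' step that feeds into the later telescoping argument combined with the error inequality in \eqref{eq:bn45}.
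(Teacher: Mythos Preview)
Your approach is exactly the paper's: invoke $M(z_{k-1}-z_k)\in T(\tz_k)$ from Theorem~\ref{pr:aux}, use monotonicity of $T$ against $0\in T(z^\ast)$ to get $\Inner{M(z_{k-1}-z_k)}{\tz_k-z^\ast}\geq 0$, and then expand via the three-point identity for the bilinear form $\Inner{M\cdot}{\cdot}$. The paper carries out precisely this computation by writing $z^\ast-z_k=(z^\ast-\tz_k)+(\tz_k-z_k)$ and $z^\ast-z_{k-1}=(z^\ast-\tz_k)+(\tz_k-z_{k-1})$.

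One small slip to fix: your displayed identity has the wrong sign. A direct expansion gives
\[
\Norm{z^\ast-z_k}_M^2-\Norm{z^\ast-z_{k-1}}_M^2
+\Norm{\tz_k-z_{k-1}}_M^2-\Norm{\tz_k-z_k}_M^2
=2\Inner{M(z_{k-1}-z_k)}{z^\ast-\tz_k},
\]
i.e.\ $-2\Inner{M(z_{k-1}-z_k)}{\tz_k-z^\ast}$, not $+2\Inner{M(z_{k-1}-z_k)}{\tz_k-z^\ast}$. With your sign, nonnegativity of the inner product would yield the reverse inequality. Once the sign is corrected, the conclusion follows exactly as you describe.
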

\begin{proof}
As $M\left(z_{k-1}-z_{k}\right)\in T(\tz_{k})$  (Theorem~\ref{pr:aux}), $T$ is monotone maximal (Proposition~\ref{def:2oper}) and $0 \in T(z^*)$,  we obtain $\Inner{M(z_{k-1}-z_{k})}{\tilde{z}_{k}-z^{\ast}}\geq 0$. Hence, 
\begin{align*}
\|z^{*}-z_{k}\|^{2}_{M}-\|z^{*}-z_{k-1}\|^{2}_{M}&=\|z^{*}-\tilde{z}_{k}+\tilde{z}_{k}-z_{k}\|^{2}_{M}-\|z^{*}-\tilde{z}_{k}+\tilde{z}_{k}-z_{k-1}\|^{2}_{M}\\
& = \|\tilde{z}_{k}-z_{k}\|^{2}_{M}+2\langle M(z_{k-1}- z_{k}),z^{*}-\tilde{z}_{k} \rangle-\|\tilde{z}_{k}-z_{k-1}\|^{2}_{M}\\ 
& \leq \|\tilde{z}_{k}-z_{k}\|^{2}_{M}-\|\tilde{z}_{k}-z_{k-1}\|^{2}_{M},
\end{align*}
concluding the proof.
\end{proof}


\begin{proposition} \label{pr:ang}
Define the matrix $Q$ and the scalar $\vartheta$ as 
\begin{equation}\label{eq:l90}
Q=\left[ 
\begin{array}{cc} 
\left(3-3\tau-2\tilde{\sigma}\right)\beta I &2\left(1-\tau-\tilde{\sigma}\right)I\\[2mm]
2\left(1-\tau-\tilde{\sigma}\right)I & \frac{4-\tau-\theta-2\tilde{\sigma}}{\beta}I
\end{array} \right],
\end{equation}
and 
\begin{equation}\label{def:vart} 
\vartheta=\sqrt{\left(3-3\tau-2\tilde{\sigma}\right)\left(4-\tau-\theta-2\tilde{\sigma}\right)}-2\left(1-\tau-\tilde{\sigma}\right).
\end{equation}
Then,  $Q$ is symmetric positive definite and  $\vartheta>0$. Moreover, for any $(y,\gamma)\in\R^{p}\times\R^{m}$  
\begin{equation*}\label{eq:a54}
\Norm{\left(y,\gamma\right)}_{Q}^{2}\geq - 2\vartheta\Inner{y}{\gamma}.
\end{equation*} 
\end{proposition}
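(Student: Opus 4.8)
The plan is to exploit the fact that every block of $Q$ is a scalar multiple of an identity, so that the whole statement reduces to a claim about one fixed $2\times 2$ matrix. Abbreviate $A_1:=3-3\tau-2\tilde{\sigma}$, $A_2:=4-\tau-\theta-2\tilde{\sigma}$ and $A_3:=1-\tau-\tilde{\sigma}$, so that $Q$ is the (evidently symmetric) block matrix with diagonal blocks $A_1\beta I$, $(A_2/\beta)I$ and off-diagonal blocks $2A_3 I$, and $\vartheta=\sqrt{A_1A_2}-2A_3$. First I would record the elementary positivity facts: by Remark~\ref{remarkalg}(a) one has $\theta<2$ and $2-\tau-\theta-\tilde{\sigma}>0$, while $\tau>-1$ and $\tilde{\sigma}\in[0,1)$ by hypothesis; hence $A_3>0$ (this is the first condition defining $\mathcal{R}_{\tilde{\sigma}}$), $A_1=3A_3+\tilde{\sigma}>0$, and $A_2=(2-\tau-\theta-\tilde{\sigma})+(2-\tilde{\sigma})>0$. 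In particular $\vartheta$ is well defined.

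The heart of the argument is the single scalar inequality $A_1A_2-4A_3^2>0$, which simultaneously yields positive definiteness of $Q$ and strict positivity of $\vartheta$. To prove it I would use the identities $A_1=3A_3+\tilde{\sigma}$ and $A_2=A_3+(3-\theta-\tilde{\sigma})$, which give
\[
A_1A_2-4A_3^2=A_3\bigl(3(3-\theta-\tilde{\sigma})+\tilde{\sigma}-A_3\bigr)+\tilde{\sigma}(3-\theta-\tilde{\sigma})=A_3\,(8+\tau-3\theta-\tilde{\sigma})+\tilde{\sigma}\,(3-\theta-\tilde{\sigma}).
\]
Each factor on the right is nonnegative: $A_3>0$ and $\tilde{\sigma}\geq 0$ as above, while $3-\theta-\tilde{\sigma}>0$ and $8+\tau-3\theta-\tilde{\sigma}>0$ follow from the crude bounds $\theta<2$, $\tilde{\sigma}<1$, $\tau>-1$. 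Hence $A_1A_2>4A_3^2=(2A_3)^2$. From this, positive definiteness of $Q$ follows since the block structure reduces it to that of $\bigl[\begin{smallmatrix}A_1\beta & 2A_3\\ 2A_3 & A_2/\beta\end{smallmatrix}\bigr]$, which holds because its $(1,1)$ entry and its determinant $A_1A_2-4A_3^2$ are positive; and $\vartheta=\sqrt{A_1A_2}-2A_3>0$ follows at once from $\sqrt{A_1A_2}>2A_3\geq 0$.

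For the last inequality I would expand $\|(y,\gamma)\|_Q^2=A_1\beta\|y\|^2+4A_3\inner{y}{\gamma}+(A_2/\beta)\|\gamma\|^2$ and observe that, by the very definition of $\vartheta$, $4A_3+2\vartheta=2\sqrt{A_1A_2}=2\sqrt{A_1\beta}\,\sqrt{A_2/\beta}$; therefore
\[
\|(y,\gamma)\|_Q^2+2\vartheta\inner{y}{\gamma}=\bigl\|\sqrt{A_1\beta}\,y+\sqrt{A_2/\beta}\,\gamma\bigr\|^2\geq 0,
\]
which is exactly the claimed bound. The only genuinely delicate step is the positivity of $A_1A_2-4A_3^2$; the substitutions above make it transparent, and it is worth noting that the full quadratic defining inequality of $\mathcal{R}_{\tilde{\sigma}}$ enters this proposition only through the two consequences already isolated in Remark~\ref{remarkalg}(a). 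Everything else is routine bookkeeping.
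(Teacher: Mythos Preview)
Your proof is correct and follows essentially the same approach as the paper's: reduce to the scalar $2\times 2$ matrix, verify its leading entry and determinant are positive using the parameter constraints, and then complete the square for the final inequality. The only cosmetic difference is the particular algebraic decomposition of the determinant---the paper writes it as $(1-\tau-\tilde{\sigma})\bigl[(2-\tau-\theta-\tilde{\sigma})+2(3+\tau-\theta)\bigr]+\tilde{\sigma}(3-\tilde{\sigma}-\theta)$, which upon simplification is exactly your $A_3(8+\tau-3\theta-\tilde{\sigma})+\tilde{\sigma}(3-\theta-\tilde{\sigma})$---and the paper invokes $(2-\tau-\theta-\tilde{\sigma})>0$ explicitly in that step, whereas you only need it to show $A_2>0$ and otherwise rely on the crude bounds $\tau>-1$, $\theta<2$, $\tilde{\sigma}<1$.
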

\begin{proof}
Clearly $Q$ is symmetric, and is positive definite iff 
\begin{equation*}
\widehat{Q}=\left[ 
\begin{array}{cc} 
\left(3-3\tau-2\tilde{\sigma}\right)\beta &2\left(1-\tau-\tilde{\sigma}\right)\\[2mm]
2\left(1-\tau-\tilde{\sigma}\right) &\frac{4-\tau-\theta-2\tilde{\sigma}}{\beta}
\end{array} \right]
\end{equation*}
is positive definite. To show that $\widehat{Q}\in \mathbb{S}^{2}_{++}$ consider the scalars $\varrho$, $\tilde{\varrho}$, and $\hat{\varrho}$ defined by 
\begin{equation*}\label{vrho}
\varrho=\left(3-3\tau-2\tilde{\sigma}\right)\beta, \qquad 
\tilde{\varrho}=2\left(1-\tau-\tilde{\sigma}\right),\qquad\mbox{and}
\qquad \hat{\varrho}=\frac{4-\tau-\theta-2\tilde{\sigma}}{\beta}.
\end{equation*} 
Since $3-3\tau-2\tilde{\sigma}=\left(1-\tau\right)+2\left(1-\tau-\tilde{\sigma}\right)$ and  $4-\tau-\theta-2\tilde{\sigma}\!=\!\left(\tau+\theta\right)+2\left(2-\tau-\theta-\tilde{\sigma}\right)$, we obtain, from \eqref{def:Reg}, that $\varrho,\hat{\varrho}>0$. Moreover,
\begin{align*}
\varrho\hat{\varrho}-\tilde{\varrho}^{2}&=\left[\left(1-\tau\right)+2\left(1-\tau-\tilde{\sigma}\right)\right]\left(4-\tau-\theta-2\tilde{\sigma}\right)-4\left(1-\tau-\tilde{\sigma}\right)^{2}\\
&=\left(1-\tau-\tilde{\sigma}\right)\left[\left(2-\tau-\theta-\tilde{\sigma}\right)+2\left(3+\tau-\theta\right)\right]+\tilde{\sigma}\left(3-\tilde{\sigma}-\theta\right). 
\end{align*} 
From \eqref{def:Reg}, we have $(1-\tau-\tilde{\sigma})>0$,  $(2-\tau-\theta-\tilde{\sigma})>0$ and $\theta<2$. The latter inequality, together with the facts that $\tau>-1$ and $\tilde{\sigma}<1$, yields $3+\tau-\theta>0$ and $3-\tilde{\sigma}-\theta>0$. Therefore,  
$\det(\widehat{Q})>0$ and $\mbox{Tr}(\widehat{Q})>0$, and we conclude that $Q$ is positive definite. 
In addition, inequalities $\varrho\hat{\varrho}-\tilde{\varrho}^{2}>0$ and $(1-\tau-\tilde{\sigma})>0$ clearly  imply that $\vartheta>0$.

Now, for a given $\left(y,\gamma\right)\in \mathbb{R}^p\times\mathbb{R}^m$, using  \eqref{eq:l90}, \eqref{def:vart} and simple algebraic manipulations, we find
\begin{equation*}
\Norm{\left(y,\gamma\right)}_{Q}^{2}=\Norm{\sqrt{\left(3-3\tau-2\tilde{\sigma}\right)\beta}y
+\frac{\sqrt{4-\tau-\theta-2\tilde{\sigma}}}{\sqrt{\beta}}\gamma}^{2}
-2\vartheta\Inner{y}{\gamma}
\geq - 2\vartheta\Inner{y}{\gamma},
\end{equation*}
which concluded the proof of the proposition.
\end{proof}

\begin{proposition}\label{lm:coef}
Consider the functions $\varphi, \widehat{\varphi}, \widetilde{\varphi},\overline{\varphi}\colon\R\to\R$ defined by 
\begin{subequations}\label{coef}
\begin{align}
\varphi(\sigma)&= \left(1-\tau\right)\left(\sigma-1\right)+\left(1-\tau-\tilde{\sigma}\right)\left(\tau+\theta\right),\label{ph}
\\\noalign{\medskip}
\widehat{\varphi}(\sigma)&=\left(1-\tau\right)\left[\left(1+\theta\right)\sigma-1+\tau\right]-\tilde{\sigma}\left(\tau+\theta\right),\label{ph:hat}
\\\noalign{\medskip}
\widetilde{\varphi}(\sigma)&=\sigma-\left(1-\tau-\theta\right)^{2}-\tilde{\sigma}\left(\tau+\theta\right),\label{ph:til}
\\\noalign{\medskip}
\overline{\varphi}(\sigma)&=\left[\left(1+\tau\right)\widehat{\varphi}(\sigma)-2\tau\varphi(\sigma)\right]\left(1+\tau\right)\widetilde{\varphi}(\sigma)-\left(1-\theta\right)^{2}\left(\varphi(\sigma)\right)^{2}. \label{ph:bar}
\end{align}
\end{subequations}
Then, there exists a scalar ${\sigma}\in[\hat{\sigma},1)$ such that $\varphi(\sigma)\geq 0$, $\widehat{\varphi}(\sigma)\geq 0$,  $\widetilde{\varphi}(\sigma)> 0$ and  $\overline{\varphi}(\sigma)\geq 0$.
\end{proposition}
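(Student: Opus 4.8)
Proof proposal. The plan is to exhibit the required $\sigma$ by showing that the \emph{single} value $\sigma=1$ makes all four inequalities hold \emph{strictly}, and then to push this slightly inside the interval $[\hat\sigma,1)$ by a continuity argument (which is possible precisely because $\hat\sigma<1$). So the whole proposition reduces to evaluating $\varphi,\widehat\varphi,\widetilde\varphi,\overline\varphi$ at $1$ and checking signs, all of which will come from the defining conditions of $\mathcal{R}_{\tilde\sigma}$ in \eqref{def:Reg} together with the consequences $1-\tau-\tilde\sigma>0$, $2-\tau-\theta-\tilde\sigma>0$ recorded in Remark~\ref{remarkalg}(a).

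First I would compute, directly from \eqref{ph}, \eqref{ph:hat} and \eqref{ph:til} (using $1-(1-\tau-\theta)^2=(\tau+\theta)(2-\tau-\theta)$ for the third one),
\begin{equation*}
\varphi(1)=\widehat{\varphi}(1)=(\tau+\theta)(1-\tau-\tilde{\sigma}),\qquad
\widetilde{\varphi}(1)=(\tau+\theta)(2-\tau-\theta-\tilde{\sigma}).
\end{equation*}
Since $\tau+\theta>0$ by \eqref{def:Reg} and $1-\tau-\tilde{\sigma}>0$, $2-\tau-\theta-\tilde{\sigma}>0$ by Remark~\ref{remarkalg}(a), these three quantities are strictly positive. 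For $\overline{\varphi}(1)$ I would use $\widehat{\varphi}(1)=\varphi(1)$, which gives $(1+\tau)\widehat{\varphi}(1)-2\tau\varphi(1)=(1-\tau)\varphi(1)$, whence from \eqref{ph:bar}
\begin{equation*}
\overline{\varphi}(1)=\varphi(1)\bigl[(1-\tau^{2})\widetilde{\varphi}(1)-(1-\theta)^{2}\varphi(1)\bigr]
=(\tau+\theta)^{2}(1-\tau-\tilde{\sigma})\bigl[(1-\tau^{2})(2-\tau-\theta-\tilde{\sigma})-(1-\theta)^{2}(1-\tau-\tilde{\sigma})\bigr].
\end{equation*}
The bracket on the right is exactly the third (strict) inequality in the definition \eqref{def:Reg} of $\mathcal{R}_{\tilde{\sigma}}$, hence positive; together with $\tau+\theta>0$ and $1-\tau-\tilde{\sigma}>0$ this yields $\overline{\varphi}(1)>0$. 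Thus all four inequalities hold strictly at $\sigma=1$.

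To finish, I would note that $\varphi,\widehat{\varphi},\widetilde{\varphi}$ are affine and $\overline{\varphi}$ is a polynomial in $\sigma$, so all four are continuous; hence there is $\delta>0$ with $\varphi(\sigma),\widehat{\varphi}(\sigma),\widetilde{\varphi}(\sigma),\overline{\varphi}(\sigma)>0$ for all $\sigma\in(1-\delta,1)$. Because $\hat{\sigma}\in[0,1)$, the set $[\hat{\sigma},1)\cap(1-\delta,1)$ is a nonempty interval, and any $\sigma$ in it satisfies $\sigma\in[\hat{\sigma},1)$ together with $\varphi(\sigma)\geq 0$, $\widehat{\varphi}(\sigma)\geq 0$, $\widetilde{\varphi}(\sigma)>0$, $\overline{\varphi}(\sigma)\geq 0$, proving the claim. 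The only genuinely non-routine point is the middle step: realizing that $\sigma=1$ is the correct test value and carrying out the (short) algebraic simplification that collapses $\overline{\varphi}(1)$ onto the defining inequality of $\mathcal{R}_{\tilde{\sigma}}$; the rest is bookkeeping and a standard continuity/openness argument.
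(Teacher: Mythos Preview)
Your proposal is correct and follows essentially the same approach as the paper: evaluate $\varphi,\widehat\varphi,\widetilde\varphi,\overline\varphi$ at $\sigma=1$, use the defining conditions of $\mathcal{R}_{\tilde\sigma}$ (and the consequences in Remark~\ref{remarkalg}(a)) to see they are all strictly positive there, and then invoke continuity to obtain $\sigma\in[\hat\sigma,1)$. Your write-up is slightly more explicit than the paper's (you spell out $\widehat\varphi(1)=\varphi(1)$ and the continuity/intersection argument), but the strategy is identical.
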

\begin{proof} Since
\begin{equation*}
\varphi(1)=\left(1-\tau-\tilde{\sigma}\right)\left(\tau+\theta\right)=\widehat{\varphi}(1),  \qquad
\widetilde{\varphi}(1)=\left(2-\tau-\theta-\tilde{\sigma}\right)\left(\tau+\theta\right),
\end{equation*}
and   
\begin{align*}
\overline{\varphi}(1)=\left(1-\tau-\tilde{\sigma}\right)\left(\tau+\theta\right)^{2}\left[\left(1-\tau^{2}\right)\left(2-\tau-\theta-\tilde{\sigma}\right)-\left(1-\theta\right)^{2}\left(1-\tau-\tilde{\sigma}\right)\right],
\end{align*}
it follows from  \eqref{def:Reg}  that  all functions defined in \eqref{coef} are positive for $\sigma=1$. Therefore, there exists  $\sigma\in [\hat{\sigma},1)$ close to $1$ such that   the statements of the proposition hold. 
\end{proof}

The following lemma provides some estimates of the sequences $\{\Norm{\tz_{k}-z_{k-1}}_{M}^{2}\}$ and $\{\Norm{\tz_{k}-z_{k}}_{M}^{2}\}$, which appear  in \eqref{eq:bn45}.

\begin{lemma}\label{lm/abc}
Let $T$, $M$, $\{p_k\}$, $\{q_k\}$, $\{z_k\}$ and $\{\tilde{z}_{k}\}$ be as in \eqref{def:oper}, \eqref{def:pq} and \eqref{def:ztz}.
Then, for every $k\geq 1$,
\begin{equation}\label{eq:f456}
\Norm{\tz_{k}-z_{k-1}}_{M}^{2}=\Norm{\tx_{k}-x_{k-1}}_{G}^{2}+\Norm{y_{k}-y_{k-1}}_{H}^{2}+ a_{k}, \qquad 
\Norm{\tz_{k}-z_{k}}_{M}^{2}=\Norm{\tx_{k}-x_{k}}_{G}^{2}
+ b_{k},
\end{equation}
where 
\begin{equation*}
a_{k}:=\frac{(1-\tau)(1+\theta)\beta}{\tau+\theta}\Norm{p_{k}}^{2}+\frac{2(1-\tau)}{\tau+\theta}\Inner{p_{k}}{q_{k}}+\frac{1}{(\tau+\theta)\beta}\Norm{q_{k}}^{2},
\end{equation*}
and
\begin{equation*}
b_{k}:=\frac{(1-\tau)^{2}\beta}{\tau+\theta}\Norm{p_{k}}^{2}+\frac{2(1-\tau)(1-\tau-\theta)}{\tau+\theta}\Inner{p_{k}}{q_{k}}+\frac{(1-\tau-\theta)^{2}}{(\tau+\theta)\beta}\Norm{q_{k}}^{2}.
\end{equation*}
\end{lemma}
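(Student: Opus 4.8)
The plan is to compute $\Norm{\tz_{k}-z_{k-1}}_{M}^{2}$ and $\Norm{\tz_{k}-z_{k}}_{M}^{2}$ directly from the block structure of $M$ in \eqref{def:oper} and the identities of Lemma~\ref{rel:tilde}. Recall $\tz_{k}-z_{k-1}=(\tx_{k}-x_{k-1},\,y_{k}-y_{k-1},\,\tilde{\gamma}_{k}-\gamma_{k-1})$ and $\tz_{k}-z_{k}=(\tx_{k}-x_{k},\,0,\,\tilde{\gamma}_{k}-\gamma_{k})$; in the second case the middle component vanishes because $\tz_{k}$ and $z_{k}$ share the same $y$-coordinate $y_{k}$.

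First I would expand $\Inner{M\xi}{\xi}$ for a generic vector $\xi=(u,v,w)$ using the three blocks of $M$: this gives
\[
\Inner{M\xi}{\xi}=\Norm{u}_{G}^{2}+\Norm{v}_{H}^{2}+\frac{(\tau-\tau\theta+\theta)\beta}{\tau+\theta}\Norm{Bv}^{2}-\frac{2\tau}{\tau+\theta}\Inner{Bv}{w}+\frac{1}{(\tau+\theta)\beta}\Norm{w}^{2},
\]
exactly as in \eqref{eq:de45}. For $\xi=\tz_{k}-z_{k-1}$ I substitute $v=y_{k}-y_{k-1}$, so $Bv=p_{k}$ by \eqref{def:pq}, and $w=\tilde{\gamma}_{k}-\gamma_{k-1}=\beta p_{k}+q_{k}$ by the first identity in \eqref{gtk:gk}. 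The $G$- and $H$-terms produce $\Norm{\tx_{k}-x_{k-1}}_{G}^{2}+\Norm{y_{k}-y_{k-1}}_{H}^{2}$, and the remaining three terms, after substituting $w=\beta p_{k}+q_{k}$ and collecting coefficients of $\Norm{p_{k}}^{2}$, $\Inner{p_{k}}{q_{k}}$ and $\Norm{q_{k}}^{2}$, must be shown to equal $a_{k}$. For $\xi=\tz_{k}-z_{k}$ the $H$-term and the $\Norm{Bv}^2$ and cross terms all vanish since $v=0$, leaving $\Norm{\tx_{k}-x_{k}}_{G}^{2}+\frac{1}{(\tau+\theta)\beta}\Norm{\tilde{\gamma}_{k}-\gamma_{k}}^{2}$; substituting $\tilde{\gamma}_{k}-\gamma_{k}=(1-\tau)\beta p_{k}+(1-\tau-\theta)q_{k}$ from the second identity in \eqref{gtk:gk} and expanding the square gives precisely $b_{k}$.

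The computation is essentially bookkeeping, so the only thing requiring care is the algebraic identity that the $p_k$/$q_k$-quadratic form coming from the last three terms of $\Inner{M\xi}{\xi}$ at $\xi=\tz_k-z_{k-1}$ coincides with $a_k$. Writing $c_1=\frac{(\tau-\tau\theta+\theta)\beta}{\tau+\theta}$, $c_2=-\frac{2\tau}{\tau+\theta}$, $c_3=\frac{1}{(\tau+\theta)\beta}$ and $w=\beta p_k+q_k$, the coefficient of $\Norm{p_k}^2$ is $c_1+c_2\beta+c_3\beta^2 = \frac{(\tau-\tau\theta+\theta)\beta}{\tau+\theta}-\frac{2\tau\beta}{\tau+\theta}+\frac{\beta}{\tau+\theta}=\frac{(1-\tau)(1+\theta)\beta}{\tau+\theta}$, the coefficient of $\Inner{p_k}{q_k}$ is $c_2+2c_3\beta=-\frac{2\tau}{\tau+\theta}+\frac{2}{\tau+\theta}=\frac{2(1-\tau)}{\tau+\theta}$, and the coefficient of $\Norm{q_k}^2$ is $c_3=\frac{1}{(\tau+\theta)\beta}$, matching $a_k$ exactly. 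An analogous but simpler collection, with $w=(1-\tau)\beta p_k+(1-\tau-\theta)q_k$ and only the $c_3\Norm{w}^2$ term surviving, yields $b_k$. I do not expect any genuine obstacle here; the proof is complete once these substitutions are carried out and the coefficients are matched, so I would present it as a short direct computation invoking Lemma~\ref{rel:tilde} and the definition of $M$.
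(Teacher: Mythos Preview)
Your proposal is correct and follows essentially the same route as the paper: both expand $\Inner{M\xi}{\xi}$ using the block structure of $M$ (cf.\ \eqref{eq:de45}), substitute the identities from Lemma~\ref{rel:tilde} for the $\gamma$-components, and then collect coefficients of $\Norm{p_k}^2$, $\Inner{p_k}{q_k}$, $\Norm{q_k}^2$ to obtain $a_k$ and $b_k$. The only cosmetic difference is that the paper writes out an intermediate quantity $\tilde a_k$ before simplifying, whereas you organize the same simplification via the constants $c_1,c_2,c_3$.
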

\begin{proof}
It follows from \eqref{def:ztz} and the first equality in \eqref{gtk:gk}  that  
\begin{gather*}
\Norm{\tz_{k}-z_{k-1}}_{M}^{2}=
\Norm{\left(\tx_{k}-x_{k-1},y_{k}-y_{k-1},\beta p_{k}+ q_{k}\right)}_{M}^{2}.
\end{gather*}
Hence, using \eqref{def:oper} and \eqref{def:pq}, we find
\begin{equation*}
\Norm{\tz_{k}-z_{k-1}}_{M}^{2}= \Norm{\tx_{k}-x_{k-1}}_{G}^{2}+\Norm{y_{k}-y_{k-1}}_{H}^{2}+ \tilde{a}_{k}
\end{equation*}
where 
\begin{equation*}
\tilde{a}_{k}:=\frac{(\tau-\tau\theta+\theta)\beta}{\tau+\theta}\Norm{p_{k}}^{2}-\frac{2\tau}{\tau+\theta}\Inner{p_{k}}{\beta p_{k}+q_{k}}+\frac{1}{(\tau+\theta)\beta}\Norm{\beta p_{k}+q_{k}}^{2}.
\end{equation*}
By developing the right-hand side of the last expression, we have 
\begin{align*}
\tilde{a}_{k}&=
\frac{(\tau-\tau\theta+\theta-2\tau+1)\beta}{\tau+\theta}\Norm{p_{k}}^{2}-\frac{2\tau-2}{\tau+\theta}\Inner{p_{k}}{q_{k}}+\frac{1}{(\tau+\theta)\beta}\Norm{q_{k}}^{2}\\
&=\frac{(1-\tau)(1+\theta)\beta}{\tau+\theta}\Norm{p_{k}}^{2}+\frac{2(1-\tau)}{\tau+\theta}\Inner{p_{k}}{q_{k}}+\frac{1}{(\tau+\theta)\beta}\Norm{q_{k}}^{2}=a_{k}.
\end{align*}
Therefore, the first equation in \eqref{eq:f456} follows. Now, using \eqref{def:ztz}, \eqref{def:pq}, the second equality in \eqref{gtk:gk},  and the definition of $M$ in \eqref{def:oper}, we obtain
\begin{align*}
\Norm{\tz_{k}-z_{k}}_{M}^{2}&=\Norm{\left(\tx_{k}-x_{k},0,(1-\tau)\beta p_{k}+(1-\tau-\theta)q_{k}\right)}_{M}^{2}\\
&=\Norm{\tx_{k}-x_{k-1}}_{G}^{2}+\frac{1}{(\tau+\theta)\beta}\Norm{(1-\tau)\beta p_{k}+(1-\tau-\theta)q_{k}}^{2},
\end{align*}
which is equivalent to  the second equation in \eqref{eq:f456}.
\end{proof}

Before proving  the inequality in  \eqref{eq:bn45}, we establish  some other relations satisfied by the sequences generated by  
Algorithm~\ref{alg:in:sy}. To do this, we consider the following constant 
\begin{equation}\label{defd_0}
d_{0}=\inf\left\{\Norm{z^{\ast}-z_{0}}_{M}^{2}\,:\, z^{\ast}\in T^{-1}(0)\right\},
\end{equation} 
where $M$, $T$ and $z_{0}$ are as in \eqref{def:oper} and \eqref{def:ztz}. 
Note that, if $M$ is positive definite, then $d_{0}$ measures the squared distance  in the norm $\norm{\cdot}_{M}$ of the initial point $z_{0}=(x_{0},y_{0},\gamma_{0})$ to the solution set of \eqref{optl}.

\begin{lemma} \label{cond:ang}
Let  $\{p_k\}$,  $\{q_k\}$ and $d_0$ be  as in \eqref{def:pq} and \eqref{defd_0}. Then, the following  hold:\\
(a) 
\begin{equation*}
\min\left\{2\vartheta\Inner{p_{1}}{q_{1}},-\Norm{y_{1}-y_{0}}_{H}^{2}\right\}\geq-4d_{0}, 
\end{equation*}
where $\vartheta$ is as in \eqref{def:vart}.\\
(b) for every $k\geq 2$, we have
\begin{equation*}
2(1+\tau)\Inner{p_{k}}{q_{k}}\geq 2(1-\theta)\Inner{p_{k}}{q_{k-1}}- 2\tau\beta\Norm{p_{k}}^{2}+ \Norm{y_{k}-y_{k-1}}_{H}^{2}-\Norm{y_{k-1}-y_{k-2}}_{H}^{2}.
\end{equation*}
\end{lemma}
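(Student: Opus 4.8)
I would derive this from the optimality condition of the $y$–subproblem at two consecutive iterations. Writing $s_j:=\gamma_{j-\frac12}-\beta(A\tilde x_j+By_j-b)$, the inclusion \eqref{aux1:sub_y} reads $B^{\ast}s_j-H(y_j-y_{j-1})\in\partial g(y_j)$; applying it at $j=k$ and $j=k-1$ (legitimate since $k\ge 2$), using monotonicity of $\partial g$ on the pair $(y_k,y_{k-1})$, and recalling $p_k=B(y_k-y_{k-1})$, one gets
\[
\langle s_k-s_{k-1},p_k\rangle\ \ge\ \|y_k-y_{k-1}\|_H^2-\langle H(y_{k-1}-y_{k-2}),y_k-y_{k-1}\rangle .
\]
The one computation to carry out is $s_k-s_{k-1}$: a short manipulation of \eqref{mult_1} (together with $A\tilde x_k+By_{k-1}-b=-q_k/\beta-p_k$) gives $s_k=\gamma_{k-1}+\tau\beta p_k+(1+\tau)q_k$, so that, invoking $\gamma_{k-1}-\gamma_{k-2}=\tau\beta p_{k-1}+(\tau+\theta)q_{k-1}$ from Lemma~\ref{rel:tilde}, one obtains $s_k-s_{k-1}=\tau\beta p_k+(1+\tau)q_k-(1-\theta)q_{k-1}$. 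Substituting this, bounding the cross term by Young's inequality, $\langle H(y_{k-1}-y_{k-2}),y_k-y_{k-1}\rangle\le\tfrac12\|y_{k-1}-y_{k-2}\|_H^2+\tfrac12\|y_k-y_{k-1}\|_H^2$, then multiplying by $2$ and rearranging yields (b).

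\textbf{Part (a).} Here the plan is to specialize the earlier auxiliary results to $k=1$ and compare with $\|z^{\ast}-z_0\|_M^2$. First, combining the $k=1$ case of Lemma~\ref{cor:aux2} with Lemma~\ref{lm/abc}, the error condition \eqref{cond:inex2} written via \eqref{mult_2} as $\|\tilde x_1-x_1\|_G^2=\|\tilde x_1-x_0+G^{-1}u_1\|_G^2\le\tfrac{\tilde\sigma}{\beta}\|\tilde\gamma_1-\gamma_0\|^2+\hat\sigma\|\tilde x_1-x_0\|_G^2$, and the identity (obtained from $\tilde\gamma_1-\gamma_0=\beta p_1+q_1$, Lemma~\ref{rel:tilde})
\[
\Phi:=a_1-b_1-\tfrac{\tilde\sigma}{\beta}\|\tilde\gamma_1-\gamma_0\|^2=(1-\tau-\tilde\sigma)\beta\|p_1\|^2+2(1-\tau-\tilde\sigma)\langle p_1,q_1\rangle+\tfrac{2-\tau-\theta-\tilde\sigma}{\beta}\|q_1\|^2 ,
\]
I would obtain, for every $z^{\ast}\in T^{-1}(0)$, the inequality $\|z^{\ast}-z_1\|_M^2+\|y_1-y_0\|_H^2+\Phi\le\|z^{\ast}-z_0\|_M^2$ (the nonnegative term $(1-\hat\sigma)\|\tilde x_1-x_0\|_G^2$ being simply dropped). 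Then, from the elementary estimate $\|z_1-z_0\|_M^2\le 2\|z_1-z^{\ast}\|_M^2+2\|z^{\ast}-z_0\|_M^2$ (valid because $M$ is positive semidefinite), I conclude $\|z_1-z_0\|_M^2+2\|y_1-y_0\|_H^2+2\Phi\le 4\|z^{\ast}-z_0\|_M^2$.

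The last ingredient is an explicit formula for $\|z_1-z_0\|_M^2$. Since $x_1-x_0=-G^{-1}u_1$, $B(y_1-y_0)=p_1$ and $\gamma_1-\gamma_0=\tau\beta p_1+(\tau+\theta)q_1$ (Lemma~\ref{rel:tilde}), expanding with $M$ from \eqref{def:oper} makes the $\langle p_1,q_1\rangle$ contributions cancel and, using $\tau-\tau\theta+\theta-\tau^2=(1-\tau)(\tau+\theta)$, yields $\|z_1-z_0\|_M^2=\|x_1-x_0\|_G^2+\|y_1-y_0\|_H^2+(1-\tau)\beta\|p_1\|^2+\tfrac{\tau+\theta}{\beta}\|q_1\|^2$. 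Plugging this in, together with the algebraic identity $(1-\tau)\beta\|p_1\|^2+\tfrac{\tau+\theta}{\beta}\|q_1\|^2+2\Phi=\|(p_1,q_1)\|_Q^2$ (with $Q$ as in \eqref{eq:l90}), collapses the previous bound to $\|x_1-x_0\|_G^2+3\|y_1-y_0\|_H^2+\|(p_1,q_1)\|_Q^2\le 4\|z^{\ast}-z_0\|_M^2$. Taking the infimum over $z^{\ast}\in T^{-1}(0)$ gives $3\|y_1-y_0\|_H^2+\|(p_1,q_1)\|_Q^2\le 4d_0$, whence $\|y_1-y_0\|_H^2\le\tfrac43 d_0\le 4d_0$, while Proposition~\ref{pr:ang} applied to $(y,\gamma)=(p_1,q_1)$ gives $2\vartheta\langle p_1,q_1\rangle\ge-\|(p_1,q_1)\|_Q^2\ge-4d_0$. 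These two facts are exactly statement (a).

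I expect the bookkeeping in (a) to be the delicate part: one must track the cancellation in $\|z_1-z_0\|_M^2$ and the identity linking $\Phi$, the $Q$–seminorm, and the residual $\|p_1\|^2,\|q_1\|^2$ terms produced by the $2\|\cdot\|_M^2$ splitting — these are precisely what make the clean bound $4d_0$ (rather than a $\tau,\theta$–dependent constant) come out. Part (b), by contrast, is a short monotonicity-plus-Young computation.
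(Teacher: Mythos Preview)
Your proposal is correct and follows essentially the same route as the paper. Part~(b) is identical in spirit: the paper writes $s_k=\gamma_k+(1-\theta)q_k$ rather than your $s_k=\gamma_{k-1}+\tau\beta p_k+(1+\tau)q_k$, but these are the same quantity, and the monotonicity-plus-Young argument proceeds verbatim. For Part~(a) the paper combines Lemma~\ref{cor:aux2}, Lemma~\ref{lm/abc}, the error condition, the expansion of $\|z_1-z_0\|_M^2$, and Proposition~\ref{pr:ang} exactly as you do; the only cosmetic difference is that the paper discards the $\|y_1-y_0\|_H^2$ term when lower-bounding $\|\tilde z_1-z_0\|_M^2-\|\tilde z_1-z_1\|_M^2$ and the $\|x_1-x_0\|_G^2$ term when upper-bounding $\|z_1-z_0\|_M^2$, arriving at $\|y_1-y_0\|_H^2+\|(p_1,q_1)\|_Q^2\le 4\|z^\ast-z_0\|_M^2$, whereas you retain them and obtain the (harmlessly sharper) $\|x_1-x_0\|_G^2+3\|y_1-y_0\|_H^2+\|(p_1,q_1)\|_Q^2\le 4\|z^\ast-z_0\|_M^2$.
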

\begin{proof}
$(a)$  From  \eqref{gtk:gk2} with $k=1$, 
we have $\gamma_{1}-\gamma_{0}=\tau\beta p_{1}+(\tau+\theta)q_{1}$. Then, using \eqref{def:ztz} (with $k=1$) and the definition of $M$ in \eqref{def:oper}, we find 
\begin{align} 
\Norm{z_{1}-z_{0}}_{M}^{2}=\Norm{\left(x_{1}-x_{0},y_{1}-y_{0},\gamma_{1}-\gamma_{0}\right)}_{M}^{2}=\Norm{x_{1}-x_{0}}_{G}^{2}+\Norm{y_{1}-y_{0}}_{H}^{2} + c_{1}, \label{eq:lu8} 
\end{align}
where
\begin{align}   \nonumber
c_{1}&:=\frac{\left(\tau-\tau\theta+\theta\right)\beta}{\tau+\theta}\Norm{p_{1}}^{2}-\frac{2\tau}{\tau+\theta}\Inner{p_{1}}{\gamma_{1}-\gamma_{0}}
+\frac{1}{(\tau+\theta)\beta}\Norm{\gamma_{1}-\gamma_{0}}^{2}\\
&=\frac{\left(\tau-\tau\theta+\theta-\tau^{2}\right)\beta}{\tau+\theta}\Norm{p_{1}}^{2}+\frac{\tau+\theta}{\beta}\Norm{q_{1}}^{2}
=\left(1-\tau\right)\beta\Norm{p_{1}}^{2}+\frac{\tau+\theta}{\beta}\Norm{q_{1}}^{2}.\label{defc1}
\end{align}
Let $z^{\ast}=\left(x^{\ast},y^{\ast},\gamma^{\ast}\right)$ be an arbitrary solution of \eqref{sist:lag}, i.e., $z^{\ast}\in T^{-1}(0)$ with $T$ as in \eqref{def:oper}. Hence,  it follows from \eqref{eq:lu8}  and the fact that $\Norm{z-z'}_{M}^{2}\leq 2(\Norm{z}_{M}^{2}+\Norm{z'}_{M}^{2})$, for all $z,z'$, that 
\begin{equation}\label{ax2:ag}
c_{1}+\Norm{y_{1}-y_{0}}_{H}^{2}\leq 
\Norm{z_{1}-z_{0}}_{M}^{2}\leq 2\left(\Norm{z^{\ast}-z_{1}}_{M}^{2}+\Norm{z^{\ast}-z_{0}}_{M}^{2}\right).
\end{equation}
On the other hand, it follows from \eqref{cond:inex2} with $k=1$ and the definition of $x_{1}$ in \eqref{mult_2} that
\begin{equation*}
\Norm{\tx_{1}-x_{1}}_{G}^{2}\leq\hat{\sigma}\Norm{\tx_{1}-x_{0}}_{G}^{2}+\frac{\tilde{\sigma}}{\beta}\Vert\tilde{\gamma}_{1}-\gamma_{0}\Vert^{2}\leq
\Norm{\tx_{1}-x_{0}}_{G}^{2}+\frac{\tilde{\sigma}}{\beta}\Vert\tilde{\gamma}_{1}-\gamma_{0}\Vert^{2}, 
\end{equation*}
where, in the second inequality, we use that  $\hat{\sigma}<1$. 
Thus, using the first identity in \eqref{gtk:gk} with $k=1$, we obtain 
\begin{equation*}
\Norm{\tx_{1}-x_{0}}_{G}^{2}-\Norm{\tx_{1}-x_{1}}_{G}^{2}\geq
-\frac{\tilde{\sigma}}{\beta}\Norm{\beta p_{1}+ q_{1}}^{2}.
\end{equation*}
This inequality, together with Lemma~\ref{cor:aux2} and Lemma~\ref{lm/abc} (with $k=1$), implies that 
\begin{align*}
\Norm{z^{\ast}-z_{0}}_{M}^{2}-\Norm{z^{\ast}-z_{1}}_{M}^{2}&\geq 
\Norm{\tz_{1}-z_{0}}_{M}^{2}-\Norm{\tz_{1}-z_{1}}_{M}^{2}\\
&\geq\Norm{\tx_{1}-x_{0}}_{G}^{2}-\Norm{\tx_{1}-x_{1}}_{G}^{2}+\frac{(1-\tau)\left(\tau+\theta\right)\beta}{\tau+\theta}\Norm{p_{1}}^{2}\\
&+\frac{2(1-\tau)(\tau+\theta)}{\tau+\theta}\Inner{p_{1}}{q_{1}}+\frac{1-\left[1-\left(\tau+\theta\right)\right]^{2}}{(\tau+\theta)\beta}\Norm{q_1}^{2}\\
&\geq\left(1-\tau-\tilde{\sigma}\right)\left[\beta\Norm{p_{1}}^{2}+2\Inner{p_{1}}{q_{1}}\right]+\frac{2-\tau-\theta-\tilde{\sigma}}{\beta}\Norm{q_1}^{2}.
\end{align*}
Combining this inequality with  \eqref{ax2:ag} and using the identity in \eqref{defc1}, we find 
\begin{align*}
4\Norm{z^{\ast}-z_{0}}_{M}^{2}
&\geq\Norm{y_{1}-y_{0}}_{H}^{2}+ \left(3-3\tau-2\tilde{\sigma}\right)\beta\Norm{p_{1}}^{2} + 4\left(1-\tau-\tilde{\sigma}\right)\Inner{p_{1}}{q_{1}} + \frac{4-\tau-\theta-2\tilde{\sigma}}{\beta}\Norm{q_{1}}^{2}\\
&=\Norm{y_{1}-y_{0}}_{H}^{2} + \Norm{\left(p_{1},q_{1}\right)}_{Q}^{2},
\end{align*}
where $Q$ is as in \eqref{eq:l90}. Hence, using Proposition~\ref{pr:ang} we conclude that
\begin{equation*}
\max\left\{-\vartheta\Inner{p_{1}}{q_{1}},\Norm{y_{1}-y_{0}}_{H}^{2}\right\}
\leq 4\Norm{z^{\ast}-z_{0}}_{M}^{2}. 
\end{equation*}  
Therefore, statement $(a)$ follows from the definition of $d_{0}$ in \eqref{defd_0}.
\\[2mm]
$(b)$ It follows from the definitions of $\gamma_{k}$  and $q_{k}$ in \eqref{mult_2} and \eqref{def:pq}, respectively,  that
\[
\gamma_{k-\frac{1}{2}}-\beta\left(A\tx_{k}+By_{k}-b\right)=
\gamma_{k}-\left(1-\theta\right)\beta\left(A\tx_{k}+By_{k}-b\right)= \gamma_{k}+\left(1-\theta\right)q_{k}.
\]
Hence, since $y_{k}$ is an optimal solution of \eqref{g_sub}, we obtain, for every $k\geq 1$,
\begin{align*}
0&\in\partial g(y_{k})-B^{\ast}\left[\gamma_{k-\frac{1}{2}}-\beta\left(A\tx_{k}+By_{k}-b\right)\right] +H\left(y_{k}-y_{k-1}\right)\\ 
&=\partial g(y_{k})-B^{\ast}\left[\gamma_{k}+\left(1-\theta\right)q_{k}\right] + H\left(y_{k}-y_{k-1}\right).
\end{align*} 
Thus, the monotonicity of  $\partial g$ and the definition of $p_{k}$ in \eqref{def:pq} imply that, for eve-ry~$k\geq ~2$,
\begin{align*}
0&\leq\Inner{\gamma_{k}-\gamma_{k-1}+\left(1-\theta\right)\left(q_{k}-q_{k-1}\right)}{p_{k}}-\Norm{y_{k} -y_{k-1}}_{H}^{2}
+\Inner{H\left(y_{k-1} -y_{k-2}\right)}{y_{k} -y_{k-1}}\\
&=\Inner{\tau\beta p_{k} +\left(1+\tau\right)q_{k}-\left(1-\theta\right)q_{k-1}}{p_{k}}-\Norm{y_{k} -y_{k-1}}_{H}^{2}
+\Inner{H\left(y_{k-1} -y_{k-2}\right)}{y_{k} -y_{k-1}}\\
&\leq \tau\beta\Norm{p_{k}}^{2}+\left(1+\tau\right)\Inner{p_{k}}{q_{k}}-\left(1-\theta\right)\Inner{p_{k}}{q_{k-1}}-\frac{1}{2}\Norm{y_{k} -y_{k-1}}_{H}^{2}+\frac{1}{2}\Norm{y_{k-1} -y_{k-2}}_{H}^{2},
\end{align*}
where the second equality is due to  \eqref{gtk:gk2} and the last inequality is due to the fact that $2\Inner{Hy}{y^{\prime}}\leq \Norm{y}_{H}^{2}+\Norm{y^{\prime}}_{H}^{2}$ for all $y,y^{\prime}\in\mathbb{R}^p$.  Therefore, the desired inequality follows immediately from the last one.
\end{proof}

With the above propositions and lemmas, we  now  prove   the inequality in  \eqref{eq:bn45}.

\begin{theorem}\label{inq:hpe}
Let $\{z_{k}\}$, $\{\tz_{k}\}$ and $\{q_k\}$ be as in \eqref{def:ztz} and \eqref{def:pq} and assume that    
$\sigma\in[\hat{\sigma},1)$ is given by Proposition~\ref{lm:coef}. Consider the sequence  $\{\eta_{k}\}$ defined by  
\begin{equation}\label{def:eta}
\eta_{0}= \frac{4\left(1+\tau+\vartheta\right)\varphi\left(\sigma\right)}{\left(\tau+\theta\right)\left(1+\tau\right)\vartheta}d_{0}, \quad
\eta_{k}=\frac{\widetilde{\varphi}\left(\sigma\right)}{\left(\tau+\theta\right)\beta}\Norm{q_{k}}^{2}+\frac{\varphi\left(\sigma\right)}{\left(\tau+\theta\right)\left(1+\tau\right)}\Norm{y_{k}-y_{k-1}}_{H}^{2},\quad \forall\,k\geq 1,
\end{equation}
where  $\vartheta$, $d_{0}$, $\varphi$ and $\widetilde{\varphi}$ are as in \eqref{def:vart}, \eqref{defd_0}, \eqref{ph} and \eqref{ph:til}, respectively. 
Then, for every $k\geq 1$,  
\begin{equation}\label{pr:inq}
\Norm{\tz_{k}-z_{k}}_{M}^{2}+\eta_{k}\leq \sigma\Norm{\tz_{k}-z_{k-1}}_{M}^{2}+\eta_{k-1}, 
\end{equation}
where $M$ is as in \eqref{def:oper}.
\end{theorem}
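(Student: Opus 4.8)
The plan is to establish \eqref{pr:inq} by reducing it, via Lemma~\ref{lm/abc} and the relative error condition~\eqref{cond:inex2}, to a single scalar inequality in the auxiliary quantities $p_{k},q_{k},q_{k-1}$ and the $H$-seminorms of consecutive differences of $\{y_{k}\}$, and then verifying that inequality through successive completions of squares whose feasibility is guaranteed precisely by Propositions~\ref{pr:ang} and~\ref{lm:coef} and by Lemma~\ref{cond:ang}. To begin, Lemma~\ref{lm/abc} gives $\Norm{\tz_{k}-z_{k-1}}_{M}^{2}=\Norm{\tx_{k}-x_{k-1}}_{G}^{2}+\Norm{y_{k}-y_{k-1}}_{H}^{2}+a_{k}$ and $\Norm{\tz_{k}-z_{k}}_{M}^{2}=\Norm{\tx_{k}-x_{k}}_{G}^{2}+b_{k}$; moreover, combining $x_{k}=x_{k-1}-G^{-1}u_{k}$ with \eqref{cond:inex2} and the first identity of~\eqref{gtk:gk} yields $\Norm{\tx_{k}-x_{k}}_{G}^{2}\le\hat{\sigma}\Norm{\tx_{k}-x_{k-1}}_{G}^{2}+\frac{\tilde{\sigma}}{\beta}\Norm{\beta p_{k}+q_{k}}^{2}$. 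Substituting these into \eqref{pr:inq} and using $\sigma\ge\hat{\sigma}$ to cancel $\hat{\sigma}\Norm{\tx_{k}-x_{k-1}}_{G}^{2}$ against $\sigma\Norm{\tx_{k}-x_{k-1}}_{G}^{2}$, it suffices to prove, for every $k\ge1$,
\begin{equation*}
\frac{\tilde{\sigma}}{\beta}\Norm{\beta p_{k}+q_{k}}^{2}+b_{k}+\eta_{k}\le\sigma\Norm{y_{k}-y_{k-1}}_{H}^{2}+\sigma a_{k}+\eta_{k-1}.
\end{equation*}

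For $k\ge2$, I would substitute \eqref{def:eta} into this inequality, bring everything to one side, and invoke Lemma~\ref{cond:ang}(b) multiplied by the nonnegative constant $\varphi(\sigma)/[(\tau+\theta)(1+\tau)]$ (nonnegativity of $\varphi(\sigma)$ coming from Proposition~\ref{lm:coef}) in order to eliminate $\Norm{y_{k-1}-y_{k-2}}_{H}^{2}$; after discarding the nonnegative term $\sigma\Norm{y_{k}-y_{k-1}}_{H}^{2}$, what remains is a quadratic form in $(p_{k},q_{k},q_{k-1})$. Since the coefficient of $\Norm{q_{k-1}}^{2}$ in it equals $\widetilde{\varphi}(\sigma)/[(\tau+\theta)\beta]>0$ (Proposition~\ref{lm:coef}), I would complete the square in $q_{k-1}$ and drop it, obtaining a quadratic form in $(p_{k},q_{k})$ only. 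A direct computation with the definitions~\eqref{coef} then shows that its coefficients of $\Norm{q_{k}}^{2}$ and of $\Inner{p_{k}}{q_{k}}$ both vanish — this relies on $\sigma-(1-\tau-\theta)^{2}-\tilde{\sigma}(\tau+\theta)=\widetilde{\varphi}(\sigma)$ and on $(1-\tau)(\sigma-1+\tau+\theta)-\tilde{\sigma}(\tau+\theta)=\varphi(\sigma)$ — while its coefficient of $\Norm{p_{k}}^{2}$ equals $\beta\,\overline{\varphi}(\sigma)/[(\tau+\theta)(1+\tau)^{2}\widetilde{\varphi}(\sigma)]$, where one uses $(1-\tau)(1+\theta)\sigma-(1-\tau)^{2}-\tilde{\sigma}(\tau+\theta)=\widehat{\varphi}(\sigma)$ to recognize $\overline{\varphi}(\sigma)$ after factoring. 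This last coefficient is nonnegative by Proposition~\ref{lm:coef}, so the quadratic form is nonnegative and the scalar inequality holds for $k\ge2$.

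For $k=1$ the same scheme applies, except that the term $\Norm{y_{0}-y_{-1}}_{H}^{2}$ is absent, so in place of Lemma~\ref{cond:ang}(b) I would use Lemma~\ref{cond:ang}(a). After the reduction, proving the scalar inequality amounts to showing
\begin{equation*}
0\le\frac{\beta\widehat{\varphi}(\sigma)}{\tau+\theta}\Norm{p_{1}}^{2}+\frac{2\varphi(\sigma)}{\tau+\theta}\Inner{p_{1}}{q_{1}}+\left(\sigma-\frac{\varphi(\sigma)}{(\tau+\theta)(1+\tau)}\right)\Norm{y_{1}-y_{0}}_{H}^{2}+\eta_{0},
\end{equation*}
and here Lemma~\ref{cond:ang}(a) supplies $\Inner{p_{1}}{q_{1}}\ge-2d_{0}/\vartheta$ and $\Norm{y_{1}-y_{0}}_{H}^{2}\le4d_{0}$ (with $\vartheta>0$ from Proposition~\ref{pr:ang}); together with $\widehat{\varphi}(\sigma)\ge0$ this reduces the claim to the elementary identity $-\tfrac{1}{\vartheta}-\tfrac{1}{1+\tau}+\tfrac{1+\tau+\vartheta}{(1+\tau)\vartheta}=0$, which is exactly the reason $\eta_{0}$ is defined as in~\eqref{def:eta}.

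The main obstacle is the bookkeeping. One must track the several sources that feed the coefficients of $\Norm{p_{k}}^{2}$, $\Inner{p_{k}}{q_{k}}$ and $\Norm{q_{k}}^{2}$ — the three bilinear forms $a_{k}$, $b_{k}$, $\Norm{\beta p_{k}+q_{k}}^{2}$, the contribution of $\eta_{k-1}-\eta_{k}$, the $\varphi(\sigma)$-weighted copy of Lemma~\ref{cond:ang}(b), and the Schur complement produced by eliminating $q_{k-1}$ — and check that they collapse exactly onto $\varphi,\widehat{\varphi},\widetilde{\varphi},\overline{\varphi}$ evaluated at $\sigma$, with every sign and every power of $\beta$ in place. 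No single identity is deep, but they must all conspire; in fact the definitions of $\eta_{k}$ and of $\eta_{0}$ in~\eqref{def:eta} are reverse-engineered so that the $H$-seminorm terms telescope through Lemma~\ref{cond:ang}(b) and the $d_{0}$ term arising from Lemma~\ref{cond:ang}(a) cancels exactly.
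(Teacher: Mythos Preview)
Your proposal is correct and follows essentially the same route as the paper: the paper also expands via Lemma~\ref{lm/abc}, uses \eqref{cond:inex2} together with \eqref{gtk:gk} to reach the key inequality~\eqref{two:case} (which is exactly your reduction after observing that the $\|q_{k}\|^{2}$ and $\Inner{p_{k}}{q_{k}}$ coefficients collapse to $\widetilde{\varphi}(\sigma)$ and $\varphi(\sigma)$), and then treats $k=1$ with Lemma~\ref{cond:ang}(a) and $k\ge2$ with Lemma~\ref{cond:ang}(b) followed by the $2\times2$ positive-semidefiniteness check $ac-b^{2}=\overline{\varphi}(\sigma)/[(\tau+\theta)^{2}(1+\tau)^{2}]\ge0$, which is precisely your completion of the square in $q_{k-1}$. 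The only cosmetic difference is that the paper notes the $q_{k}$-terms cancel \emph{before} passing to the quadratic in $(p_{k},q_{k-1})$, whereas you carry them along and record their vanishing afterwards.
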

\begin{proof}
It follows from Lemma~\ref{lm/abc} that
\begin{align} \nonumber
\sigma\Norm{\tz_{k}-z_{k-1}}_{M}^{2}-\Norm{\tz_{k}-z_{k}}_{M}^{2}&=\sigma\Norm{\tx_{k}-x_{k-1}}_{G}^{2}-\Norm{\tx_{k}-x_{k}}_{G}^{2}\\\nonumber
&+\sigma\Norm{y_{k}-y_{k-1}}_{H}^{2}
+\frac{\left(1-\tau\right)\left[\left(1+\theta\right)\sigma-\left(1-\tau\right)\right]\beta}{\tau+\theta}\Norm{p_{k}}^{2}\\
&+\frac{2\left(1-\tau\right)\left[\sigma-\left(1-\tau-\theta\right)\right]}{\tau+\theta}\Inner{p_{k}}{q_{k}}
+\frac{\sigma-\left(1-\tau-\theta\right)^2}{\left(\tau+\theta\right)\beta}\Norm{q_{k}}^{2}.\label{da:34}
\end{align}
Using the inequality in \eqref{cond:inex2}, the definition of $x_{k}$ in \eqref{mult_2} and noting that  $\sigma\geq \hat{\sigma}$,  we obtain 
\begin{align*}
\sigma\Norm{\tx_{k}-x_{k-1}}_{G}^{2}-\Norm{\tx_{k}-x_{k}}_{G}^{2}
\geq -\frac{\tilde{\sigma}}{\beta}\Norm{\tilde{\gamma}_{k}-\gamma_{k-1}}^{2}
= -\tilde{\sigma}\beta\Norm{p_{k}}^{2}-2\tilde{\sigma}\Inner{p_{k}}{q_{k}}-\frac{\tilde{\sigma}}{\beta}\Norm{q_{k}}^{2}
\end{align*}
where  the last equality  is due to the first expression in \eqref{gtk:gk}. Combining the last inequality with \eqref{da:34} and definitions in \eqref{coef}, we find
\begin{align}\label{two:case}
\sigma\Norm{\tz_{k}-z_{k-1}}_{M}^{2}&-\Norm{\tz_{k}-z_{k}}_{M}^{2}
\geq\frac{\widehat{\varphi}(\sigma)\beta}{\tau+\theta}\Norm{p_{k}}^{2} +\frac{2 \varphi\left(\sigma\right)}{\tau+\theta}\Inner{p_{k}}{q_{k}}
+\frac{\widetilde{\varphi}\left(\sigma\right)}{\left(\tau+\theta\right)\beta}\Norm{q_{k}}^{2}.
\end{align}
Let us now consider two cases: $k=1$ and $k\geq 2$.
\\[2mm]
Case 1 ($k=1$): From   \eqref{two:case} with $k=1$,    Lemma~\ref{cond:ang}$(a)$  and the fact that $\varphi(\sigma)\geq 0$, we have  
\begin{equation*}
\sigma\Norm{\tz_{1}-z_{0}}_{M}^{2}-\Norm{\tz_{1}-z_{1}}_{M}^{2}
\geq \frac{\widehat{\varphi}(\sigma)\beta}{\tau+\theta}\Norm{p_{1}}^{2}-\frac{4\varphi\left(\sigma\right)}{\left(\tau+\theta\right)\vartheta}d_{0}+ \frac{\widetilde{\varphi}\left(\sigma\right)}{\left(\tau+\theta\right)\beta}\Norm{q_{1}}^{2}. 
\end{equation*}
Hence, in view of the definitions of $\eta_{0}$ and $\eta_{1}$ in \eqref{def:eta},  we conclude that
\begin{align*}
\sigma\Norm{\tz_{1}-z_{0}}_{M}^{2}&-\Norm{\tz_{1}-z_{1}}_{M}^{2}+\eta_{0}-\eta_{1}
\geq \frac{\widehat{\varphi}(\sigma)\beta}{\tau+\theta}\Norm{p_{1}}^{2}
+\frac{\varphi\left(\sigma\right)}{\left(\tau+\theta\right)\left(1+\tau\right)}\left[4d_{0}-\Norm{y_{1}-y_{0}}_{H}^{2}\right]\geq 0,
\end{align*}
where  the last inequality is due to Lemma~\ref{cond:ang}$(a)$ and Proposition~\ref{lm:coef}. This implies that \eqref{pr:inq} holds for $k=1$.
\\[2mm]
Case 2 ($k\geq2$): It follows from Lemma~\ref{cond:ang}$(b)$ and \eqref{two:case} that
\begin{align*}
&\sigma\Norm{\tz_{k}-z_{k-1}}_{M}^{2}-\Norm{\tz_{k}-z_{k}}_{M}^{2}\geq\frac{\widehat{\varphi}(\sigma)\beta}{\tau+\theta}\Norm{p_{k}}^{2}+\frac{\widetilde{\varphi}\left(\sigma\right)}{\left(\tau+\theta\right)\beta}\Norm{q_{k}}^{2}\\
&+\frac{\varphi\left(\sigma\right)}{\left(\tau+\theta\right)\left(1+\tau\right)}
\left[2\left(1-\theta\right)\Inner{p_{k}}{q_{k-1}}- 2\tau\beta\Norm{p_{k}}^{2}+\Norm{y_{k}-y_{k-1}}_{H}^{2}-\Norm{y_{k-1}-y_{k-2}}_{H}^{2}\right],
\end{align*}
which, combined with the definition of $\eta_{k}$  in \eqref{def:eta}, yields 
\begin{align*}
\sigma\Norm{\tz_{k}-z_{k-1}}_{M}^{2}-\Norm{\tz_{k}-z_{k}}_{M}^{2} +\eta_{k-1}-\eta_{k}&\geq\frac{\left[\left(1+\tau\right)\widehat{\varphi}\left(\sigma\right)-2\tau\varphi\left(\sigma\right)\right]\beta}{\left(\tau+\theta\right)\left(1+\tau\right)}\Norm{p_{k}}^{2} \\\noalign{\medskip}
&
+\frac{2\left(1-\theta\right)\varphi\left(\sigma\right)}{\left(\tau+\theta\right)\left(1+\tau\right)}\Inner{p_{k}}{q_{k-1}}+\frac{\widetilde{\varphi}\left(\sigma\right)}{\left(\tau+\theta\right)\beta}\Norm{q_{k-1}}^{2}.
\end{align*}
For simplicity, we define constants a, b, and c by   
\begin{equation*}
a=\frac{\left[\left(1+\tau\right)\widehat{\varphi}\left(\sigma\right)-2\tau\varphi\left(\sigma\right)\right]\beta}{\left(\tau+\theta\right)\left(1+\tau\right)}, \qquad
b=\frac{\left(1-\theta\right)\varphi\left(\sigma\right)}{\left(\tau+\theta\right)\left(1+\tau\right)}, \qquad
\mbox{and}\qquad
c=\frac{\widetilde{\varphi}\left(\sigma\right)}{\left(\tau+\theta\right)\beta}.
\end{equation*} 
Hence,
\begin{equation}\label{1ax:2cs}
\sigma\Norm{\tz_{k}-z_{k-1}}_{M}^{2}-\Norm{\tz_{k}-z_{k}}_{M}^{2} +\eta_{k-1}-\eta_{k} \geq a\Norm{p_{k}}^{2}-2b\Inner{p_{k}}{q_{k-1}}+c\Norm{q_{k-1}}^{2}.
\end{equation}
Now, note that 
\begin{align*}
ac-{b}^{2}=\frac{\left[\left(1+\tau\right)\widehat{\varphi}\left(\sigma\right)-2\tau\varphi\left(\sigma\right)\right]\left(1+\tau\right)\widetilde{\varphi}\left(\sigma\right)-\left(1-\theta\right)^{2}\left(\varphi\left(\sigma\right)\right)^{2}}{\left(\tau+\theta\right)^{2}\left(1+\tau\right)^{2}}
=\frac{\overline{\varphi}\left(\sigma\right)}{\left(\tau+\theta\right)^{2}\left(1+\tau\right)^{2}},
\end{align*}
where $\overline{\varphi}$ is given in \eqref{ph:bar}. Therefore,  it follows from   Proposition~\ref{lm:coef} that $c>0$ and  $ac-{b}^{2}\geq 0$, which, combined with \eqref{1ax:2cs},   implies   that \eqref{pr:inq} also holds for $k\geq 2$.
\end{proof}

\begin{remark}
If $\tau=0$ (resp.  $\theta=1$), then  Theorems \ref{pr:aux} and \ref{inq:hpe} correspond to   Lemma~3.1 and Theorem~3.3 in \cite{adona2018partially} (resp.  \cite[Proposition~1(a)]{adonaCOAP}). 
\end{remark}

\subsection{Pointwise and ergodic convergence rates of  Algorithm~\ref{alg:in:sy}}\label{sec:bound}

In this section, we establish pointwise and ergodic convergence rates for Algorithm~\ref{alg:in:sy}.

\begin{theorem}[Pointwise convergence rate of Algorithm~\ref{alg:in:sy}] \label{th:ptw} 
Consider the sequences $\{v_{k}\}$ and $\{w_{k}\}$ defined, for every $k\geq 1$, by
\begin{align}\label{1rsd}
v_{k}&=\left(H+\frac{\left(\tau-\tau\theta+\theta\right)\beta}{\tau+\theta} B^{\ast}B\right)\left(y_{k-1}-y_{k}\right)-\frac{\tau}{\tau+\theta}B^{\ast}\left(\gamma_{k-1}-\gamma_{k}\right),\\\label{2rsd}
w_{k}&=-\frac{\tau}{\tau+\theta}B\left(y_{k-1}-y_{k}\right)+\frac{1}{\left(\tau+\theta\right)\beta}\left(\gamma_{k-1}-\gamma_k\right).
\end{align}  
Then, for every $k\geq 1$,
\begin{equation}\label{1rsl}
u_{k}\in\partial f(\tilde x_k)-A^{\ast}\tilde{\gamma}_k, \qquad v_{k}\in \partial g(y_k)-B^{\ast}\tilde{\gamma}_k,\qquad
w_{k}= A\tilde{x}_{k}+By_{k}-b,
\end{equation}
and there exists $i\leq k$ such that 
\begin{equation}\label{2rsl}
\max\left\{\Norm{u_{i}},\Norm{v_{i}},\Norm{w_{i}}\right\}\leq\sqrt{\frac{2\lambda_{M}d_0\mathcal{C}_1}{k}}
\end{equation}
where $\mathcal{C}_1:= [1+\sigma+{8\left(1+\tau+\vartheta\right)\varphi\left(\sigma\right)}/({\left(\tau+\theta\right)\left(1+\tau\right)\vartheta})]/[1-\sigma]$, 
 $\lambda_{M}$ is the largest eigenvalue of the matrix $M$  defined in \eqref{def:oper}, $ \sigma\in[\hat{\sigma},1)$ is given by  Proposition~\ref{lm:coef}    and $\vartheta$, $\varphi$ and $d_0$ are as in \eqref{def:vart}, \eqref{ph} and \eqref{defd_0}, respectively.
\end{theorem}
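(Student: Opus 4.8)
The strategy is to leverage the HPE-type inequality \eqref{pr:inq} established in Theorem~\ref{inq:hpe}, together with the inclusion $M(z_{k-1}-z_k)\in T(\tz_k)$ from Theorem~\ref{pr:aux}. First I would verify the inclusions in \eqref{1rsl}: the first is exactly \eqref{1:incl} combined with $u_k = G(x_{k-1}-x_k)$ (which holds by the first identity in \eqref{mult_2}), the second is \eqref{2:incl} rewritten using the definition \eqref{1rsd} of $v_k$, and the third is \eqref{3:equ} rewritten using \eqref{2rsd}. These are direct translations, so no work is needed beyond matching notation.

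Next comes the quantitative estimate. Summing \eqref{pr:inq} over $k=1,\dots,N$ telescopes the $\eta$-terms and yields
$\sum_{k=1}^N\bigl(\Norm{\tz_k-z_{k-1}}_M^2-\Norm{\tz_k-z_k}_M^2\bigr)\ge (1-\sigma)^{-1}\cdot(\text{something});$
more precisely, rearranging \eqref{pr:inq} as $(1-\sigma)\Norm{\tz_k-z_k}_M^2 \le \sigma(\Norm{\tz_k-z_{k-1}}_M^2 - \Norm{\tz_k-z_k}_M^2) + \eta_{k-1}-\eta_k$ and then using Lemma~\ref{cor:aux2} to bound $\Norm{\tz_k-z_{k-1}}_M^2-\Norm{\tz_k-z_k}_M^2 \le \Norm{z^*-z_{k-1}}_M^2 - \Norm{z^*-z_k}_M^2$, summing over $k$, and taking the infimum over $z^*\in T^{-1}(0)$, I get $\sum_{k=1}^N \Norm{\tz_k-z_k}_M^2 \le (1-\sigma)^{-1}(\sigma\, d_0 + \eta_0)$, since $\eta_N\ge 0$ and $\eta_0$ is proportional to $d_0$ by \eqref{def:eta}. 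Plugging in $\eta_0 = \tfrac{4(1+\tau+\vartheta)\varphi(\sigma)}{(\tau+\theta)(1+\tau)\vartheta}d_0$ produces exactly $\sum_{k=1}^N\Norm{\tz_k-z_k}_M^2 \le \tfrac{1}{2}\mathcal{C}_1 d_0$ after absorbing the constants (the factor $2$ and the $1+\sigma$ in $\mathcal{C}_1$ arise from a slightly careful bookkeeping of the $\sigma$ and the $1$ coefficients — I would need to check whether one uses $\Norm{\tz_k-z_k}_M^2$ or a symmetrized combination, which is the one place the constant $\mathcal{C}_1$ gets its precise form).

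Then, since there are $N=k$ nonnegative terms summing to at most $\tfrac12\mathcal{C}_1 d_0$, there exists an index $i\le k$ with $\Norm{\tz_i-z_i}_M^2 \le \tfrac{\mathcal{C}_1 d_0}{2k}$. Now observe that $\tz_i - z_i = (\tx_i - x_i,\, 0,\, \tilde\gamma_i - \gamma_i)$ and, by Theorem~\ref{pr:aux}, $M(z_{i-1}-z_i)$ has components $(u_i, v_i, w_i)$ — indeed the first block of $M(z_{i-1}-z_i)$ is $G(x_{i-1}-x_i)=u_i$, the second is $v_i$ by \eqref{1rsd}, the third is $w_i$ by \eqref{2rsd}. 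To convert $\Norm{\tz_i-z_i}_M^2$ into a bound on $\Norm{(u_i,v_i,w_i)}$, I would write $\Norm{M(z_{i-1}-z_i)}^2 \le \lambda_M \Norm{z_{i-1}-z_i}_M^2$ (using $\langle M\xi, M\xi\rangle \le \lambda_M\langle M\xi,\xi\rangle$ for $M\succeq 0$), and then bound $\Norm{z_{i-1}-z_i}_M^2$ — but here is the subtlety: the iterate difference $z_{i-1}-z_i$ is not $\tz_i - z_i$. One uses $\Norm{z_{i-1}-z_i}_M \le \Norm{z_{i-1}-\tz_i}_M + \Norm{\tz_i - z_i}_M$ together with \eqref{pr:inq} (which controls $\Norm{\tz_i - z_{i-1}}_M^2$ in terms of $\Norm{\tz_i-z_i}_M^2$ plus the $\eta$ increment) to get $\Norm{z_{i-1}-z_i}_M^2 \le (1+\sigma^{-1}\text{-type constant})\Norm{\tz_i-z_i}_M^2 + \ldots$; this is precisely where the $1+\sigma$ in $\mathcal{C}_1$ comes from.

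\textbf{Main obstacle.} The genuinely delicate part is the chain $\Norm{(u_i,v_i,w_i)}^2 = \Norm{M(z_{i-1}-z_i)}^2 \le \lambda_M\Norm{z_{i-1}-z_i}_M^2 \le \lambda_M\cdot(\text{const})\cdot\Norm{\tz_i-z_i}_M^2$, i.e. relating the \emph{full} iterate gap $z_{i-1}-z_i$ (which drives the residual through $M$) back to the HPE gap $\tz_i-z_i$ that the telescoped sum actually controls. Getting the constant exactly equal to $\mathcal{C}_1$ requires combining \eqref{pr:inq}, Lemma~\ref{cor:aux2}, and the triangle inequality in the $M$-seminorm in just the right order, and keeping track of how $\eta_0\propto d_0$ feeds in. Everything else is bookkeeping. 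I would also note explicitly at the end that the bound \eqref{2rsl} has the announced $\mathcal{O}(1/\sqrt{k})$ form, giving the $\mathcal{O}(1/\rho^2)$ iteration-complexity claimed in the introduction.
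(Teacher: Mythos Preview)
Your treatment of the inclusions \eqref{1rsl} is fine and matches the paper. The gap is in the quantitative part, precisely at the step you flagged as the ``main obstacle'': you propose to first sum \eqref{pr:inq} to control $\sum_k\Norm{\tz_k-z_k}_M^2$, pick an index $i$ minimizing $\Norm{\tz_i-z_i}_M^2$, and \emph{then} convert to $\Norm{z_{i-1}-z_i}_M^2$ at that single index. But at that last step you invoke \eqref{pr:inq} in the wrong direction. Inequality \eqref{pr:inq} says
\[
\Norm{\tz_i-z_i}_M^2 \;\le\; \sigma\,\Norm{\tz_i-z_{i-1}}_M^2 + \eta_{i-1}-\eta_i,
\]
which upper-bounds $\Norm{\tz_i-z_i}_M^2$ in terms of $\Norm{\tz_i-z_{i-1}}_M^2$, not the reverse. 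So knowing only that $\Norm{\tz_i-z_i}_M^2$ is small at a specific $i$ gives you no control on $\Norm{\tz_i-z_{i-1}}_M^2$ (and hence none on $\Norm{z_{i-1}-z_i}_M^2$) at that same index; the $\eta$-increment at a single index is not bounded either.

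The paper avoids this by reversing the order of operations: it bounds $\Norm{(u_k,v_k,w_k)}^2$ for \emph{every} $k$ first, and only afterwards selects the minimizing index. Concretely,
\[
\Norm{(u_k,v_k,w_k)}^2 \le \lambda_M\Norm{z_{k-1}-z_k}_M^2 \le 2\lambda_M\bigl[\Norm{\tz_k-z_{k-1}}_M^2+\Norm{\tz_k-z_k}_M^2\bigr],
\]
and then \eqref{pr:inq} is used in its natural direction to replace $\Norm{\tz_k-z_k}_M^2$ by $\sigma\Norm{\tz_k-z_{k-1}}_M^2+\eta_{k-1}-\eta_k$, yielding $\Norm{(u_k,v_k,w_k)}^2\le 2\lambda_M[(1+\sigma)\Norm{\tz_k-z_{k-1}}_M^2+\eta_{k-1}-\eta_k]$. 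Combining Lemma~\ref{cor:aux2} with \eqref{pr:inq} gives the telescoping bound $(1-\sigma)\Norm{\tz_k-z_{k-1}}_M^2\le \Norm{z^*-z_{k-1}}_M^2-\Norm{z^*-z_k}_M^2+\eta_{k-1}-\eta_k$; substituting and summing over $k$ produces $\sum_{l=1}^k\Norm{(u_l,v_l,w_l)}^2\le \tfrac{2\lambda_M}{1-\sigma}[(1+\sigma)d_0+2\eta_0]=2\lambda_M\mathcal{C}_1 d_0$, from which \eqref{2rsl} follows by picking the minimum term. The $1+\sigma$ in $\mathcal{C}_1$ thus comes from applying \eqref{pr:inq} \emph{before} summing, not after.
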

\begin{proof} By noting that $u_{k}=G\left(x_{k-1}-x_{k}\right)$ (see \eqref{mult_2}), 
the expressions in \eqref{1rsl} follow immediately from \eqref{1rsd}, \eqref{2rsd} and Theorem~\ref{pr:aux}.   From Theorem~\ref{pr:aux}, we  have   $\left(u_{k},v_{k},w_{k}\right)=M(z_{k-1}-z_{k})$ and hence
\begin{align*}
\Norm{\left(u_{k},v_{k},w_{k}\right)}^2&\leq\lambda_M\Norm{z_{k-1}-z_k}_{M}^{2}\leq 2\lambda_M\left[\Norm{z_{k-1}-\tilde{z}_k}_{M}^{2}+\Norm{\tz_k-z_k}_{M}^{2}\right]\nonumber\\
&\leq 2\lambda_M\left[\left(1+\sigma\right)\Norm{z_{k-1}- \tz_k}_M^2+\eta_{k-1}-\eta_{k}\right],
\end{align*}
where  the last inequality is due to \eqref{pr:inq}. On the other hand, from Lemma~\ref{cor:aux2} and \eqref{pr:inq}, we obtain
\begin{equation*}
\Norm{z^{\ast}-z_k}_{M}^{2}-\Norm{z^{\ast}-z_{k-1}}_{M}^{2}\leq \left(\sigma-1\right)\Norm{\tz_k-z_{k-1}}_{M}^{2}+\eta_{k-1}-\eta_{k},
\end{equation*} 
where  $z^{\ast}\in T^{-1}(0)$. The last two estimates and the fact that $\sigma<1$ imply that, for every $k\geq 1$
\begin{align*}
\Norm{\left(u_{k},v_{k},w_{k}\right)}^2&\leq 
2\lambda_M\left[\frac{1+\sigma}{1-\sigma}\left(\Norm{z^{\ast}-z_{k-1}}_{M}^2-\Norm{z^{\ast}-z_{k}}_{M}^{2}+\eta_{k-1}-\eta_{k}\right)+\eta_{k-1}-\eta_{k}\right]\\
&=\frac{2\lambda_M}{1-\sigma}\left[\left(1+\sigma\right)\left(\Norm{z^{\ast}-z_{k-1}}_{M}^2-\Norm{z^{\ast}-z_{k}}_{M}^2\right)+2\left(\eta_{k-1}-\eta_{k}\right)\right].
\end{align*}
By summing the above inequality from $k=1$ to  $k$, we obtain
\begin{equation}\label{eq:893}
\sum_{l=1}^{k}\Norm{\left(u_{l},v_{l},w_{l}\right)}^2\leq \frac{2\lambda_M}{1-\sigma}\left[\left(1+\sigma\right)\Norm{z^{\ast}-z_{0}}_{M}^{2}+2\eta_{0}\right],
\end{equation}
which, combined with the definitions of $d_{0}$ and $\eta_{0}$   \eqref{defd_0} and \eqref{def:eta}, respectively,  yields  
\begin{equation*}
k\left(\min_{l=1,\ldots,k}\Norm{\left(u_{l},v_{l},w_{l}\right)}^2\right)\leq \frac{2\lambda_M}{1-\sigma}\left[\left(1+\sigma\right)+ \frac{8\left(1+\tau+\vartheta\right)\varphi\left(\sigma\right)}{\left(\tau+\theta\right)\left(1+\tau\right)\vartheta} \right]d_0.
\end{equation*}
Therefore, \eqref{2rsl} follows now from the last inequality and the definition of $\mathcal{C}_1$.
\end{proof}

\begin{remark}\label{remark-pointwise}  (a)
It follows  from Theorem~\ref{th:ptw}   that, for a given tolerance $\rho>0$,   Algorithm~\ref{alg:in:sy} generates a  $\rho-$approximate solution $(\tilde x_i,y_i,\tilde{\gamma}_i)$ of \eqref{sist:lag} with  {residual}  
$(u_{i},v_{i},w_i)$, i.e.,
\[ u_{i}\in\partial f(\tilde x_i)-A^{\ast}\tilde{\gamma}_i, \qquad v_{i}\in \partial g(y_i)-B^{\ast}\tilde{\gamma}_i,\qquad
w_{i}= A\tilde{x}_{i}+By_{i}-b, \]
such that 
\[ \max\{\|u_i\|, \|v_i\|, \|w_i\|\}\leq \rho,\]
 in at most 
\begin{equation*}
\bar{k}=\left\lceil\frac{2\lambda_{M}d_{0}\mathcal{C}_1}{\rho^{2}}\right\rceil
\end{equation*}
iterations.  (b) Theorem~\ref{th:ptw} encompasses many recently pointwise convergence rates  of ADMM variants.
Namely, (i) by taking  $\tau=0$ and $G=I/\beta$, we obtain the  pointwise convergence rate of  the partially inexact proximal ADMM established in   \cite[Theorem~3.1]{adona2018partially}. Additionally,    if $\tilde{\sigma}=\hat{\sigma}=0$, the pointwise rate of the FG-P-ADMM  in  \cite[Theorem~2.1] {MJR2} is recovered.
(ii) By choosing    $\theta=1$ and $G=I/\beta$, we have the pointwise rate of the inexact proximal generalized ADMM as in \cite[Theorem~1]{adonaCOAP}.  Finally, if  $\theta=1$, $G=I/\beta$ and  $\tilde{\sigma}=\hat{\sigma}=0$,    the pointwise convergence rate of the G-P-ADMM  in \cite[Theorem~3.4]{Adona2018} is obtained.
\end{remark}

\begin{theorem}[Ergodic convergence rate of Algorithm~\ref{alg:in:sy}] \label{the_erg}       
Consider the sequences $\left\{\left(x^a_k,y^a_k,\gamma^a_k,\tilde{x}^a_k,\tilde\gamma^a_k\right)\right\}$, $\{\left(u_{k}^a,v_{k}^a,w_{k}^{a}\right)\}$, and  $\{\left(\varepsilon^a_{k},\zeta^a_{k}\right)\}$ defined, for every $k\geq 1$, by
\begin{equation}\label{erg01}
\left(x^a_k,y^a_k,\gamma^a_k,\tilde{x}^a_k,\tilde\gamma^a_k\right)=\frac{1}{k}\sum_{i=1}^k\left(x_i,y_i,\gamma_i,\tilde{x}_i,\tilde{\gamma}_i\right), \qquad
 \left(u_{k}^a,v_{k}^a,w_{k}^{a}\right)=\frac{1}{k}\sum_{i=1}^k\left(u_{i},v_{i},w_{i}\right),
\end{equation}
\begin{equation}\label{erg02}
\varepsilon^a_{k}=\frac{1}{{k}}\sum_{i=1}^k\Inner{u_{i}+A^{\ast}\tilde{\gamma}_{i}}{\tilde{x}_i-\tilde{x}_k^a},\qquad\mbox{and}\qquad\zeta^a_{k}=\frac{1}{{k}}\sum_{i=1}^k \Inner{v_{i}+B^{\ast}\tilde{\gamma}_{i}}{y_i-y_k^a},
\end{equation}
where $v_{i}$ and $w_{i}$ are as in \eqref{1rsd} and \eqref{2rsd}, respectively.
Then,  for every $k\geq 1$, there hold $\varepsilon^a_{k}\geq 0$,  $\zeta^{a}_{k}\geq 0$,  
\begin{equation}\label{1r_erg} 
u_{k}^{a}\in\partial_{\varepsilon^{a}_{k}}f\left(\tilde{x}_k^a\right)- A^{\ast}\tilde{\gamma}_k^a, \qquad v_{k}^{a}\in 
\partial_{\zeta^{a}_{k}}g\left(y_k^a\right)-B^{\ast}\tilde{\gamma}_k^a, \qquad w_{k}^{a}=A\tilde{x}_{k}^{a}+By_{k}^{a}-b,
\end{equation}
\begin{equation}\label{2r_erg}
\max\left\{\Norm{u_{k}^{a}},\Norm{v_{k}^a},\Norm{w_k^a}\right\} 
\leq \frac{2\sqrt{\lambda_{M}d_{0}\mathcal{C}_2}}{k}, \qquad \max\left\{\varepsilon^a_{k},\zeta^a_{k}\right\}\leq \frac{3d_0\mathcal{C}_3}{2k},
\end{equation}
where  $\mathcal{C}_2:=[1+{4\left(1+\tau+\vartheta\right)\varphi\left(\sigma\right)}/({\left(\tau+\theta\right)\left(1+\tau\right)\vartheta})]$ and $\mathcal{C}_3:=(3-2\sigma)\mathcal{C}_2/(1-\sigma)$,
 $\lambda_{M}$ is the largest eigenvalue of the matrix $M$  defined in \eqref{def:oper}, $ \sigma\in[\hat{\sigma},1)$ is given by  Proposition~\ref{lm:coef}   and $\vartheta$, $\varphi$ and $d_0$ are as in \eqref{def:vart}, \eqref{ph} and \eqref{defd_0}, respectively.
\end{theorem}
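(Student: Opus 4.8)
The plan is to derive the ergodic statement from the two HPE-type relations already established, namely the inclusion $M(z_{k-1}-z_k)\in T(\tz_k)$ from Theorem~\ref{pr:aux} and the inequality \eqref{pr:inq} from Theorem~\ref{inq:hpe}, in essentially the same way \cite[Theorem~3.4]{MJR2} is proved, but carrying along the auxiliary sequence $\{\eta_k\}$. First I would record, from \eqref{1rsl}, that $(u_i,v_i,w_i)=M(z_{i-1}-z_i)$ lies in $T(\tz_i)$; writing out the three block components of $T$ at the point $\tz_i=(\tx_i,y_i,\tilde\gamma_i)$ gives $u_i\in\partial f(\tx_i)-A^*\tilde\gamma_i$ and $v_i\in\partial g(y_i)-B^*\tilde\gamma_i$ for each $i$. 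Then, using the transportation/averaging formula for the $\varepsilon$-subdifferential (the standard fact that if $s_i\in\partial h(t_i)$ for $i=1,\dots,k$ and $\bar t=\frac1k\sum t_i$, $\bar s=\frac1k\sum s_i$, then $\bar s\in\partial_{\bar\varepsilon}h(\bar t)$ with $\bar\varepsilon=\frac1k\sum\langle s_i,t_i-\bar t\rangle\ge0$), applied to $h=f$ with $s_i=u_i+A^*\tilde\gamma_i$, $t_i=\tx_i$, and to $h=g$ with $s_i=v_i+B^*\tilde\gamma_i$, $t_i=y_i$, yields exactly the inclusions in \eqref{1r_erg} together with $\varepsilon^a_k\ge0$, $\zeta^a_k\ge0$; the affine third block averages linearly, giving $w^a_k=A\tx^a_k+By^a_k-b$.

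Next I would prove the residual bound in the left half of \eqref{2r_erg}. Since $(u^a_k,v^a_k,w^a_k)=\frac1k\sum_{i=1}^k M(z_{i-1}-z_i)=\frac1k M(z_0-z_k)$, convexity of $\|\cdot\|^2$ gives $\|(u^a_k,v^a_k,w^a_k)\|^2\le\frac1k\sum_{i=1}^k\|(u_i,v_i,w_i)\|^2$, and the summed estimate \eqref{eq:893} from the proof of Theorem~\ref{th:ptw} bounds the right-hand side by $\frac{2\lambda_M}{1-\sigma}\left[(1+\sigma)\|z^*-z_0\|_M^2+2\eta_0\right]/k$. Taking the infimum over $z^*\in T^{-1}(0)$ and substituting the definitions of $d_0$ and $\eta_0$ from \eqref{defd_0} and \eqref{def:eta}, one checks that $(1+\sigma)d_0+2\eta_0=\left(1+\sigma+\frac{8(1+\tau+\vartheta)\varphi(\sigma)}{(\tau+\theta)(1+\tau)\vartheta}\right)d_0$; bounding $1+\sigma\le 2$ and the remaining bracket appropriately folds everything into $\mathcal{C}_2$, producing $\|(u^a_k,v^a_k,w^a_k)\|^2\le 4\lambda_M d_0\mathcal{C}_2/k^2$, which is the claimed inequality after taking square roots.

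The harder part is the bound on the ergodic $\varepsilon$-residuals. I would start from the identity $s_i\in\partial f(\tx_i)-\text{(stuff)}$ written as $\langle M(z_{i-1}-z_i),\tz_i-z^*\rangle\ge0$ for any $z^*\in T^{-1}(0)$, but here I instead need the quantity $\sum_i\langle M(z_{i-1}-z_i),\tz_i-\tz^a_k\rangle$ that defines $k(\varepsilon^a_k+\zeta^a_k)$ up to the affine coupling term. The standard device is to expand, for a fixed index $j$, $\langle M(z_{i-1}-z_i),\tz_i-\tz_j\rangle$ using monotonicity of $T$ and a telescoping/discrete-summation-by-parts argument; after averaging over $j$ one controls $k(\varepsilon^a_k+\zeta^a_k)$ by a sum of terms of the form $\|\tz_i-z_{i-1}\|_M^2$, $\|\tz_i-z_i\|_M^2$, $\|z^*-z_i\|_M^2$, plus the $\eta$-differences. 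Invoking \eqref{pr:inq} to trade $\|\tz_i-z_i\|_M^2$ against $\sigma\|\tz_i-z_{i-1}\|_M^2$ and Lemma~\ref{cor:aux2} to telescope the $\|z^*-z_i\|_M^2$ terms, and then \eqref{eq:893} and the $d_0$/$\eta_0$ substitution as above, one arrives at $k\max\{\varepsilon^a_k,\zeta^a_k\}\le\frac{(3-2\sigma)}{2(1-\sigma)}\left[(1+\sigma)d_0+2\eta_0\right]/(1+\sigma)\cdot(\dots)$; tracking the constants carefully and absorbing them into $\mathcal{C}_3=(3-2\sigma)\mathcal{C}_2/(1-\sigma)$ gives $\max\{\varepsilon^a_k,\zeta^a_k\}\le 3d_0\mathcal{C}_3/(2k)$. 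The main obstacle I anticipate is getting the combinatorics of this summation-by-parts exactly right so that the numerical constant matches $\mathcal{C}_3$ precisely — the paper explicitly warns that this does not follow immediately from \cite[Theorem~3.4]{MJR2}, presumably because of the extra $\eta_k$ bookkeeping and the need to split off the $k=1$ term where $\eta_0$ is defined via $d_0$ rather than via $q_0,y_0-y_{-1}$.
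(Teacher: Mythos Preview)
Your treatment of the inclusions in \eqref{1r_erg} and of $\varepsilon^a_k,\zeta^a_k\ge0$ matches the paper's, via the subgradient-averaging (transportation) formula.

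However, your argument for the first inequality in \eqref{2r_erg} is flawed and does \emph{not} give the $\mathcal{O}(1/k)$ rate. You correctly observe the telescoping $(u^a_k,v^a_k,w^a_k)=\tfrac1k M(z_0-z_k)$, but you then discard it and apply Jensen, obtaining $\|(u^a_k,v^a_k,w^a_k)\|^2\le \tfrac1k\sum_{i=1}^k\|(u_i,v_i,w_i)\|^2$. Combined with \eqref{eq:893} this yields only an $\mathcal{O}(1/k)$ bound on $\|(u^a_k,v^a_k,w^a_k)\|^2$, hence $\mathcal{O}(1/\sqrt{k})$ on the residuals --- the same pointwise rate, not the improved ergodic one. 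There is no extra factor of $1/k$ to be ``folded in''; your claim of a $1/k^2$ bound is simply wrong. The paper instead uses the telescoping directly: $\|(u^a_k,v^a_k,w^a_k)\|^2\le(\lambda_M/k^2)\|z_0-z_k\|_M^2$, and then bounds $\|z_0-z_k\|_M^2\le 2(\|z^*-z_0\|_M^2+\|z^*-z_k\|_M^2)\le 4(\|z^*-z_0\|_M^2+\eta_0)$ using the Fej\'er-type decrease \eqref{es:zst}. This is where the $1/k^2$ comes from, and it is why the constant is $\mathcal{C}_2$ (no factor $1/(1-\sigma)$) rather than $\mathcal{C}_1$.

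For the $\varepsilon$-residual bound, your sketch is too vague to verify and diverges from the paper. There is no ``averaging over $j$'' or monotonicity of $T$ in the paper's argument. Instead, one first rewrites $k(\varepsilon^a_k+\zeta^a_k)=\sum_i\langle M(z_{i-1}-z_i),\tz_i-\tz^a_k\rangle$ (checking the cross-terms in the $\gamma$-block cancel), then uses the polarization identity together with \eqref{pr:inq} to get $2\sum_i\langle M(z_{i-1}-z_i),\tz_i-z\rangle\le\|z-z_0\|_M^2+\eta_0$ for any $z$, applies this with $z=\tz^a_k$, and finally bounds $\|\tz^a_k-z_0\|_M^2\le\max_i\|\tz_i-z_0\|_M^2$ by convexity and each $\|\tz_i-z_0\|_M^2$ via the triangle inequality through $z^*$ and \eqref{es:zst}. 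The constant $(3-2\sigma)/(1-\sigma)$ falls out of this last step, not from \eqref{eq:893}.
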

\begin{proof}
For every $i\geq 1$, it follows from Theorem~\ref{th:ptw} that 
\begin{equation*}
u_{i}+A^{\ast}\tilde{\gamma}_i\in\partial f(\tilde{x}_{i}),\qquad
v_{i}+B^{\ast}\tilde{\gamma}_i\in\partial g(y_i),\qquad
w_{i}= A\tilde{x}_{i}+B y_{i}-b. 
\end{equation*}
Hence, using  \eqref{erg01}, we immediately obtain the last equality in \eqref{1r_erg}. Furthermore, from the first two inclusions above, \eqref{erg01}, \eqref{erg02} and \cite[Theorem 2.1]{Goncalves2018} we conclude that, for every $k\geq 1$, $\varepsilon^a_{k}\geq 0$, $\zeta^a_{k}\geq 0$, and the inclusions in \eqref{1r_erg} hold. 
To show  \eqref{2r_erg}, we recall again that $\left(u_{i},v_{i},w_{i}\right)=M\left(z_{i-1}-z_{i}\right)$
(see the proof of Theorem~\ref{th:ptw}), which together with \eqref{erg01}, yields 
$\left(u_{k}^{a},v_{k}^a,w_{k}^a\right)=\left(1/k\right)M\left(z_{0}-z_{k}\right)$.
Then, for an arbitrary solution $z^{\ast}=\left(x^{\ast},y^{\ast},\gamma^{\ast}\right)$ of \eqref{sist:lag}, we have
\begin{align*}
\Norm{\left(u_{k}^{a},v_{k}^a,w_{k}^a\right)}^{2} 
\leq\frac{\lambda_{M}}{k^2}\Norm{z_{0}-z_{k}}^2_M 
\leq \frac{2\lambda_{M}}{k^2}\left(\Norm{z^{\ast}-z_{0}}_{M}^{2} + \Norm{z^*-z_k}^{2}_{M}\right).
\end{align*}
Combining Lemma~\ref{cor:aux2}  with \eqref{pr:inq}, we obtain, for every $k\geq 1$, that
\begin{equation}\label{es:zst}
\Norm{z^{\ast}-z_k}^{2}_{M}+\eta_{k}\leq
\Norm{z^{\ast}-z_{k-1}}^{2}_{M}+\left(\sigma-1\right)\Norm{\tz_{k}-z_{k-1}}^{2}_{M}+\eta_{k-1}
\leq\Norm{z^{\ast}-z_{k-1}}^{2}_{M}+\eta_{k-1}. 
\end{equation}
The last two expressions imply that  
\begin{equation*}\label{eq:ui56}
\Norm{\left(u_{k}^{a},v_{k}^a,w_{k}^a\right)}^{2}\leq \frac{4\lambda_{M}}{k^2}\left(\Norm{z^{\ast}-z_{0}}_{M}^{2}+\eta_{0} \right),
\end{equation*}
which, combined with the definitions of $d_{0}$ and $\eta_{0}$ in \eqref{defd_0} and \eqref{def:eta}, respectively, implies the first inequality in  \eqref{2r_erg}.
 Let us now show the second inequality in  \eqref{2r_erg}.  From definitions in \eqref{erg02}, we have
\begin{align*}
\varepsilon_{k}^a+\zeta_{k}^a
&=\dfrac{1}{k}\sum_{i=1}^k\,\Big(\Inner{u_{i}}{\tilde{x}_i-\tilde{x}_k^a}+\Inner{v_{i}}{y_i-y_k^a}+\Inner{\tilde{\gamma}_i}{A\tilde{x}_i+By_i-A\tilde{x}_k^a-By_k^a}\Big)\\ 
&=\dfrac{1}{k}\sum_{i=1}^k\,\Big(\Inner{u_{i}}{\tilde{x}_i-\tilde{x}_k^a}+\Inner{v_{i}}{y_i-y_k^a}+\Inner{\tilde{\gamma}_i}{w_{i}-w_{k}^{a}}\Big)\\
&=\dfrac{1}{k}\sum_{i=1}^k\,\Big(\Inner{u_{i}}{\tilde{x}_i-\tilde{x}_k^a}+\Inner{v_{i}}{y_i-y_k^a}+\Inner{w_i}{\tilde{\gamma}_{i}-\tilde{\gamma}_{k}^{a}}\Big),
\end{align*}
where  the second equality is due to the expressions of $w_{i}$ and $w_{k}^{a}$ in \eqref{1rsl} and \eqref{1r_erg}, respectively,   
and the third follows from the fact that  
\begin{align*}
\frac{1}{k}\sum_{i=1}^k\Inner{\tilde{\gamma}_i}{w_{i}-w_{k}^a}=\frac{1}{k}\sum_{i=1}^k\Inner{\tilde{\gamma}_i-\tilde{\gamma}_k^a}{w_{i}-w_{k}^a}= \frac{1}{k}\sum_{i=1}^k\Inner{w_{i}}{\tilde{\gamma}_i-\tilde{\gamma}_k^a}
\end{align*}
(see the definitions of $w_{k}^{a}$ and $\tilde{\gamma}_{k}^{a}$ in \eqref{erg01}).
Hence, setting $\tilde{z}_{k}^{a}=(\tilde{x}_k^a,y_k^a,\tilde{\gamma}_k^a)$, and noting that $(u_{i},v_{i},w_{i})=M\left(z_{i-1}-z_{i}\right)$ and $\tilde{z}_{i}=(\tilde{x}_i,y_i,\tilde{\gamma}_i)$, we obtain 
\begin{equation}\label{ep:ze}
\varepsilon_{k}^a+\zeta_{k}^a= \frac{1}{k}\sum_{i=1}^k\Inner{M\left(z_{i-1}-z_{i}\right)}{\tilde{z}_{i}-\tilde{z}_{k}^{a}}.
\end{equation} 
On the other hand, using  \eqref{pr:inq}, we deduce that   for all $z\in  \mathbb{R}^n\times\mathbb{R}^p\times \mathbb{R}^m$
\begin{align*}
\Norm{z-z_{i}}_{M}^{2}-\Norm{z-z_{i-1}}_{M}^{2}
&=\Norm{\tilde{z}_{i}-z_{i}}_{M}^{2}-\Norm{\tilde{z}_{i}-z_{i-1}}_{M}^{2}+2\Inner{M(z_{i-1}-z_{i})}{z-\tilde{z}_{i}}\\
&\leq (\sigma-1)\Norm{\tilde{z}_{i}-z_{i-1}}_{M}^{2} +\eta_{i-1}-\eta_{i}+2\Inner{M(z_{i-1}-z_{i})}{z-\tilde{z}_{i}},
\end{align*}
and then, since $\sigma<1$, we find
\begin{equation*}
2\sum_{i=1}^k \Inner{M(z_{i-1}-z_{i})}{\tilde{z}_{i}-z}\leq \Norm{z-z_{0}}_{M}^{2}-\Norm{z-z_{k}}_{M}^{2} +\eta_{0}-\eta_{k}\leq \Norm{z-z_{0}}_{M}^{2}+\eta_{0}.
\end{equation*}
Applying this result with $z:=\tilde{z}_{k}^{a}$ and combining with \eqref{ep:ze}, we find
\begin{align}\label{2ep:ze}
2k(\varepsilon_{k}^a+\zeta_{k}^a)\leq 
\Norm{\tilde{z}_{k}^{a}-z_{0}}_{M}^{2}+\eta_{0}
\leq \frac{1}{k}\sum_{i=1}^k \Norm{\tilde{z}_{i}-z_{0}}_{M}^{2}+\eta_{0}
\leq \max_{i=1,\ldots,k}\Norm{\tilde{z}_{i}-z_{0}}_{M}^{2}+\eta_{0},
\end{align}
where, in the second inequality, we used the convexity of  $\|\cdot\|_{M}^{2}$ and the fact that $\tilde{z}_{k}^{a}=(1/k)\sum_{i=1}^{k}\tilde{z}_{i}$. Additionally, since $\|z+z^{\prime}+z''\|_{M}^2\leq 3\left(\|z\|_{M}^2+\|z^{\prime}\|_{M}^2+\|z^{\prime\prime}\|_{M}^2\right)$, for all  $z, z^{\prime}, z^{\prime\prime}\in  \mathbb{R}^n\times\mathbb{R}^p\times \mathbb{R}^m$, we also have  
\begin{equation*}
\Norm{\tilde{z}_{i}-z_{0}}_{M}^{2}\leq 3\left[\Norm{\tilde{z}_{i}-z_{i}}_{M}^{2}+\Norm{z^{\ast}-z_{i}}_{M}^{2}+\Norm{z^{\ast}-z_{0}}_{M}^{2}\right],\qquad\forall\,i\geq 1. 
\end{equation*}
This, together with \eqref{pr:inq} and \eqref{es:zst},  implies that 
\begin{align*}
\Norm{\tilde{z}_{i}-z_{0}}_{M}^{2}
&\leq 3\left[\sigma\Norm{\tilde{z}_{i}-z_{i-1}}_{M}^{2}+\eta_{i-1}+\Norm{z^{\ast}-z_{i-1}}_{M}^{2}+\eta_{i-1}+\Norm{z^{\ast}-z_{0}}_{M}^{2}\right]\\
&\leq 3\left[\sigma\Norm{\tilde{z}_{i}-z_{i-1}}_{M}^{2}+2\left(\Norm{z^{\ast}-z_{i-1}}_{M}^{2}+\eta_{i-1}\right)+\Norm{z^{\ast}-z_{0}}_{M}^{2}\right]\\
&\leq 3\left[\sigma\Norm{\tilde{z}_{i}-z_{i-1}}_{M}^{2}+3\Norm{z^{\ast}-z_{0}}_{M}^{2}+2\eta_{0}\right],
\end{align*}
which, combined with \eqref{2ep:ze}, yields
\begin{equation*}
2k\left(\varepsilon_{k}^a+\zeta_{k}^a\right)\leq
3\left[3\left(\Norm{z^{\ast}-z_{0}}_{M}^{2}+\eta_{0}\right)+\sigma\max_{i=1,\ldots,k}\Norm{\tilde{z}_{i}-z_{i-1}}_{M}^{2}\right].
\end{equation*}
Now, from \eqref{es:zst}, it is also possible to verify that 
\begin{equation*}
\left(1-\sigma\right)\Norm{\tilde{z}_{i}-z_{i-1}}_{M}^{2}\leq\Norm{z^{\ast}-z_{i-1}}_{M}^{2}+ \eta_{i-1}\leq
\Norm{z^{\ast}-z_{0}}_{M}^{2}+ \eta_{0},
\end{equation*}
and, therefore 
\begin{equation}\label{eq:id90}
\varepsilon_{k}^a+\zeta_{k}^a\leq \frac{3(3-2\sigma)}{2(1-\sigma)k}\left(\Norm{z^{\ast}-z_{0}}_{M}^{2}+ \eta_{0}\right).
\end{equation}
Therefore,  the second inequality in \eqref{2r_erg} now follows from   the definitions of $d_{0}$ and $\eta_{0}$ in \eqref{defd_0} and \eqref{def:eta}, respectively.
\end{proof}

\begin{remark}\label{remark-ergodic} (a)
It follows from Theorem~\ref{the_erg}     that, for a given tolerance $\rho>0$, Algorithm \ref{alg:in:sy} generates a  $\rho-$approximate solution $(\tilde x^a_k,y^a_k,\tilde{\gamma}^a_k)$ of \eqref{sist:lag} with  {residuals}  
$(u^a_{k},v^a_{k},w^a_k)$ and $(\varepsilon^a_{k},\zeta^a_k)$, i.e.,
\[u_{k}^a\in \partial_{\varepsilon^a_{k}}f(\tilde{x}_k^a)- A^*\tilde{\gamma}_k^a,\qquad v_{k}^a\in \partial_{{\zeta^{a}_{k}}}g(y_k^a)- B^*\tilde{\gamma}_k^a,  \qquad w_{k}^a=A\tilde{x}_k^a+By_k^a-b,
\] 
such that 
\[\max \{\|u_{k}^a\|,\|v_{k}^a\|,\|w_{k}^a\|,\varepsilon^a_{k},{\zeta^{a}_{k}} \}\leq \rho,\]
in at most  $\bar{k}=\max\left\{k_{1},k_{2}\right\}$ iterations, where  
\begin{equation*}
k_{1}=\left\lceil\frac{2\sqrt{\lambda_{M}d_{0}\mathcal{C}_2}}{\rho}\right\rceil,\qquad\text{and}\qquad
k_{2}=\left\lceil\frac{3d_{0}\mathcal{C}_3}{2\rho}\right\rceil.
\end{equation*}
 (b)  Similarly to Theorem~\ref{th:ptw},   Theorem~\ref{the_erg} recovers, in particular,      many recently ergodic convergence rates  of ADMM variants. Namely,  (i) by taking  $\tau=0$ and $G=I/\beta$, we obtain the  ergodic convergence rate of  the partially inexact proximal ADMM established in   \cite[Theorem~3.2]{adona2018partially}. Additionally,    if $\tilde{\sigma}=\hat{\sigma}=0$, the ergodic rate of the FG-P-ADMM with $\theta \in (0,(1+\sqrt{5})/2)$  in  \cite[Theorem~2.2] {MJR2} is obtained.
(ii) By choosing    $\theta=1$ and $G=I/\beta$, we have the ergodic rate of the inexact proximal generalized ADMM as in \cite[Theorem~2]{adonaCOAP}.  Finally, if  $\theta=1$, $G=I/\beta$ and  $\tilde{\sigma}=\hat{\sigma}=0$,    the ergodic convergence rate of the G-P-ADMM with $\tau\in(-1,1)$  in \cite[Theorem~3.6]{Adona2018} is recuperated.
\end{remark}

\section{Numerical experiments}\label{sec:Numer} 

The purpose of this section is to assess the practical behavior  of  the proposed  method.  We first mention that
 the inexact FG-P-ADMM (Algorithm~\ref{alg:in:sy} with $\tau=0$) and  the inexact  G-P-ADMM (Algorithm~\ref{alg:in:sy} with $\theta=1$)   have been shown  very efficient  in some applications. 
 Indeed, 
as reported in \cite{adona2018partially},  the inexact FG-P-ADMM with    $\theta=1.6$  outperformed  other inexact ADMMs for two classes of problems, namely, LASSO and $\ell_1-$regularized logistic regression.  On the other hand,  the inexact G-P-ADMM, proposed later in  \cite{adonaCOAP}, with $\tau=0.9$ (or, $\alpha=1.9$ in term of the relaxation factor $\alpha$)  showed to be  even  more  efficient than  the  FG-P-ADMM with   $\theta=1.6$ for these same  classes of problems.  Therefore, our goal here is to investigate the efficiency  of Algorithm~\ref{alg:in:sy}, which  combines  both  acceleration parameters $\tau$ and $\theta$ in a single method, for solving another real-life application.
The computational results were obtained  using MATLAB R2018a on a  2.4 GHz Intel(R) Core i7 computer with 8 GB of RAM.

We use as test problem the total variation (TV) regularization problem (a.k.a. TV/L2 minimization), first proposed by \cite{Rudin1992259},
\begin{equation}\label{im:rest}
\min_{x\in \R^{m\times n}} \frac{\mu}{2}\Norm{Kx-c}^{2}+ \Norm{x}_{TV},
\end{equation}
where $x\in\R^{m\times n}$ is the original image to be restored,   $\mu$ is a positive  regularization parameter, $K:\R^{m\times n}\to \R^{m\times n}$ is a linear operator representing  some blurring operator, $c\in\R^{m\times n}$ is the  degraded image and $\norm{\cdot}_{TV}$ is the discrete TV-norm.  Let us briefly recall the definition of  TV-norm.
Let $x\in \R^{m\times n}$ be given and consider $D^1$ and $D^2$ the first-order finite difference $m\times n$ matrices in the horizontal and vertical directions, respectively, which, under the periodic boundary condition, are defined by
\begin{equation*}
(D^1x)_{i,j} = 
\begin{cases}
x_{i+1,j}- x_{i,j} &\text{if}\quad i< m,\\
x_{1,j}- x_{m,j} &\text{if}\quad i= m, 
\end{cases}
\qquad
(D^2x)_{i,j} = 
\begin{cases}
x_{i,j+1}- x_{i,j} &\text{if}\quad j<n,\\
x_{i,1}- x_{i,n} &\text{if}\quad j=n, 
\end{cases}
\end{equation*}
for $i=1,2,\ldots,m$ and $j=1,2,\ldots,n$. By defining  $D=\left(D^1;D^2\right)$, we obtain 
\begin{equation}\label{def:TV}
\Norm{x}_{TV}=\Norm{x}_{TV_{s}}:=\normiii*{Dx}_{s}:=\sum_{i=1}^{m}\sum_{j=1}^{n}\Norm{\left(Dx\right)_{i,j}}_{s},
\end{equation}
where $\left(Dx\right)_{i,j}=\left(\left(D^1x\right)_{i,j},\left(D^2x\right)_{i,j}\right)\in\R^{2}$ and $s=1$ or $2$. The  TV norm   is known as {anisotropic} and isotropic  if $s=1$ and  $s=2$,  respectively.  Here, we consider only the  isotropic case.

By introducing an auxiliary variable $y=(y^1,y^2)$ where $y^1,y^2\in\R^{m\times n}$ and, in view of the definition in \eqref{def:TV}, the problem in \eqref{im:rest} can be written as  
\begin{equation}\label{probl1}
\min_{x,y} \frac{\mu}{2}\Norm{Kx-c}^{2}+\normiii*{y}_{2}\quad s.t.\quad y = Dx,
\end{equation}
which is obviously an instance of \eqref{optl} with $f(x)=\frac{\mu}{2}\Norm{Kx-c}^{2}$, $g(y)=\normiii*{y}_{2}$, A$=-$D, B$=$I, and b$=0$.
In this case,  the pair $(\tilde{x}_k, u_k)$ in 
\eqref{cond:inex} can be obtained by computing  an approximate solution  $\tilde{x}_k$ with a residual $u_k$  of the following  linear system
\[
\left(\mu K^{\top}K+\beta D^{\top}D\right)x= \mu K^{\top}c + D^{\top}\left(\beta y_{k-1}-\gamma_{k-1}\right).
\]
In our implementation, the above linear system was reshaped  as a  linear system of size $mn\times1$ and then solved by means of the conjugate gradient method \cite{nocedal2006numerical} starting from the origin. 
Note that, by using the two-dimensional shrinkage operator \cite{Wang2008248,Yang2009569}, the subproblem  \eqref{g_sub} has a closed-form solution $y_k=\left(y^1_k,y^2_k\right)$ given explicitly by 
\[
\left(\left(y^1_k\right)_{i,j},\left(y^2_k\right)_{i,j}\right):=\max\left\{\Norm{(w^1_{i,j},w^2_{i,j})}-\frac{1}{\beta}, 0\right\}\left(\frac{w^1_{i,j}}{\Norm{(w^1_{i,j},w^2_{i,j})}},\frac{w^2_{i,j}}{\Norm{(w^1_{i,j},w^2_{i,j})}}\right),
\]
for $i=1,2,\ldots,m$ and $j=1,2,\ldots,n$, where 
\[\left(w^1,w^2\right):=(D^1\tilde{x}_{k}+(1/\beta)\gamma^1_{k-\frac{1}{2}}, D^2\tilde{x}_{k}+(1/\beta)\gamma^2_{k-\frac{1}{2}}),\] and the convention $0\cdot(0/0)=0$ is followed.

 The initialization parameters in  Algorithm~\ref{alg:in:sy} were set as follows: $(x_{0},y_{0},\gamma_{0})=(0,0,0)$, $\beta=1$, $G=I/\beta$,  $H={\bf 0}$ and $\hat{\sigma}=1-10^{-8}$. From \eqref{def:Reg} (see also Remark~\ref{remarkalg}(a)), for given $\tau\in\left(-1,1\right)$ and $\theta\in\left(-\tau,\left(1- \tau + \sqrt{5+2\tau-3\tau^2}\right)/2\right)$,  the  error tolerance parameter  $\tilde{\sigma}$ was defined as 
\begin{equation*}
\tilde{\sigma}=0.99\times
\begin{cases}
\min\left\{\dfrac{\left(1+\tau+\theta-\tau\theta-\tau^{2}-\theta^{2}\right)\left(\tau-1\right)}{\tau^{2}-2\theta+\theta^{2}}, 1-\tau, 1\right\}, &\!\!\text{if}\, \tau^{2}-2\theta+\theta^{2}<0,\\
\min\left\{1-\tau, 1\right\}, &\!\!\text{if}\, \tau^{2}-2\theta+\theta^{2}\geq 0.
\end{cases}
\end{equation*}
Moreover, we used the following stopping criterion 
\begin{equation*}\label{crit:stop}
\Norm{M(z_{k-1}-z_{k})}_{\infty}< 10^{-2},
\end{equation*}
where $z_{k}=(x_{k},y_{k},\gamma_{k})$ and $M$ is as in \eqref{def:oper}.

We considered six test images, which  were scaled in intensity to $[0,1]$, namely,  (a) Barbara  ($512\times 512$), (b) baboon  ($512\times 512$), (c)  cameraman ($256\times 256$), (d) Einstein ($225\times 225$), (e) clock $(256\times 256)$, and (f) moon $(347\times 403)$. 
All images were blurred by a 
Gaussian blur of size $9\times 9$   with standard deviation 5  and then corrupted  by a mean-zero Gaussian noise with variance $10^{-4}.$ The regularization parameter $\mu$ was set equal to  $10^3$. The quality of the images  was measured by the peak 
signal-to-noise ratio (PSNR) in decibel (dB):
\begin{equation*}
\text{PSNR} =10\log_{10}\left(\frac{\bar{x}_{\text{max}}^{2}}{\text{MSE}}\right)
\end{equation*}
where  $\text{MSE}=\frac{1}{mn}\sum_{i=1}^{m}\sum_{j=1}^{n}\left(\bar{x}_{i,j}-x_{i,j}\right)$, $\bar{x}_{\text{max}}$ is the maximum possible pixel value of the original image  and $\bar{x}$ and $x$ are the original image and the recovered image, respectively.

 Tables~\ref{tab:22}--\ref{tab:77}  report the numerical results of  Algorithm~\ref{alg:in:sy}, with some choices of $(\tau,\theta)$ 
satisfying \eqref{def:Reg}, for solving the six  TV regularization problem instances. In  the tables, ``Out" and ``Inner" denote the number of iterations and the total of inner iterations of the method, respectively, whereas ``Time" is the CPU time in seconds. We  mention that, for each problem instance, the final PSNRs  were the same for all $(\tau,\theta)$ considered. We displayed these values in  the tables as well as the PSNRs of the corrupted images.

\begin{table}[h!] 
\resizebox{\textwidth}{!}{
\begin{minipage}{0.5\textwidth}
\centering
\caption{Baboon $512\times 512$}\label{tab:22}
\begin{tabular}{cccrrr} 
\toprule
\multicolumn{6}{c}{PSNR: input 19.35dB, output 20.71dB}\\\midrule
$\tau$ &$\theta$ &$\tilde{\sigma}$ &Out &Inner &Time\\\midrule 
0.0 &1.00 &0.990 &131 &11723 &503.12\\  
0.0 &1.60 &0.062 &100 &11748 &495.38\\
0.9 &1.00 &0.099 &71  &7408  &314.51\\
0.7 &1.12 &0.175 &73  &7224  &312.27\\
0.7 &1.15 &0.142 &71  &7120  &303.42\\
0.7 &1.18 &0.107 &70  &7205  &309.97\\
0.8 &1.12 &0.074 &76  &8322  &387.54\\
0.8 &1.15 &0.040 &75  &8672  &393.05\\    
\bottomrule
\end{tabular}
\end{minipage}
\begin{minipage}{0.5\textwidth}
\centering
\caption{Barbara $512\times 512$}
\begin{tabular}{cccrrr}
\toprule
\multicolumn{6}{c}{PSNR: input 22.59dB, output 23.81dB}\\\midrule
$\tau$ &$\theta$ &$\tilde{\sigma}$ &Out &Inner &Time\\\midrule 
0.0 &1.00 &0.990 &142 &12910 &574.58\\ 
0.0 &1.60 &0.062 &105 &12403 &538.17\\
0.9 &1.00 &0.099 &80  &8620  &411.47\\
0.7 &1.12 &0.175 &84  &8643  &394.74\\
0.7 &1.15 &0.142 &82  &{8583}  &391.85\\ 
0.7 &1.18 &0.107 &82  &8835  &392.69\\
0.8 &1.12 &0.074 &{{79}}  &8665  &{371.47}\\
0.8 &1.15 &0.040 &{79}  &9110  &400.56\\ 
\bottomrule
\end{tabular} 
\end{minipage}  }
\end{table}

\begin{table}[h!]
\resizebox{\textwidth}{!}{
\begin{minipage}{0.5\textwidth}
\centering
\caption{Cameraman $256\times 256$}
\begin{tabular}{cccrrr} 
\toprule
\multicolumn{6}{c}{PSNR: input 21.02dB, output 25.14dB}\\\midrule
$\tau$ &$\theta$ &$\tilde{\sigma}$ &Out &Inner &Time\\\midrule 
0.0 &1.00 &0.990 &135 &13684 &87.92\\ 
0.0 &1.60 &0.062 &85  &10382 &64.21\\
0.9 &1.00 &0.099 &72  &8472  &54.83\\
0.7 &1.12 &0.175 &75  &8473  &52.77\\
0.7 &1.15 &0.142 &74  &8429  &52.02\\
0.7 &1.18 &0.107 &74  &8709  &53.41\\
0.8 &1.12 &0.074 &71  &8460  &51.83\\
0.8 &1.15 &0.040 &71  &8756  &53.75\\
\bottomrule
\end{tabular} 
\end{minipage}
\begin{minipage}{0.5\textwidth}
\centering
\caption{Clock $256\times 256$}
\begin{tabular}{cccrrr} 
\toprule
\multicolumn{6}{c}{PSNR: input 22.68dB, output 27.44dB}\\\midrule
$\tau$ &$\theta$ &$\tilde{\sigma}$ &Out &Inner &Time\\\midrule 
0.0 &1.00  &0.990 &130 &12666 &82.03\\  
0.0 &1.60  &0.062 &84  &10104 &64.97\\
0.9 &1.00  &0.099 &69  &7746 &53.39\\
0.7 &1.12  &0.175 &72  &7807 &51.95\\
0.7 &1.15  &0.142 &73  &7985 &52.01\\
0.7 &1.18  &0.107 &70  &7767 &55.67\\
0.8 &1.12  &0.074 &68  &7740 &50.03\\
0.8 &1.15  &0.040 &67 &7994 &51.48\\
\bottomrule
\end{tabular} 
\end{minipage} }
\end{table}

\begin{table}[h!]
\resizebox{\textwidth}{!}{
\begin{minipage}{0.5\textwidth}
\centering
\caption{Einstein $225\times 225$}
\begin{tabular}{cccrrr} 
\toprule
\multicolumn{6}{c}{PSNR: input 23.70dB, output 28.24dB}\\\midrule
$\tau$ &$\theta$ &$\tilde{\sigma}$ &Out &Inner &Time\\\midrule 
0.0 &1.00  &0.990 &120 &10506 &56.86\\ 
0.0 &1.60  &0.062 &88  &9968  &52.84\\ 
0.9 &1.00  &0.099 &72  &7560  &40.47\\ 
0.7 &1.12  &0.175 &68  &6573  &34.54\\
0.7 &1.15  &0.142 &74  &7631  &40.88\\
0.7 &1.18  &0.107 &73  &7566  &40.69\\
0.8 &1.12  &0.074 &71  &7685  &40.00\\
0.8 &1.15  &0.040 &70  &7830  &40.27\\
\bottomrule
\end{tabular}
\end{minipage}
\begin{minipage}{0.5\textwidth}
\centering
\caption{Moon $347\times 403$}\label{tab:77} 
\begin{tabular}{cccrrr} 
\toprule
\multicolumn{6}{c}{PSNR: input 25.57dB, output 28.28dB}\\\midrule
$\tau$ &$\theta$ &$\tilde{\sigma}$ &Out &Inner &Time\\\midrule 
0.0 &1.00  &0.990 &128 &11684 &249.27\\
0.0 &1.60  &0.062 &88  &10239 &215.91\\
0.9 &1.00  &0.099 &72  &7828  &168.30\\
0.7 &1.12  &0.175 &76  &7921  &170.00\\
0.7 &1.15  &0.142 &74  &7796  &205.34\\
0.7 &1.18  &0.107 &73  &7909  &194.64\\
0.8 &1.12  &0.074 &68  &7412 &161.05\\
0.8 &1.15  &0.040 &67  &7711  &181.89\\
\bottomrule
\end{tabular}
\end{minipage} }
\end{table}

 From the tables, we can see clearly  the numerical benefits of using  acceleration parameters  $\tau >0$ and $\theta>1$. Note  that Algorithm~\ref{alg:in:sy} with the  choice  $(\tau,\theta)=(0,1)$  had the worst performance, in terms of the three performance measurements, for all problem instances.  Note also that Algorithm~\ref{alg:in:sy} with $(\tau,\theta)=(0.9,1)$ ($0.9$ was the best value for $\tau$ in \cite{adonaCOAP}) performed better than  Algorithm~\ref{alg:in:sy} with $(\tau,\theta)=(0,1.6)$ ($1.6$ was the best value for $\theta$ in \cite{adona2018partially}), such  behavior was also observed  in \cite{adonaCOAP} for the LASSO and $\ell_1-$regularized logistic regression problems. We stress that  Algorithm~\ref{alg:in:sy} with  $(\tau,\theta)=(0.8,1.12)$ was faster in four (Barbara, cameraman, clock and moon) of six instances.  Fig.~\ref{figCamer} plots the original and corrupted  images as well as the restored image by    Algorithm~\ref{alg:in:sy} with $(\tau,\theta)=(0.8,1.12)$ for the six instances.
As a summary, we can conclude that  combinations of the acceleration parameters $\tau$ and $\theta$  can also be  efficient strategies in the inexact ADMMs for solving real-life applications.

\begin{figure}[!h]
\vspace{-0.5cm}
\includegraphics[width=\textwidth, height=3.7cm]{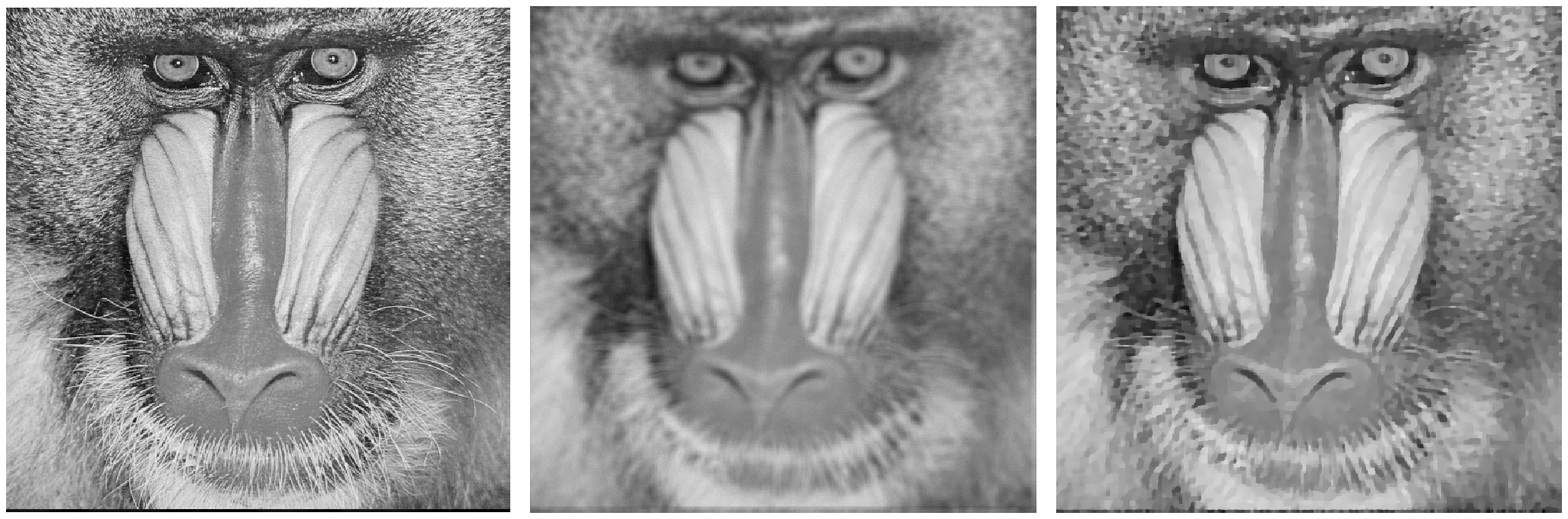}\vspace{-0.5cm}
\includegraphics[width=\textwidth, height=3.7cm]{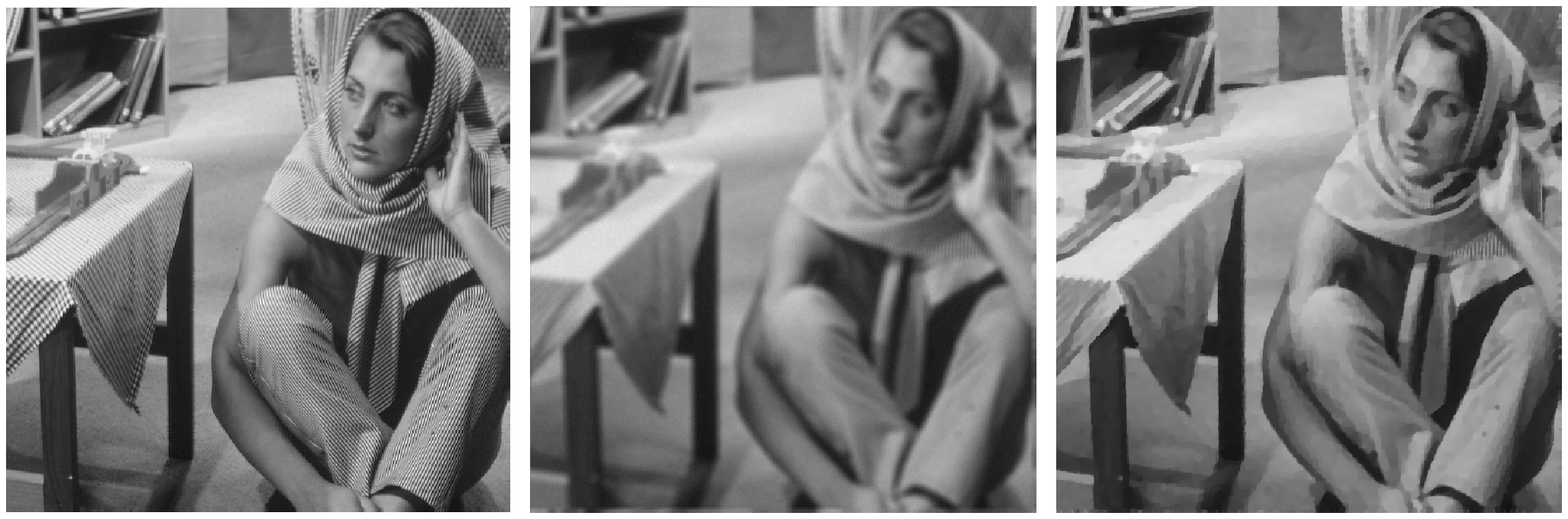}\vspace{-0.5cm}
\includegraphics[width=\textwidth, height=3.7cm]{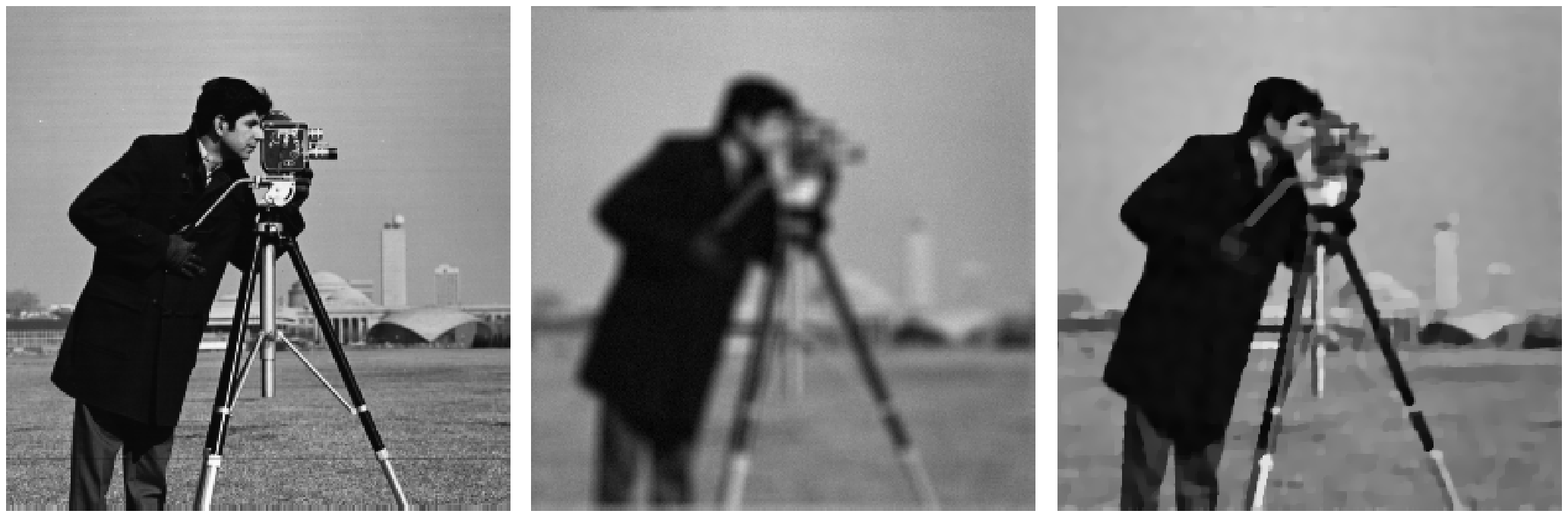}\vspace{-0.5cm}
\includegraphics[width=\textwidth, height=3.7cm]{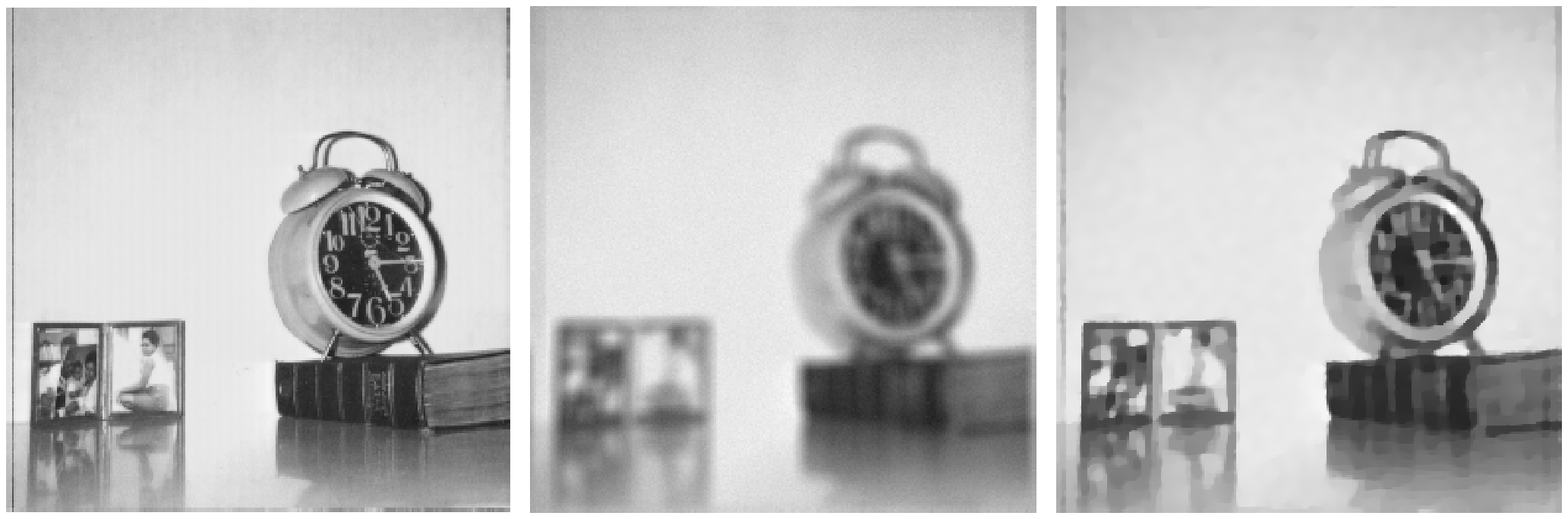}\vspace{-0.5cm}
\includegraphics[width=\textwidth, height=3.7cm]{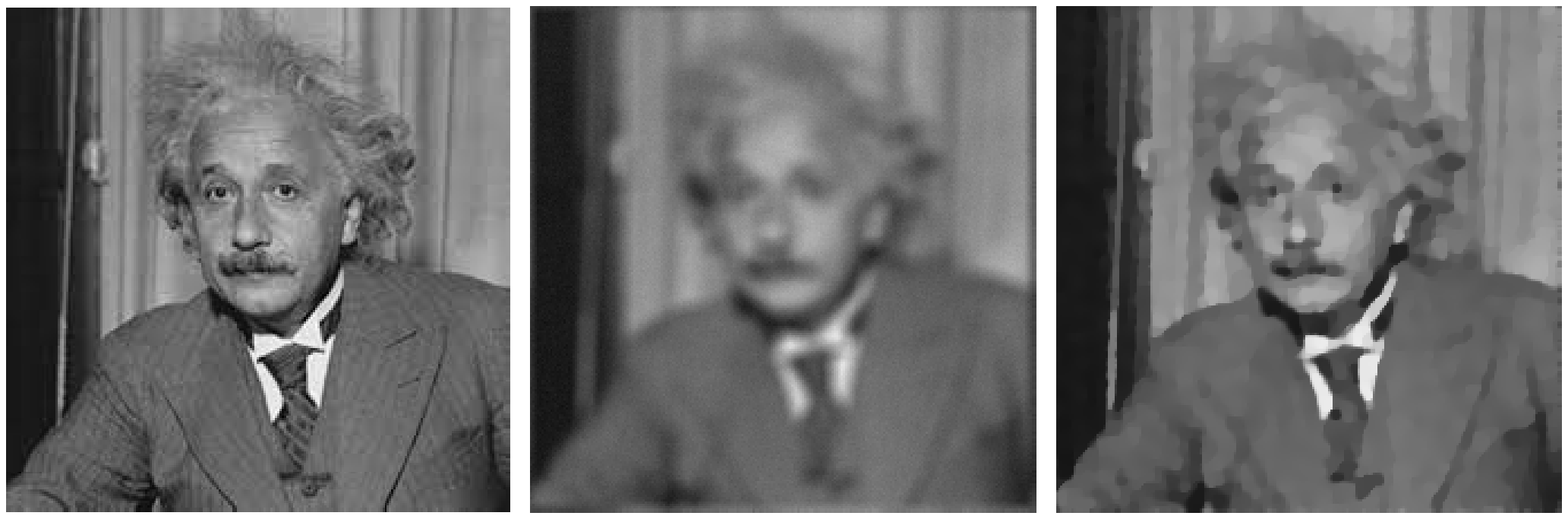}\vspace{-0.5cm}
\includegraphics[width=\textwidth, height=3.7cm]{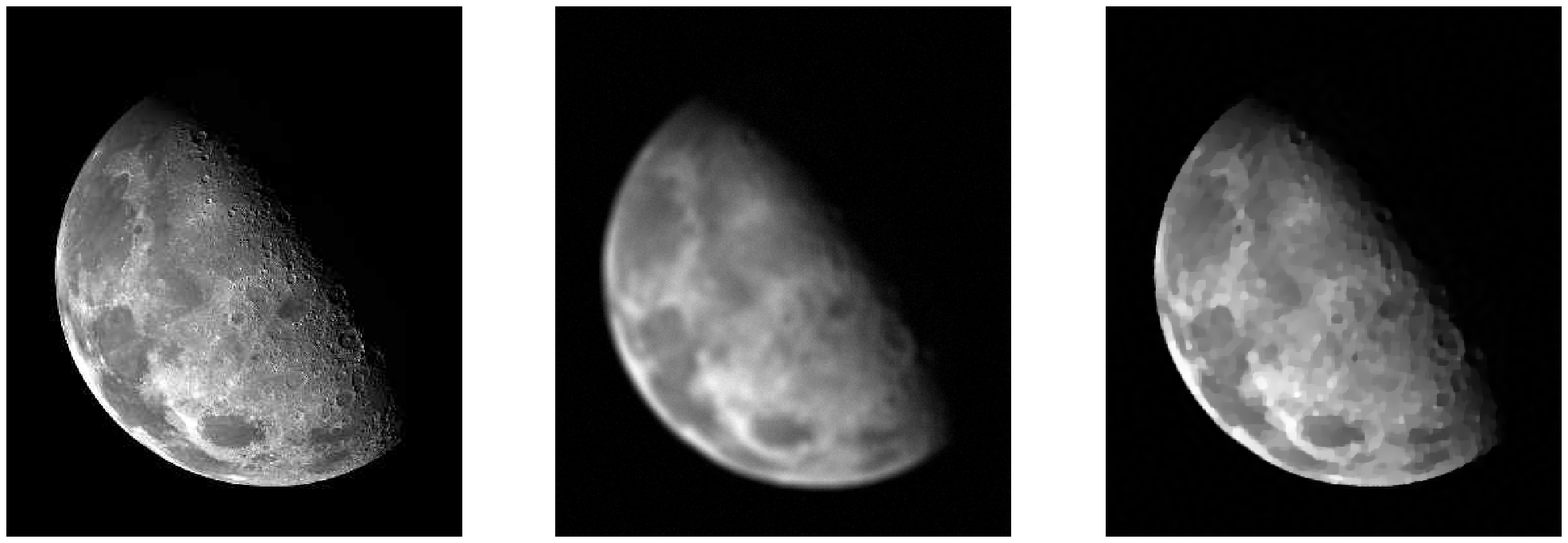}\vspace{-0.6cm}
\caption{Results on the images (top to bottom): ``Baboon", ``Barbara", ``Cameraman", ``Clock", ``Einstein"  and ``Moon". First column is the original images, the second is blurred and noisy images,  and the third is the restored images by Algorithm~\ref{alg:in:sy} with $(\tau,\theta)=(0.8,1.12)$.} \label{figCamer}
\end{figure}

\section{Final remarks}\label{conclusion}

We proposed   an  inexact  symmetric proximal  ADMM for solving  linearly constrained optimization problems. Under appropriate hypotheses,   the global  $\mathcal{O} (1/ \sqrt{k})$  pointwise  and $\mathcal{O} (1/ {k})$ ergodic  convergence rates of the proposed method were established   for a  domain of the acceleration parameters, 
 which is consistent  with the largest known one in the exact case.
Numerical experiments were carried out in order to illustrate the numerical behavior of the new method. They indicate that the proposed scheme represents an useful tool for solving   real-life applications. To the best of our knowledge, this was the first time that an  inexact variant of the  symmetric proximal ADMM was proposed and analyzed.


\def\cprime{$'$}

\end{document}